\newtheorem{theorem}{Theorem}[section]
\newtheorem{claim}[theorem]{Claim}
\newtheorem{fact}[theorem]{Fact}
\newtheorem{proposition}[theorem]{Proposition}
\newtheorem{lemma}[theorem]{Lemma}
\newtheorem{corollary}[theorem]{Corollary}
\newtheorem{question}[theorem]{Question}
\newtheorem*{question*}{Question}
\theoremstyle{definition}
\newtheorem{definition}[theorem]{Definition}
\newtheorem{question+}[theorem]{Question}
\newtheorem{defn}[theorem]{Definition}
 \newtheorem{example}[theorem]{Example}
\theoremstyle{remark}
\newtheorem{rmk}[theorem]{Remark}
\newtheorem{remark}[theorem]{Remark}
 \DeclareMathOperator{\intr}{int}
\newcommand{\WR}{\widetilde{\cal R}}
\newcommand{\la}{\langle}
\newcommand{\ra}{\rangle}
\newcommand{\sub}{\subseteq}
\newcommand{\fr}{\operatorname{fr}}
\newcommand{\Ima}{\ensuremath{\textup{Im}}}
\newcommand{\cal}[1]{\ensuremath{\mathcal{#1}}}
\newcommand{\Cal}[1]{\ensuremath{\mathcal{#1}}}
\newcommand{\res}{\ensuremath{\upharpoonright}}
\newcommand{\cl}[1]{\begin{overline}{#1}\end{overline}}
\newcommand{\es}{\ensuremath{\emptyset}}
\newcommand{\dom}{\ensuremath{\textup{dom}}}
\newcommand{\sm}{\setminus}
\newcommand{\Z}{\mathbb{Z}}
\newcommand{\N}{\mathbb{N}}
\newcommand{\Q}{\mathbb{Q}}
\newcommand{\R}{\mathbb{R}}
\title[Defining new linear functions]
{Defining new linear functions in tame expansions of the real ordered additive group}
\begin{document}

\author {Alex Savatovsky}

\address{Department of Mathematics and Statistics, University of Konstanz, Box 216, 78457 Konstanz, Germany}

\email{alex.savatovsky@uni-konstanz.de}

\subjclass[2010]{Primary 03C64,  Secondary 22B99}
\keywords{o-minimality, tame expansions, d-minimality, noiseless, semibounded}

\date{\today}
\begin{abstract} We explore \emph{semibounded} expansions of arbitrary ordered groups; namely, expansions that do not define a field on the whole universe. %We introduce the notion of a \emph{semibounded} expansion of an arbitrary ordered group, extending the usual notion from the o-minimal setting.
For  $\cal R=\la R, <, +, \dots\ra$, a semibounded o-minimal structure and $P\sub R$ a set satisfying certain tameness conditions, we discuss under which conditions $\la\cal R,P\ra$ defines total linear functions that are not definable in \cal R. Examples of such structures that does define new total linear functions include the cases when $\cal R$ is a reduct of $\la\R,<,+,\cdot_{\upharpoonright (0,1)^2},(x\mapsto \lambda x)_{\lambda\in I\subseteq \R}\ra$, and $P= 2^\Z$,  or $P$ is an iteration sequence (for any $I$)  or $P=\Z$, for $I=\Q$. %We prove further that  these examples satisfy a decomposition and dimension property, inspired from \cite{ES}.

%every definable smooth function $f: X\sub R^n\to R$ with $\cal R$-definable domain is $\cal R$-definable.
\end{abstract}
 \maketitle

\section{Introduction}
 This work, as the one of \cite{ES2} is at the intersection of two different direction in  model theory. The semibounded o-minimal structures that  were first  studied in the 90's by Marker, Peterzil, Pillay \cite{mpp, pet-reals, pss}  and others, they relate to %the classical model-theoretic dichotomies on existence of definable groups and fields, % 's program on identifying model-theoretically pure, group-theoretic or field-theoretic structures,
Zilber's dichotomy principle on definable groups and fields,
and have continued to develop in recent years (see \cite{ed-sbd, el-sbd, pet-sbd}).  The second direction is that of \emph{tame expansions} $\WR$ of o-minimal structures \cal R. That are expansions of \Cal R which are not o-minimal, yet are still considered tame for some geometric/analytic, measure theoretic, model theoretic or descriptive set theoretic  properties shared by the sets definable in these structures. This area is much richer,  originating to A. Robinson \cite{rob} and van den Dries \cite{VDD1, vdd-dense}, it has largely expanded in the last two decades by many authors, and  includes broad categories of structures, such as d-minimal/ noiseless/ o-minimal open core  expansions of o-minimal structures, or more generally expansions of the real line  that does not define the whole projective hierarchy (see for example \cite{Mil1}, \cite{HM}, \cite{FM}, \cite{FHW}). 
\par  The work of this paper is the direct continuation of the one of  \cite{ES2} where we were studying a generalisation of semibounededness to tame although non-o-minimal settings for expansions of the real vector space $\la\R,<,+,(x\mapsto \lambda x)_{\lambda \in \R}\ra$. In this paper we generalize the result of \cite{ES2} to expansions of the real additive ordered group. Moreover, we give some necessary conditions for an expansion of the form $\la\cal R,P\ra$, for \cal R a semibounded expansion of the real additive ordered group and $P\subseteq \R$  not to define any total linear functions that are not definable in \cal R.
 As an application,  we obtain that for any reduct of  semibounded expansions $\WR$ of an o-minimal expansion of the real additive ordered group $\cal R$, such as $\WR=\la \cal R, 2^\Z\ra$   with $\cal R=\la  R,<,+, \cdot_{\res [0,1]^2},(x\mapsto \lambda x)_{\lambda\in I\subseteq \R}\ra$ or $\la\cal R,\Z\ra$  with $\cal R=\la  R,<,+, \cdot_{\res [0,1]^2}\ra$, every definable smooth (that is, infinitely differentiable) function  are already definable in \cal R. Especially, none of these structures defines total linear functions that are not definable in \Cal R.
\medskip
\par We now collect some definitions and state our results. We assume familiarity with the basics of o-minimality, as they can be found, for example, in \cite{VDD1}. A standard reference for semibounded o-minimal structures is \cite{ed-sbd}.

%As mentioned earlier, an o-minimal structure $\cal R = \la R, <, +, \dots\ra$ is called \emph{semibounded} if there is no definable real closed field whose domain is the whole $R$. This definition turns out to be equivalent to a number of other statements We adopt it in a general setting.

%This definition has been chosen so that it is a priori Note Belegradek

%In \cite{es}, we started a program showing that under a decomposition and dimension property, $\WR$ does not define any new (that is, not \cal R-definable) smooth functions. We

\textbf{For the rest of this paper, and unless stated otherwise, we fix an o-minimal expansion $\cal R=\la \R, <, +, \dots\ra$  of the real ordered group, and an expansion
$\WR=\la \cal R, \dots\ra$ of $\cal R$. By `definable'  we mean definable in $\WR$ , possibly with parameters. By $P$ we denote a subset of $\R$ of dimension $0$.}
\smallskip \par We denote by $\la\cal R,P\ra^\#$ the espansion of $\la\cal R,P\ra$ by predicates for any subsets of $P^k$ for every $k\in \N$. If $\WR=\la\Cal R,P\ra$, we denote by $\WR^\#=\la\cal R,P\ra^\#$. 
If \cal R is a real closed field, we call a set definable in \cal R \emph{semialgebraic}. We denote by $\R_{vec}=\la\R,<,+,(x\mapsto \lambda x )\ra$, $\R_{res}=\la\R,<,+,\cdot_{\res (0,1)^2}\ra$ and $\R_{sb}=\la\R_{vec},\R_{res}\ra$. We say that an expansion of $\R_{gp}$ is {\em purely linear} if it has the form  $\la\R,<,+,\big((x\mapsto\lambda x)\upharpoonright (0,1)\big)_{\lambda \in I},\big(x\mapsto\lambda x\big)_{\lambda\in J}\ra$ for some $J\subseteq I\subseteq \R$.
Given $\cal M$, an expansion of the real ordered additive group, we denote by $\Lambda(\cal M)$ the field of total linear functions definable in \cal M.
\medskip

\par Following \cite{ES2}, we define a generalization of semiboundedness to non o-minimal settings.

\begin{defn}\label{def sb}
Let $\cal M = \la M, <, +, \dots\ra$ be an expansion of an ordered group. We call $\cal M$ \emph{semibounded} if there is no definable field whose domain is the whole $M$.
\end{defn}

\begin{rmk}In \cite{ES2} we asked the question wether or not this definition was equivalent to defining no  pole (that is a bijection between a bounded interval and $\R$). In the o-minimal setting the two properties are equivalent. However in our setting they  are not and we exhibit a counterexample in Section \ref{discussion}. 
\end{rmk} 
 For $\cal M$ a semibounded structure, we denote by $\Lambda(\cal M)$ the field of total linear functions definable in \cal M (possibly with parameters).
For any set $X\sub R^n$, we define its \emph{dimension} as the maximum $k$ such that some projection of $X$ to $k$ coordinates has non-empty interior.
\medskip
\par In \cite{ES2}, we introduced certain  tameness properties for expansions of the real additive ordered group ((DP) and (DIM) below). 
\begin{defn}
Let $Y\sub X\sub R^n$ be two sets. We say that $Y$ is a \emph{chunk of $X$} if it is a  cell definable in \Cal R, $\dim Y= \dim X$, and for every $y\in Y$, there is an open box $B\sub R^n$ containing $y$ such that $B\cap X\sub Y$. Equivalently, $Y$ is a relatively open cell in $X$  with $\dim Y=\dim X$ and definable in \Cal R.%\todo{in other words}%If there is such a $Y$, we say that $X$ \emph{isolates a semialgebraic chunk}.
\end{defn}

%For example, if $X= 2^\Z\cup\{0\}$, then every singleton contained in $X$  is a semialgebraic chunk, except for $\{0\}$.

\begin{defn} We say that $\WR$ has the \emph{decomposition property} \textup{(DP)} if for every definable set $X\sub \R^n$,
\begin{enumerate}
  \item[(I)] there is a family $\{Y_t\}_{t\in R^m}$ of subsets of $R^n$ that is  definable in \cal R, and a definable set $S\sub R^m$ with $\dim S=0$, such that $X=\bigcup_{t\in S} Y_t$,

  \item[(II)] $X$ contains a chunk.
\end{enumerate}
\end{defn}

\begin{defn}
We say that $\WR$ has the \emph{dimension property} (DIM) if for every  family $\{X_t\}_{t\in A}$ definable in \Cal R, and definable set $S\subseteq A$ so that  $\dim S=0$, we have
$$\dim \bigcup_{t\in S} X_t=\max_{t\in S} \dim X_t.$$\end{defn}
\begin{defn}
We say that $\WR$ is {\em noiseless} if, given a definable set $X$, then it has interior or it is nowhere dense. Equivalently if for $X\subseteq \R$, $X$ has interior or is nowhere dense (see \cite{Mil1}). 
\end{defn}
 \begin{remark}
 Note that Fornasiero calls noiseless structures i-minimal. It is also the denomination we have used in \cite{ES2}.
 \end{remark}
As we saw in \cite[Lemma 4.4, remark 4.5]{ES2}, (DIM) is equivalent to noiselessness. Morevover, in our setting noiseless implies (DPI).
\begin{fact}\label{DPI}(see \cite[main theorem]{S})Let $\WR=\la\cal R,P\ra$ being noiseless. Then $\WR^\#$ has (DPI). Moreover, given a definable set $X$, we can find a family  of cells $\{X_t:\,t\in A\}$ definable in \cal R and $S\subseteq A$, a set of dimension $0$ so that $X=\bigcup_{t\in S}X_t$ and for every projection $\pi$, for every  $t,t'\in S$ either $\pi(X_t)\cap \pi(X_{t'})=\es$ or they are equal.
 \end{fact}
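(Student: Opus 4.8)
Since this Fact is quoted from \cite{S}, the task here is to sketch how its proof runs, which I would do in two stages starting from the basic structure theory of $\WR^\#$ over $\cal R$. \emph{Stage 1: the small-cover normal form, which already gives (DPI).} I would first prove that every set $X\sub\R^n$ definable in $\WR^\#$ can be written $X=\bigcup_{\bar p\in Q}Y_{\bar p}$ for some family $\{Y_{\bar t}\}_{\bar t\in\R^k}$ definable in $\cal R$ and some $Q\sub P^k$, by induction on the complexity of a defining formula. Formulas of $\cal R$ and the $\#$-predicates on powers of $P$ are already of this shape; a finite union of small covers is again a small cover; and an existential quantifier sends a small cover $\bigcup_{\bar p\in Q}Y_{\bar p}\sub\R^{n+1}$ to $\bigcup_{\bar p\in Q}\pi(Y_{\bar p})$. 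The one substantial case is complementation — that $\R^n\sm\bigcup_{\bar p\in Q}Y_{\bar p}$ is again a small cover — which I would treat by an inner induction on the number $k$ of $P$-coordinates, using uniform cell decomposition of $\{Y_{\bar t}\}$ in $\cal R$ together with noiselessness of $\WR$: the $\cal R$-cells of dimension $<n$ contribute nowhere dense pieces controlled by their $\cal R$-definable frontiers, the full-dimensional strata are handled by passing to complements of closures, and the auxiliary $P$-coordinates that appear are absorbed by the fact that $\WR^\#$ names every subset of every $P^{k'}$. Granting the normal form, the first assertion of the Fact is immediate: take $S:=Q$ and $A:=\R^k$, so $X=\bigcup_{t\in S}Y_t$ with $\{Y_t\}_{t\in A}$ definable in $\cal R$, and $\dim S\le\dim P^k=0$ straight from the definition of dimension, a projection of $P^k$ to $j\ge 1$ coordinates being $P^j$, which has empty interior because $P$ does.

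\emph{Stage 2: cells and compatibility with projections.} From $X=\bigcup_{\bar p\in Q}Y_{\bar p}$, apply fibrewise cell decomposition in $\cal R$ to $\{Y_{\bar t}\}$ so that each $Y_{\bar t}$ is a finite disjoint union of cells $Y_{\bar t,1},\dots,Y_{\bar t,N}$ depending definably on $\bar t$; reindexing by $S:=Q\times\{1,\dots,N\}\sub P^k\times\{1,\dots,N\}$, still of dimension $0$, and by $A:=\R^k\times\{1,\dots,N\}$, presents $X$ as a union $\bigcup_{t\in S}X_t$ of $\cal R$-definable cells. To get the disjoint-or-equal alternative for every coordinate projection I would upgrade the normal form to a full cell decomposition relative to $P$: an induction on $n$ modelled on the o-minimal cell decomposition of \cite{VDD1}, producing for any finite list of $\WR^\#$-definable sets a partition of $\R^n$ into finitely many $\cal R$-definable families of cells — each indexed by a subset of a power of $P$ — that partitions each listed set and that each coordinate projection carries to a like decomposition of the image. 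For $n=1$ one uses that a $\WR^\#$-definable subset of $\R$ is, off a nowhere dense (hence, by Stage 1, small) set, a union of $\cal R$-definable intervals whose endpoints are $\cal R$-definable functions of the $P$-parameters; in the inductive step one fibres over $\R^{n-1}$, observes that the cell data of the fibres are governed by $\WR^\#$-definable functions whose graphs are small covers of $\cal R$-definable cells, folds these into the $\cal R$-definable families at the cost of a larger power of $P$, and closes using the inductive hypothesis in $\R^{n-1}$ and the $\#$-closure. Applying this to $X$, the classes lying in $X$ form the required $\{X_t\}_{t\in S}$: they partition $X$, so $X_t$ and $X_{t'}$ are disjoint for distinct $t,t'$, and the compatibility built in for every coordinate projection yields the stated alternative for each $\pi(X_t)$.

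\emph{Main obstacle.} The hard part is Stage 1's complementation step and the continuity-and-monotonicity input it feeds to Stage 2, in effect the principle that sets and functions definable in $\WR^\#$ are ``$\cal R$-definable modulo small sets''. This is exactly where noiselessness of $\WR$ is indispensable: it is what lets one tame the nowhere dense debris produced by complementation and keep the indexing sets literally inside powers of $P$, so that their dimension is visibly $0$; without it the inductive scheme collapses.
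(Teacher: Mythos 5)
The paper offers no proof of this statement: it is imported verbatim as a Fact from the external preprint \cite{S}, so there is no in-paper argument to compare your proposal against. That said, your two-stage sketch follows the standard route in this literature: first a normal form presenting every $\WR^\#$-definable set as $\bigcup_{\bar p\in Q}Y_{\bar p}$ with $\{Y_{\bar t}\}$ an $\cal R$-definable family and $Q\sub P^k$, proved by induction on formulas with complementation as the critical case, and then a uniform cell refinement. This is consistent with the machinery the present paper actually uses (Fact \ref{FM}, i.e.\ the Friedman--Miller covering of small sets by $\overline{f(P^m\times\{x\})}$, together with the observation that noiselessness implies sparseness), so the overall strategy is plausible.

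Two points in your sketch carry essentially all of the difficulty and are only gestured at. First, in the complementation step, the claim that the nowhere dense ``debris'' is controlled by $\cal R$-definable frontiers is precisely where sparseness must be used quantitatively; saying it is ``absorbed by the $\#$-predicates'' does not yet show that the index set of the complement's cover still lives inside a power of $P$. Second, and more seriously, the ``Moreover'' clause --- that for \emph{every} coordinate projection $\pi$ and all $t,t'\in S$ either $\pi(X_t)\cap\pi(X_{t'})=\es$ or $\pi(X_t)=\pi(X_{t'})$ --- is a coherence condition on an \emph{infinite} family indexed by a dimension-$0$ set. The finite refinement argument from o-minimal cell decomposition does not transfer verbatim: one must refine the $\cal R$-definable family $\{Y_{\bar t}\}$ uniformly in the parameter $\bar t$, iterate over the finitely many coordinate projections, and verify at each stage that the new index set is still (a definable subset of) a power of $P$, which is where the $\#$-closure is genuinely needed. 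Your sketch names the right ingredients but leaves this uniformity unaddressed, so as written it supports (DPI) itself more convincingly than the refined ``disjoint or equal'' conclusion.
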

Moreover, in our setting, we may further simplify the  assumptions.
\begin{defn}
 We say that $\WR$ has the {interior or isolated point property} if given any definable set $X\subseteq \R$, then $X$ has interior or has an isolated point.    
\end{defn}

\begin{proposition}
Let $\WR=\la\Cal R,P\ra$ has the interior or isolated point property. Then $\WR^\#$ is noiseless and has (DP). 
\end{proposition}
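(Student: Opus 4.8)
The plan is to prove the two assertions in turn; the substance lies in the passage from $\WR$ to $\WR^\#$. First, note that the interior-or-isolated property already forces noiselessness of $\WR$ and nowhere-density of $P$, by a closure computation valid in any structure: if a definable $X\sub\R$ is somewhere dense, pick an open interval $I\sub\intr(\overline X)$; then $X$ is dense in $I$, and for $p\in X\cap I$ and any open $N$ with $p\in N\sub I$ we have $N=\overline X\cap N\sub\overline{X\cap N}$, so $X\cap N\ne\{p\}$. Hence $X\cap I$ is definable with no isolated point and, being contained in $X$, with empty interior, so the hypothesis forces $X$ to have interior. Thus every definable subset of $\R$ has interior or is nowhere dense, i.e.\ $\WR$ is noiseless, and the same applied to $P$ gives that $P$ is nowhere dense. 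The very same computation will yield noiselessness of $\WR^\#$ once $\WR^\#$ is known to have the interior-or-isolated property.

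So the main point is to transfer the interior-or-isolated property from $\WR$ to $\WR^\#$. Since $\WR$ is noiseless, Fact~\ref{DPI} applies: a set $X$ definable in $\WR^\#$ decomposes as $X=\bigcup_{t\in S}X_t$ with $\{X_t\}_{t\in A}$ an $\cal R$-definable family of pairwise disjoint cells, $S\sub A$ with $\dim S=0$, and the projections of the $X_t$ $(t\in S)$ pairwise equal or disjoint. Let $X\sub\R$ be definable in $\WR^\#$ with empty interior. In dimension one the $X_t$ are points or open intervals; an interval would give $X$ interior, so all $X_t$ are singletons, and by disjointness $t\mapsto\min X_t=:h(t)$ is an injective $\cal R$-definable map with $X=h(S)$, $\dim S=0$. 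At this point I would invoke the structure theory underlying Fact~\ref{DPI} to take the dimension-zero index set $S$ inside a Cartesian power $P^\ell$ of $P$. Granting this, $X\sub h(P^\ell)$, which is $\WR$-definable of dimension $\le\dim P^\ell=0$, hence has empty interior, hence --- by noiselessness of $\WR$ --- is nowhere dense; so $X$ is nowhere dense. If moreover $X$ had no isolated point, then $\overline X$ would be a nonempty perfect nowhere-dense set, and (again by the reduction, applied to $\overline{h(P^\ell)}$) this would produce a nonempty perfect nowhere-dense set definable in $\WR$, contradicting its interior-or-isolated property --- provided the Cantor--Bendixson analysis of $\WR$-definable sets terminates, so that perfect kernels of such sets are $\WR$-definable. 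Hence $\WR^\#$ has the interior-or-isolated property, so by the first paragraph it is noiseless, equivalently it satisfies (DIM).

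For (DP): part (I) is exactly the decomposition $X=\bigcup_{t\in S}X_t$ of Fact~\ref{DPI}, with $\{X_t\}$ definable in $\cal R$ and $\dim S=0$. For part (II), given definable $X\sub\R^n$, use (DIM) for $\WR^\#$ to choose $t_0\in S$ with $\dim X_{t_0}=\dim X=:d$, and let $\pi$ project onto $d$ coordinates for which $U:=\pi(X_{t_0})$ is open. The projection compatibility forces every cell of dimension $<d$ to miss $\pi^{-1}(U)$, and over $U$ the cells that survive are pairwise disjoint graphs of continuous $\cal R$-definable maps $g_t\colon U\to\R^{n-d}$, $t\in S_1\sub S$, with $\dim S_1=0$; as $U$ is connected and the graphs never meet, these $g_t$ are consistently ordered over $U$. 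Each fibre $\{g_t(u):t\in S_1\}$ is a dimension-zero definable subset of $\R^{n-d}$, hence has an isolated point by the interior-or-isolated property of $\WR^\#$; shrinking $U$ to an open $U'$ on which some $g_{t_1}$ is uniformly separated from all the other $g_t$ --- which is possible by a Baire-category argument using noiselessness of $\WR^\#$ --- the graph of $g_{t_1}$ over $U'$ is a relatively open $\cal R$-definable $d$-cell contained in $X$, that is, a chunk of $X$.

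I expect the main obstacle to be the reduction in the second paragraph: replacing an arbitrary dimension-zero $\WR^\#$-definable index set by a subset of a Cartesian power of $P$, and, relatedly, transferring the absence of perfect nowhere-dense definable sets from $\WR$ to $\WR^\#$. This is precisely where the structure theory of $\langle\cal R,P\rangle^\#$ --- the circle of ideas around Fact~\ref{DPI} --- is indispensable; once it is available, the remaining steps use only ordinary $\cal R$-definable cell decomposition and the basic dimension theory together with the tameness of $\WR$ secured in the first paragraph. A secondary difficulty, in the chunk argument, is the possible accumulation of infinitely many cells, which the uniform-separation step is there to control.
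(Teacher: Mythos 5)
Your overall architecture matches the paper's: noiselessness of $\WR^\#$ comes from the observation that a somewhere-dense set without interior has no isolated point, combined with a reduction to the image of a power of $P$ under an $\cal R$-definable map (this is exactly Fact~\ref{FM}, which is what you should cite in place of ``the structure theory underlying Fact~\ref{DPI}''); (DPI) is quoted from Fact~\ref{DPI}; and the chunk is found by looking, over a full-dimensional projection of one cell, at the points that are isolated in their fibre. Two of your steps diverge from the paper, and that is where genuine gaps sit. The first is the Cantor--Bendixson detour. For noiselessness you only need the nowhere-density half, and you obtain that correctly from $X\sub h(P^\ell)$ (more precisely $X\sub\cl{f(P^\ell)}$, which is $\WR$-definable and nowhere dense by noiselessness of $\WR$). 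But your transfer of the full interior-or-isolated-point property to $\WR^\#$ rests on ``perfect kernels of $\WR$-definable sets are $\WR$-definable'', which you flag yourself and which is not available: the Cantor--Bendixson derivative is a transfinite construction, not a first-order one, and nothing in the hypotheses bounds its length. (The paper never transfers the property as such; it invokes it directly on the fibres $\pi^{-1}(x)\cap X$ in the chunk argument, so you are not worse off than the paper here --- but your proposed repair does not work as stated.)

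The second, more serious, gap is your uniform-separation step in (DPII). The paper applies Fact~\ref{DPI} a second time, to the $\WR^\#$-definable set $Y$ of fibre-isolated points (which has full dimension because every fibre contributes at least one such point), extracts from the resulting decomposition a compact $\cal R$-definable $W\sub Y$ of full dimension, and derives the positive lower bound on fibre-distances from compactness of $W$; the chunk is then $W\cap\pi^{-1}(\intr(\pi(W)))$. Your replacement --- a Baire-category argument over the sets on which some $g_{t_1}$ is $1/n$-separated from the other graphs --- does not go through as written: the index set $S_1$ is only assumed to have dimension $0$ and need not be countable (the standing hypotheses do not make $P$ countable), the sets in question are neither closed nor open, so Baire would give at best somewhere-density of one of them, and the family is $\WR^\#$-definable rather than $\cal R$-definable, so (DIM) cannot be invoked to upgrade somewhere-dense to ``has interior''. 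Replacing this step by the paper's device --- a second application of Fact~\ref{DPI} to $Y$ followed by the compactness argument --- is what closes the proof.
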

\begin{proof}
noiselessness is straightforward since a dense set without interior has no isolated point. (DPI) follows from Fact \ref{DPI}. 
\par For (DPII), let $X\subseteq \R^n$ be a set of dimension $m$. We do an induction on $n$. For $n=0$, the result is trivial. We assume that $n>1$ and that for any $m'\leq n'<n$, (DPII) holds. The case $m=n$ is straightforward since $X$ would have interior and that an open box is a chunk of $X$. We assume $m<n$. By a simple induction, we may further assume that $m=n-1$. Let $\pi$
 be the projection on the first $n-1$-coordinates and without loss of generality, we may assume that $\dim(\pi(X))=n$.
 By Fact \ref{DPI}, there is a family of cells $\{X_t:\, t\in A \}$  and $S\subseteq A$ a small set that satisfy the conditions of Fact \ref{DPI}. Let $t\in S$ so that $\dim(\pi(X_t))=n$ and let $Z=\pi(X_t)$. By assumption on $\{X_t:\; t\in S\}$, $\pi^{-1}(Z)\cap X$ is composed of disjoint graphs of functions with domain $Z$. Ie $\pi^{-1}(Z)\cap X=\bigcup_{t\in S'}X_{t}$ for some $S'\subseteq S$, for every $t\in S'$ $X_t$ is the graph of a continuous function with domain $Z$ and for every $t,t'\in S'$, 
 $X_t\cap X_{t'}=\es$. Since for every $x\in Z$ there is at least one isolated point in $\pi^{-1}(x)\cap X$, the set $Y=\{x\in \bigcup_{t\in S'}X_t:\; \text{$x$ is isolated in $\pi^{-1}(\pi(x))\cap X$}\}$ 
 has dimension $n$. By Fact \ref{DPI}, we may find a compact set $W\subseteq Y$, definable in \cal R and so that $\dim(W)=m$. \par We just have to show that $W\cap \pi^{-1}(\intr(\pi(W)))$ is a chunk of $X$. Since $W$ is compact, $\{\d(x,\pi^{-1}(\pi(x))\cap X):\, x\in W\}$ has an infimum that is not $0$. This  gives us the result.   
 \end{proof}
\begin{rmk}
Note that  (DPII) implies the interior or isolated point property. Therefore, in the case where \Cal R expands $\R_{gp}$, the conditions of \cite{es}, \cite{ES2} are equivalent to (DPII) alone.
\end{rmk}
Therefore we may rephrase the conditions of \cite{ES2} into $\WR$ has the interior or isolated point property. 
\bigskip
\par In Section \ref{proof main}, we generalize \cite[Theoreme 1.5]{es} to our setting.
\begin{theorem}\label{main1}
Let $\WR\subseteq \la\Cal R,P\ra^\#$ having the interior or isolated point property. Then every definable $\cal C^\infty$-function with a domain definable in \cal R is definable in $\la\Cal R,\Lambda(\WR)\ra$. 
\end{theorem}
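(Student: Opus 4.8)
The plan is to reduce to an open domain, use the decomposition machinery to describe $f$ locally by functions definable in $\Cal R$, and then use the rigidity of $\Cal C^\infty$-germs together with the cone structure of semibounded o-minimal structures to glue these local pieces, the gluing defect being precisely a linear map in $\Lambda(\WR)$.

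After replacing $X$ by a cell of an $\Cal R$-definable cell decomposition and using a coordinate projection (definable in $\Cal R$ together with its inverse) to identify it with an open subset of $\R^{\dim X}$, we may assume $X\subseteq\R^n$ is open and connected; fix such an $X$ and the graph $\Gamma\subseteq\R^{n+1}$ of $f$. Since $\WR$ has the interior or isolated point property, $\WR^\#$ is noiseless and has (DP) by the preceding Proposition, hence also satisfies (DIM) and the conclusion of Fact~\ref{DPI}; as $\Gamma$ is definable in $\WR\subseteq\WR^\#$, I would write $\Gamma=\bigcup_{t\in S}Y_t$ with $\{Y_t\}_{t\in A}$ a family of cells definable in $\Cal R$, $\dim S=0$, and, for each coordinate projection, the sets $\pi(Y_t)$ ($t\in S$) pairwise equal or disjoint. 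Writing $\pi$ for the projection to the first $n$ coordinates, each $Y_t$ ($t\in S$) is the graph of $f\res D_t$, $D_t:=\pi(Y_t)$, with $f\res D_t$ definable in $\Cal R$; calling $t$ \emph{large} when $\dim Y_t=n$, the set $U_t:=\intr(D_t)$ is then nonempty and $f\res U_t$ is a $\Cal C^\infty$-function definable in $\Cal R$. The next step is to check that $V:=\bigcup\{U_t : t \text{ large}\}$ is dense in $X$: if not, an open box $B\subseteq X\setminus\overline V$ would meet no $U_t$, forcing $\dim(D_t\cap B)<n$ for large $t$ and $\dim Y_t<n$ otherwise, so $\dim(Y_t\cap\pi^{-1}(B))<n$ for every $t\in S$, whence by (DIM) $\dim(\Gamma\cap\pi^{-1}(B))<n$ — impossible, since $\Gamma\cap\pi^{-1}(B)$ is the graph of $f\res B$ over the open box $B$.

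Next I would use that, being semibounded, $\Cal R$ is polynomially bounded, so that a function definable in $\Cal R$ which is $\Cal C^\infty$ near a point with vanishing Taylor expansion there vanishes on a neighbourhood; consequently two $\Cal R$-definable $\Cal C^\infty$-functions agreeing to infinite order at a common boundary point of their domains coincide near it. Since $f$ is $\Cal C^\infty$ on all of $X$ while $X\setminus V$ is nowhere dense, whenever two of the pieces $U_t,U_{t'}$ accumulate at a common point, $\Cal C^\infty$-ness of $f$ there forces the corresponding $\Cal R$-definable functions to agree to infinite order, hence to coincide near that point. I would then invoke the cone decomposition of semibounded o-minimal structures (\cite{ed-sbd}) — every definable set being, up to a finite partition, the image under a map that is $\Lambda(\Cal R)$-linear in the unbounded directions of a product of a bounded definable set with a linear cone — applied to the $\Cal R$-definable family $\{Y_t\}$, to conclude that the transition from one large piece of $f$ to another is, up to $\Cal R$-definable data, precomposition with a single linear map, and that this map, being recoverable from $f$ itself, is definable in $\WR$ and hence lies in $\Lambda(\WR)$. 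Assembling the pieces over the dimension-zero set $S$ would then exhibit $f$ as definable in $\la\Cal R,\Lambda(\WR)\ra$, proving Theorem~\ref{main1}.

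The hard part will be this last step: controlling, uniformly across the dimension-zero index set $S$, how the $\Cal R$-definable local descriptions of $f$ fit together, and showing that the only freedom in the gluing is precomposition with a linear map from $\Lambda(\WR)$. That is where all three ingredients are genuinely needed — the interior or isolated point property (through (DP) and Fact~\ref{DPI}) to produce the $\Cal R$-definable decomposition, polynomial boundedness of $\Cal R$ to make $\Cal C^\infty$-germs rigid, and the semibounded cone structure to force the transition maps to be linear — and it is the portion of the argument of \cite{es} that must be reworked, rather than quoted, to function over the mere ordered group.
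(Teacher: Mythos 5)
Your proposal takes a genuinely different route from the paper, and in doing so it reopens a gap that the paper deliberately avoids. The paper's own proof is a two-step reduction: it first quotes \cite{ES2}[Theorem 1.7] to conclude that $f$ is definable in $\R_{sb}$ (observing that the proof there goes through verbatim in the present setting), and then applies Proposition \ref{red sb} to the set $\Gamma(f)$: the cone decomposition of $\Gamma(f)$ produces finitely many vectors $v_{i,j}$ such that the \emph{total} linear maps $x\mapsto v_{i,j}x$ are definable in $\la\Cal R, f\ra\subseteq\WR$ (this uses the unboundedness of the cones in the $v_i$ directions), hence lie in $\Lambda(\WR)$, and such that $f$ is definable from $\Cal R$ together with these maps. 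You instead try to reprove the \cite{ES2} input from scratch, and the decisive step --- that the ``gluing defect'' is precomposition with a total linear function lying in $\Lambda(\WR)$ --- is exactly the step you leave as a gesture. Note that $\Lambda(\WR)$ is by definition the field of \emph{total} linear functions definable in $\WR$; the passage from ``restrictions of $x\mapsto\lambda x$ are definable'' to ``$x\mapsto\lambda x$ is definable'' is precisely the delicate point the paper isolates in Section \ref{sec lin} (Lemma \ref{Xt bd}, Propositions \ref{fn bd}, \ref{no lin}, \ref{restr d-min}), so it cannot be waved through.

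Two concrete problems with the intermediate steps as well. First, the germ-rigidity appeal is unjustified: $\Cal R$ expands only the ordered group, ``polynomially bounded'' is not a consequence of semiboundedness in the form you need, and Miller's theorem that a definable $\Cal C^\infty$ function flat at a point vanishes nearby is proved for polynomially bounded o-minimal expansions of the real \emph{field}. Second, even granting that adjacent $\Cal R$-definable pieces of $f$ have coinciding germs at common boundary points, the conclusion is only that $f$ is \emph{locally} $\Cal R$-definable, which does not imply definability in $\la\Cal R,\Lambda(\WR)\ra$ by local gluing alone: over $\R_{res}$ the function $x\mapsto\lambda x$ with $\lambda\notin\Q$ is $\R_{res}$-definable on every bounded interval, with matching germs at the seams, yet the total function is not definable in $\R_{res}$. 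Bridging from local to global definability is exactly where the structure theorem for semibounded sets must be applied to $\Gamma(f)$ as a whole, as in Proposition \ref{red sb}; without that, your argument does not close.
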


 Moreover, if \Cal R is purely linear, more can be said:
\begin{theorem} \label{main2} Let $\cal R=\la\R,<,+,\big( (x\mapsto \lambda x)_{\upharpoonright(0,1)}\big)_{\lambda \in I},
(x\mapsto \lambda x)_{\lambda \in J}\ra$ be purely linear.
Let $\WR\subseteq \la\cal R,P\ra^\#$ having the interior or isolated point property. Then every definable $\cal C^1$-function  with a domain definable in \cal R is definable in $\la\Cal R,\Lambda(\WR)\ra$. Moreover, $\Lambda (\WR)\subseteq I$. 
\end{theorem}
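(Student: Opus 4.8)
The plan is to derive the theorem from Theorem~\ref{main1}, which will dispose of the main assertion as soon as we have \emph{upgraded} the $\cal C^1$ hypothesis on $f$ to $\cal C^\infty$ by a rigidity argument special to the purely linear case. There is, on top of this, the extra clause $\Lambda(\WR)\sub I$, which I would prove first since it is the easier half. Let $\mu\in\Lambda(\WR)$, so that $x\mapsto\mu x$ is total and definable in $\WR$; its graph is a definable line $L\sub\R^2$. As $\WR\sub\la\cal R,P\ra^\#$ and the latter has \textup{(DP)}, $L$ contains a chunk $Y$, that is, an \cal R-definable cell of dimension $1$ lying inside $L$. Since $L$ is not vertical, $Y$ is the graph of $(x\mapsto\mu x)\res V$ over some open interval $V$, so this function is definable in \cal R; thus $\mu$ is the slope of an \cal R-definable linear function, whence $\mu\in I$. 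This gives $\Lambda(\WR)\sub I$.

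For the main assertion, let $f\colon U\to\R$ be definable and $\cal C^1$ with $U$ definable in \cal R. By a standard reduction and induction on the number of variables (partition $U$ into finitely many \cal R-definable cells, handle the open part directly and pull $f$ back through the \cal R-definable $\cal C^1$ parametrisations of the lower-dimensional cells), I may assume $U\sub\R^n$ is open. Fix $i\le n$. The partial derivative $\partial_i f\colon U\to\R$ is definable in $\WR$ and, because $f$ is $\cal C^1$, \emph{continuous} --- this is the only use of $\cal C^1$ in place of $\cal C^0$. Apply Fact~\ref{DPI} to the graph of $f$ (a definable set of dimension $n$) inside $\la\cal R,P\ra^\#$, and refine the resulting family to linear cells, which we may do since \cal R is purely linear: we obtain \cal R-definable cells $\{X_t\}_{t\in S}$ with $\dim S=0$ whose union is the graph of $f$. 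Writing $\pi$ for the projection onto the first $n$ coordinates, each $X_t$ with $\dim X_t=n$ is the graph of an \emph{affine} \cal R-definable function $g_t$ over the open cell $\pi(X_t)\sub U$, with $g_t=f\res\pi(X_t)$; hence $\partial_i f$ is constant on every such $\pi(X_t)$, and therefore locally constant on the open set $D:=\bigcup_{t:\dim X_t=n}\pi(X_t)$. On the other hand $U\sm D\sub\bigcup_{t:\dim X_t<n}\pi(X_t)$, which by \textup{(DIM)} has dimension $<n$ and is hence nowhere dense.

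Consider now the set $E_i:=\{x\in U:\partial_i f\text{ is not constant on any neighbourhood of }x\}$, which is definable in $\WR$. Since $E_i\sub U\sm D$, it is nowhere dense, and it has no isolated point: if $p$ were isolated in $E_i$, then $\partial_i f$ would be locally constant on a punctured neighbourhood $B^\ast$ of $p$; for $n\ge 2$ that $B^\ast$ is connected, so $\partial_i f$ is constant on $B^\ast$ and, by continuity, on the full neighbourhood of $p$, contradicting $p\in E_i$; for $n=1$ the two one-sided constant values of $\partial_i f$ near $p$ agree by continuity, giving the same contradiction. A definable nowhere dense set with no isolated point is empty, since $\la\cal R,P\ra^\#$ has the interior or isolated point property; so $E_i=\es$. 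Thus $\partial_i f$ is locally constant on $U$, and, being continuous, is constant on each connected component of $U$. Applying this to every $i$, $\nabla f$ is constant on each component; as $U$ is \cal R-definable it has only finitely many components, so $f$ is affine on each of them and in particular $f\in\cal C^\infty(U)$. Theorem~\ref{main1} now applies and yields that $f$ is definable in $\la\cal R,\Lambda(\WR)\ra$, which completes the proof.

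I expect the crux to be the step $E_i=\es$. This is exactly where the interior or isolated point property does work that noiselessness alone cannot --- it rules out ``devil's staircase'' behaviour of the derivative --- and it is the one place where both $\cal C^1$ (through continuity of $\partial_i f$) and ``purely linear'' (through the genuine affineness of the \cal R-definable chunks, which makes $\partial_i f$ \emph{locally constant} rather than merely locally semilinear) are indispensable. The remaining ingredients --- the reduction to open $U$, the dimension bookkeeping via \textup{(DIM)}, and the appeal to Theorem~\ref{main1} (which internally absorbs the passage from affine-on-components to definability in $\la\cal R,\Lambda(\WR)\ra$, including the totalisation of linear functions on unbounded components) --- are routine.
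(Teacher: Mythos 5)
Your overall strategy is close in spirit to the paper's: the paper also decomposes the graph of $f$ into a small family of \cal R-definable (hence affine) cells, observes that $f'$ is therefore determined on a large set by a dimension-$0$ set of slopes, and uses continuity of $f'$ to force a single slope. Your proof of the clause $\Lambda(\WR)\sub I$ via a chunk of the line $\Gamma(x\mapsto\mu x)$ is correct and arguably cleaner than the paper's (which extracts it as a byproduct of the slopes $a_t$ coming from \cal R-definable cells). For $n=1$ your argument for the main assertion is also sound.

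There is, however, a genuine gap in the step you yourself identify as the crux, namely $E_i=\es$ for $n\ge 2$. The assertion ``a definable nowhere dense set with no isolated point is empty'' is justified by the interior or isolated point property only for subsets of $\R$ --- that is how the property is defined --- and it is simply false for definable subsets of $\R^n$ with $n\ge 2$: a segment of a hyperplane is \cal R-definable, nowhere dense, and has no isolated points, and nothing you have said excludes that $E_i$ is such a set (your connectedness argument for ``no isolated point'' uses that a punctured ball is connected, which rules out codimension $\ge 2$ behaviour but not a locally separating hypersurface, across which the two locally constant values of $\partial_i f$ must still be matched by a separate continuity argument). The paper sidesteps this entirely by first reducing to $n=1$ via Fact~\ref{lin dim 1} ($f$ is linear iff all of its one-variable sections are), after which the domain of each section is an interval, $E_i\sub\R$, and your argument (or the paper's variant: $\Ima(f')$ is connected of dimension $0$, hence a point) closes immediately. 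Your reduction ``to open $U\sub\R^n$'' does not reduce the number of variables, so as written the proof does not go through for $n\ge2$; inserting the reduction to $n=1$ at the start repairs it and makes it essentially the paper's proof.
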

\smallskip
\par With these results, we may further study some examples.
\begin{definition}[{\cite{MT}}] Let $f:\R\to \R$ be a  bijection definable in \cal R, and $f^n$ the $n$-th compositional iterate of $f$. We say that \cal R is \emph{$f$-bounded} if for every  function definable in \Cal R $g : \R\to \R$, there is $n\in \N$ such that ultimately $f^n>g$.

Let $c\in\R$ and $f$ a  function definable in \cal R such that \cal R is $f$-bounded, and such that $(f^n(c))_n$ is growing and unbounded. We call $(f^n(c))_n$ an {\em iteration sequence}.
 \end{definition}
Before studying examples, we need to define two important class of structures.
\begin{definition}
We say that $\WR$ is d-minimal if given any definable family $\{X_t:\; t\in A\}$ of subsets of $\R$ of dimension $0$, there is $N$ so that for every $t\in A$, $X_t$ is the union of at most $N$ discrete sets. Note that if $\WR$ is d-minimal then it has the interior or isolated point property. Note also that, as shown in \cite{FM}, if $\WR$ is d-minimal then $\WR^\#$ is d-minimal too.
\par We say that $\WR$ is locally o-minimal if  any bounded definable set of dimension $0$ is finite. Note that local o-minimality implies d-minimality since a set of dimension $0$ needs to be discrete.
\end{definition} 
\medskip 
\begin{example} \label{ex}We consider the following structures:
\begin{enumerate}
 \item $\la \cal R,\Z\ra^\#$ with $\Lambda(\cal R)=\Q$,
  \item   $\la \cal R,P\ra^\#$, where $P$ is an iteration sequence for some o-minimal expansion $\cal R'$ of $\cal R$.
  \item $\la\cal R, \alpha^\Z\ra^\#$, for $\alpha>0$ 
  \item $\la\R_{vec},P \ra^\#$, where $P>0$ is the range of a decreasing sequence with limit $0$.
  %is closed in its convex hull, unbounded, has $0$ as a limit point and so that $\la \R_{vec},P^{>1}\ra$ is locally o-minimal.
  
%\item $\la \cal R, P\ra$, where $P$ is a fast sequence.
\end{enumerate}
\end{example}

\begin{rmk}\label{rmk alpha^N}
 An iteration sequence for \cal R has the form $(f^n(c))_n$ for $f:\R\to\R$ a  growing function definable in 
 \cal R such that $\cal R$ is $f$-bounded. Since \Cal R is semibounded, it is ultimately affine and thus we may assume that $f$  is of the form $x\mapsto ax+b$ (for $a>1\in \Lambda(\cal R)$, $b\in \R$). Therefore, for $c\in \R$, $f^n(c)=a^nc+\sum_{0\leq i<n}a^ib$. Then $$f^n(c)=a^nc+\frac{a^n-1}{a-1}b=\frac{1}{a-1}\big(a^n((a-1)c+b)-b\big).$$
 Thus $$\la\cal R, (x\mapsto \beta x),(f^n(c))_n\ra= \la \cal R, (x\mapsto \beta x),(a^n)_n\ra$$ for $\beta=(a-1)c+b$.
 \par Therefore, in Example \ref{ex}, the special case of ($2$)  where $P$ is an iteration sequence for $\Cal R$ is actually a reduct of ($3$). The other cases of ($2$) includes $P=\alpha^\N$ where $\alpha \notin \Lambda(\cal R)$ and any iteration sequences for $\cl{\R}$ or other non semibounded o-minimal structures. 
\end{rmk}
\medskip
\par In section \ref{d-min}, we prove the following proposition:
\begin{proposition}\label{ex d-min}The structures of  Example \ref{ex} are d-minimal. moreover, ($1$)-($2$) are locally o-minimal.
 \end{proposition}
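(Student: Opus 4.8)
The plan is to reduce everything to known results via two standard observations. By \cite{FM}, if $\la\cal R,P\ra$ is d-minimal then so is $\la\cal R,P\ra^\#$; and in cases (1)--(2), where $P$ is closed and discrete, the same passage preserves local o-minimality, since over a bounded interval only finitely many tuples from $P$ are relevant (this can be extracted from Fact \ref{DPI} exactly as \cite{FM} does for d-minimality). As d-minimality and ``every bounded definable set of dimension $0$ is finite'' are inherited by reducts, it then suffices, for (2)--(4), to exhibit $\la\cal R,P\ra$ as a reduct of a structure already known to be tame. Case (1) is different and will need a direct argument, so I treat it last.

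For (2), $P=(f^n(c))_n$ is an iteration sequence for an o-minimal expansion $\cal R'$ of $\cal R$; by the theorem of Miller--Tyne \cite{MT}, $\la\cal R',P\ra$ is d-minimal, and since $P$ is increasing and unbounded it is closed and discrete, hence $\la\cal R',P\ra$ is even locally o-minimal. Being a reduct of it, $\la\cal R,P\ra$ is locally o-minimal, and $\la\cal R,P\ra^\#$ is then d-minimal and locally o-minimal by the previous paragraph. For (3) we may assume $\alpha>1$ (if $\alpha=1$ then $\alpha^\Z=\{1\}$ and $\la\cal R,\{1\}\ra$ is o-minimal; replacing $\alpha$ by $\alpha^{-1}$ leaves $\alpha^\Z$ unchanged). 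Set $\cal R'=\la\cal R, x\mapsto\alpha x\ra$, which is again o-minimal and semibounded; by the eventual affineness of unary definable functions in semibounded structures (used already in Remark \ref{rmk alpha^N}), $\cal R'$ is $f$-bounded for $f(x)=\alpha x$, so $\alpha^\N=(f^n(1))_n$ is an iteration sequence for $\cal R'$. Together with the decreasing sequence $\alpha^{-\N}$ this gives $\alpha^\Z$, so $\la\cal R,\alpha^\Z\ra$ is a reduct of $\la\cal R',\alpha^\N,\alpha^{-\N}\ra$, which is d-minimal by \cite{MT} (the decreasing part $\alpha^{-\N}$ being covered by the analogous d-minimality statement for expansions by monotone convergent sequences, applied to $\cal R'$; cf. (4)); hence $\la\cal R,\alpha^\Z\ra^\#$ is d-minimal by \cite{FM}. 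It is not locally o-minimal, since $\alpha^{-\N}$ accumulates at $0$. For (4), $P$ is the range of a decreasing sequence with limit $0$, and $\la\R_{vec},P\ra$ is d-minimal by the corresponding result on expansions of $\R_{vec}$ by monotone convergent sequences (see \cite{ES2}, \cite{Mil1}, \cite{HM}); \cite{FM} then gives $\la\R_{vec},P\ra^\#$ d-minimal, and local o-minimality fails because $P$ is bounded and infinite.

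The main case is (1). Here $\la\cal R,\Z\ra$ is not of the previous form, because $\cal R$ --- being semibounded with $\Lambda(\cal R)=\Q$, which contains $2>1$ --- is not $(x\mapsto x+1)$-bounded, so $\Z$ is not an iteration sequence for $\cal R$. Instead I would prove directly that every definable (in $\la\cal R,\Z\ra^\#$) set of dimension $0$ is closed and discrete; since a closed discrete set has finite intersection with every bounded interval, this is precisely local o-minimality (hence d-minimality). One first checks this for $\la\cal R,\Z\ra$ itself --- equivalently, that it has the interior or isolated point property --- which, together with the proposition of the previous section, makes Fact \ref{DPI} available for $\la\cal R,\Z\ra^\#$; by Fact \ref{DPI} a definable dimension-$0$ set is $\bigcup_{t\in S}X_t$ with $\{X_t\}$ a family of finite sets definable in $\cal R$ and $S$ a definable set of dimension $0$ that may be taken inside a power of $\Z$, and one then induces on the defining data. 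The two inputs are: (i) the cone decomposition of semibounded o-minimal structures \cite{ed-sbd}, which forces an $\cal R$-definable function $\R^k\to\R$ to be, off a bounded region, piecewise affine with coefficients in $\Lambda(\cal R)=\Q$ --- and this is exactly where the hypothesis $\Lambda(\cal R)=\Q$ enters, since a $\Q$-affine map sends $\Z^k$ (indeed any closed discrete set) to a closed discrete set, whereas an irrational slope would produce a dense image; (ii) eventual affineness again, which over a bounded region makes each $X_t$ stabilize to a constant as the parameter tends to infinity, so only finitely many points survive there. I expect the genuinely delicate part to be the quantifier step --- verifying that $S$ may be taken inside a power of $\Z$ and that projections of the sets under consideration remain closed and discrete --- which is also what is needed to justify that the $\#$-expansion preserves local o-minimality in cases (1)--(2).
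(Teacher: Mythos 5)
Your reductions for (2)--(4) each rest on a claim that is false in the generality you invoke. For (2), you infer local o-minimality of $\la\cal R',P\ra$ from its d-minimality (Miller--Tyne) plus the fact that $P$ is closed and discrete; that inference is invalid. A d-minimal expansion by a closed discrete set need not be locally o-minimal: take $\cal R'$ to be the real field and $P=(2^{2^n})_n$, an iteration sequence for $x\mapsto x^2$. Then $\{p/q:\ p,q\in P\}$ is definable in $\la\cal R',P\ra$ and accumulates at $0$, so $\la\cal R',P\ra$ is d-minimal but not locally o-minimal, and your premise fails, so the reduct argument does not go through. The whole point is that one must work inside the \emph{semibounded} reduct: the paper traps any dimension-$0$ set definable in $\la\cal R,P\ra^\#$ inside $\overline{f(P^k)}$ for an $\cal R$-definable $f$ (Fact \ref{FM}), which by Fact \ref{str function} is affine up to a bounded error, identifies the induced structure on $P$ as $\la P,<\ra$ (Proposition \ref{Pind}), and then uses the s-cell analysis (Lemma \ref{s-c dim 1}) to show that an affine image of $P^k$ cannot accumulate. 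None of this is replaced by quoting Miller--Tyne. Similarly for (3) and (4): the ``corresponding result on expansions by monotone convergent sequences'' that you cite to \cite{ES2}, \cite{Mil1}, \cite{HM} is not available there in the needed form, and the naive general statement is false --- the paper's own Proposition \ref{1/N} shows that $\la\R_{sb},1/\N\ra$ defines traces of the projective hierarchy, so null sequences are only harmless over sufficiently linear reducts. The missing ingredient is Theorem \ref{T1} (if $P>0$ is discrete, closed in its convex hull, accumulating only at $0$, and $\la\cal R,P^{>1}\ra$ is locally o-minimal, then $\la\cal R,P\ra^\#$ is d-minimal), proved by induction on the number of coordinates using the affine form of $\cal R$-definable maps; this is what actually handles (3) and (4), and it is something you must prove, not quote.

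Your treatment of (1) is closer in spirit to the paper but stops exactly at the step you flag as delicate. The paper short-circuits it: local o-minimality of $\la\R,<,+,\Z\ra$ is quoted from Toffalori--Vozoris (as a reduct of the expansion by $\sin(2\pi x)$), and Lemma \ref{R1Z dmin} transfers local o-minimality from $\la\R,<,+,\Lambda(\cal R),P\ra$ to $\la\cal R,P\ra$ using sparseness of $P$, Fact \ref{FM} and Fact \ref{str function}. The passage to the $\#$-expansion is absorbed into Fact \ref{FM} itself, which already applies to $\WR^\#$, so there is no separate ``quantifier step'' about $S$ living in a power of $\Z$ to verify. I would adopt that route rather than trying to complete the induction you sketch.
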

\smallskip
\par In Section \ref{sec ex}, we prove the following Proposition:
\begin{proposition}\label{prop-examples}
Let $\WR$ be ($1$)-($3$) of  Example \ref{ex}. Then a definable smooth function over a semialgebraic domain is definable in \Cal R.
\end{proposition}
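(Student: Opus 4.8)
The plan is to combine Theorem~\ref{main1} (and Theorem~\ref{main2} in the purely linear case) with Proposition~\ref{ex d-min}. By Proposition~\ref{ex d-min}, each of the structures (1)--(3) of Example~\ref{ex} is d-minimal, hence has the interior or isolated point property; moreover, by the cited result of \cite{FM}, the $\#$-expansion is again d-minimal, so $\WR^\#$ also has the interior or isolated point property. Thus Theorem~\ref{main1} applies: every definable $\cal C^\infty$-function with domain definable in $\cal R$ is already definable in $\la\cal R,\Lambda(\WR)\ra$. Since a semialgebraic domain is in particular definable in $\cal R$, it remains only to show that $\Lambda(\WR)\subseteq\Lambda(\cal R)$, i.e. that these structures define no new total linear functions; then $\la\cal R,\Lambda(\WR)\ra=\cal R$ and we are done.

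So the real content is the identification of $\Lambda(\WR)$ in each case. For case (1), $P=\Z$: since the relevant $\cal R$ has $\Lambda(\cal R)=\Q$, and $\la\cal R,\Z\ra$ is purely linear over $\R_{res}$-type data, I would invoke Theorem~\ref{main2}, which gives $\Lambda(\WR)\subseteq I$; combined with the fact that $\Z$ is $\Lambda(\cal R)$-invariant (closed under the $\Q$-linear maps definable in $\cal R$, up to the usual scaling arguments), one concludes $\Lambda(\WR)=\Q=\Lambda(\cal R)$. For case (3), $P=\alpha^\Z$: here the key point is the multiplicative structure of $\alpha^\Z$; a new total linear function $x\mapsto\mu x$ with $\mu\notin\Lambda(\cal R)$ would, via the closure properties of $\la\cal R,P\ra^\#$, force $\mu\cdot\alpha^\Z$ to be definable and interact with $\alpha^\Z$ in a way that produces either a dense-codense definable subgroup behaviour contradicting d-minimality, or a definable field on all of $\R$, contradicting semiboundedness. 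Case (2) reduces to case (3) by Remark~\ref{rmk alpha^N} when $P$ is an iteration sequence for $\cal R$ itself (it is then a reduct of (3)); the remaining iteration-sequence cases are handled by the same multiplicative argument, noting $P=\beta^\N$ generates essentially the same expansion.

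Concretely, the steps in order: (i) cite Proposition~\ref{ex d-min} to get d-minimality, hence the interior-or-isolated-point property for $\WR$ and, via \cite{FM}, for $\WR^\#$; (ii) apply Theorem~\ref{main1} to reduce the statement to computing $\Lambda(\WR)$; (iii) in each of (1)--(3), show $\Lambda(\WR)=\Lambda(\cal R)$, using Theorem~\ref{main2} and the purely linear structure for the linear-sequence cases, and the multiplicative/semiboundedness argument for $\alpha^\Z$; (iv) conclude that $\la\cal R,\Lambda(\WR)\ra=\cal R$, so the smooth function over the semialgebraic domain is definable in $\cal R$.

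The main obstacle I expect is step (iii): ruling out new total linear functions genuinely uses the arithmetic of $P$ and is not formal. For $\alpha^\Z$ one must leverage that $\alpha^\Z$ is a multiplicative group and that $\la\cal R,\alpha^\Z\ra^\#$ cannot simultaneously see a new linear scaling and the exponential lattice without collapsing semiboundedness or d-minimality; making this precise — presumably by showing any new $\mu$ would let one define a pole or a field — is the delicate part. A secondary subtlety is checking that passing to $\WR^\#$ does not enlarge $\Lambda$, which should follow from the d-minimality of $\WR^\#$ and Theorem~\ref{main1} applied to $\WR^\#$ itself, but needs to be stated carefully.
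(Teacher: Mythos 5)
Your skeleton matches the paper's: reduce via Theorem~\ref{main1} (using d-minimality from Proposition~\ref{ex d-min} to get the interior or isolated point property) to showing $\Lambda(\WR)=\Lambda(\cal R)$, then conclude. But the step you yourself flag as ``the real content'' --- ruling out new total linear functions --- is exactly the step you do not carry out, and your sketches of it do not point at the mechanism the paper actually uses. For case (1) the paper's argument is that if $x\mapsto\lambda x$ were definable for $\lambda\notin\Q$, then $\Z-\lambda\Z$ would be a definable dense-codense set, contradicting noiselessness; your appeal to ``$\Z$ is $\Lambda(\cal R)$-invariant'' does not yield any contradiction from a hypothetical new linear map. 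For cases (2)--(3) the paper's engine is Proposition~\ref{no lin}: one first establishes that the expansion of the real \emph{field} $\la\cl{\R},P\ra$ is d-minimal (van den Dries for $\alpha^\Z$, Friedman--Miller \cite{FM2} for fast sequences), then combines Fact~\ref{FM} (so $\lambda x\in\cl{f(x,P^n)}$ for some $\cal R$-definable $f$), the semibounded structure theorem Fact~\ref{str function} (so $f(x,y)\in\mu_1x+h(y)+B$ with $\mu_1\in\Lambda(\cal R)$ and $B$ bounded), and Lemma~\ref{unbd} (the complementary gaps of $\cl{h(P^n)}$ are unbounded) to place $\lambda z$ in the middle of a gap it cannot occupy. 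Your proposed alternative --- that a new $\mu$ would produce ``dense-codense subgroup behaviour'' or ``a definable field on all of $\R$'' --- is not substantiated, and for $P=\alpha^\Z$ the set $\mu\alpha^\Z\cup\alpha^\Z$ is not dense-codense, so the case-(1) trick does not transfer; you acknowledge the gap but do not close it.

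A second, more localized error is in case (2): you claim the iteration sequences not covered by Remark~\ref{rmk alpha^N} still ``generate essentially the same expansion'' as some $\beta^\N$. That is false for iteration sequences of non-semibounded expansions $\cal R'$ of $\cal R$ (e.g.\ iterates of a function growing faster than every linear map), which need not be geometric at all. The paper handles these by proving a separate claim that such $P$ is fast for $\cal R$ (via the observation that a non-semibounded o-minimal structure is not linearly bounded) and then invoking \cite{FM2} to get d-minimality of $\la\cl{\R},P\ra$, after which Proposition~\ref{no lin} applies as before. Without that claim, or something replacing it, case (2) is not reduced to anything you have proved.
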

 Moreover, we discuss some conditions that would allow $\la\Cal R,P\ra^\#$  to define linear functions that are not definable in $\cal R$.
\par We restrict further the shape of \Cal R and establish the following proposition:
\begin{proposition}\label{prop-ex-lin}
Let $\cal R$ be purely linear. Let $\WR$ be any of the structures of Example \ref{ex}. Then every definable $\Cal C^1$ functions with a  domain definable in \Cal R is definable in \Cal R. 
\end{proposition}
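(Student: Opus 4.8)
The plan is to deduce the statement from three facts already in hand: the d-minimality of the structures of Example~\ref{ex} (Proposition~\ref{ex d-min}), Proposition~\ref{prop-examples} on definable smooth functions, and Theorem~\ref{main2}.

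First I would carry out the reduction. By Proposition~\ref{ex d-min} each structure $\WR$ listed in Example~\ref{ex} is d-minimal, hence has the interior or isolated point property. As $\cal R$ is purely linear, Theorem~\ref{main2} applies to $\WR$ and gives that every definable $\cal C^1$-function with domain definable in $\cal R$ is definable in $\la\cal R,\Lambda(\WR)\ra$ (and moreover $\Lambda(\WR)\sub I$). It therefore suffices to prove $\Lambda(\WR)=\Lambda(\cal R)$, since then $\la\cal R,\Lambda(\WR)\ra=\cal R$.

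Next I would establish $\Lambda(\WR)=\Lambda(\cal R)$. The inclusion $\Lambda(\cal R)\sub\Lambda(\WR)$ is trivial. Conversely, let $\lambda\in\Lambda(\WR)$, so that $x\mapsto\lambda x$ is a definable total linear function. In cases (1)--(3) of Example~\ref{ex} this is a definable $\cal C^\infty$-function with domain $\R$, which is semialgebraic, so Proposition~\ref{prop-examples} gives that $x\mapsto\lambda x$ is definable in $\cal R$, that is, $\lambda\in\Lambda(\cal R)$. In case (4) we have $\cal R=\R_{vec}$, hence $\Lambda(\cal R)=\R$ and the inclusion $\Lambda(\WR)\sub\R=\Lambda(\cal R)$ is automatic. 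In all four cases $\Lambda(\WR)=\Lambda(\cal R)$; together with the previous paragraph this completes the proof.

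I do not expect a genuine difficulty: once Proposition~\ref{prop-examples} and Theorem~\ref{main2} are available the argument is bookkeeping, the points to check being that a total linear function is $\cal C^\infty$ with semialgebraic domain (so that Proposition~\ref{prop-examples} bites) and that d-minimality (Proposition~\ref{ex d-min}) supplies the interior or isolated point property required by Theorem~\ref{main2}. The substance is absorbed into Proposition~\ref{prop-examples}; for what it is worth, a direct proof of $\Lambda(\WR)=\Lambda(\cal R)$ in cases (1)--(3) would be easy when $P=\Z$ --- a slope $\lambda\notin\Q$ would make $\Z+\lambda\Z$, the image of the definable set $P\times P$ under the definable map $(x,y)\mapsto x+\lambda y$, a definable $0$-dimensional dense subset of $\R$, contradicting noiselessness --- but the delicate case is $P=\alpha^\Z$, where the obvious dense witness, namely the subgroup generated by $\alpha^\Z\cup\lambda\alpha^\Z$, is an infinite union that is not visibly definable, so one must exploit the structure induced on $P$ by passing to the $\#$-expansion.
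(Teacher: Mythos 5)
Your reduction and your treatment of cases (1)--(3) coincide with the paper's: d-minimality from Proposition \ref{ex d-min} supplies the interior or isolated point property, Theorem \ref{main2} reduces the statement to the equality $\Lambda(\WR)=\Lambda(\cal R)$, and for (1)--(3) that equality is exactly what Proposition \ref{prop-examples} delivers when applied to the total linear function $x\mapsto\lambda x$, whose domain $\R$ is semialgebraic (the paper words this step as passing through the reduct relation with $\la\WR,\cdot_{\res (0,1)^2}\ra$, but the substance is the same).

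The divergence, and the one genuine problem, is case (4). You dispose of it by reading Example \ref{ex}(4) literally, with $\cal R=\R_{vec}$, so that $\Lambda(\cal R)=\R$ and $\Lambda(\WR)\sub\R$ is automatic. That is sound for that reading, but it is not what the paper proves: in Section \ref{sec ex} the proposition is restated with $\cal R$ an arbitrary linear reduct of $\R_{sb}$ in all four cases, and the paper's own proof of (4) ends by deriving a contradiction from the assumption that $g:x\mapsto \lambda x$ is definable in $\WR$ but not in $\cal R$ --- a situation that cannot occur when $\cal R=\R_{vec}$. The intended content of case (4) is therefore that for a proper purely linear reduct $\cal R$ (where $\Lambda(\cal R)$ may be a proper subfield of $\R$) and $P$ a bounded sequence decreasing to $0$, one still has $\Lambda(\la\cal R,P\ra^\#)=\Lambda(\cal R)$. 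Your proposal gives nothing here, and the equality does not follow from Proposition \ref{prop-examples}: its engine, Proposition \ref{no lin}, requires $\la\overline{\R},P\ra$ to be d-minimal, which fails for such $P$ (e.g.\ $\la\overline{\R},1/\N\ra$ is wild by Proposition \ref{1/N}). The paper instead exploits boundedness of $P$ directly: write $\Gamma(g)=\bigcup_{t\in S}X_t$ as in Fact \ref{DPI}, each $X_t$ the graph of a restriction of an affine map; the set $Y$ of endpoints of the domains $\pi(X_t)$ has dimension $0$, hence by Fact \ref{FM} lies in the closure of $f(P^k)$ for some $\cal R$-definable (piecewise affine) $f$, which is bounded because $P$ is; so some single $X_t$ has cofinal domain, forcing $g$ to agree with an $\cal R$-definable affine function on an unbounded interval and hence $\lambda\in\Lambda(\cal R)$. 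You should either restrict your claim in case (4) explicitly to $\cal R=\R_{vec}$ or supply an argument of this kind.
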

%We also establish the following theorem that could be of general interest:
%\begin{theorem}\label{d-m}
%We assume that $P$ is discrete, closed in its convex hull and has  only one accumulation point $0$. Moreover, we assume that $\la\cal R,P^{>1}\ra$ is locally o-minimal. Then $\WR$ is d-minimal
%\end{theorem}
\smallskip
\par In Section \ref{discussion}, we discuss some questions and give some examples of  semibounded d-minimal structure that are not locally o-minimal and so that for every algebraic set $X\subseteq \R^n$ not definable in \Cal R,  $\la\WR,X\ra$ defines any bounded set in the projective hierarchy. We also discuss the difference between being semibounded in the sens of Definition \ref{def sb} and defining no poles. We finish with some matural questions that arise from this work.
\smallskip

$ $\\
\noindent\emph{Structure of the paper.}
In Section \ref{sec-prel}, we fix some notations, and establish  basic properties for semibounded structures.
 In Section \ref{proof main}, we prove Theorems \ref{main1} and \ref{main2}.  In Section \ref{d-min}, we prove Proposition \ref{ex d-min}.  In Section \ref{sec ex}, we prove Propositions \ref{prop-examples},  \ref{prop-ex-lin}. In Section \ref{discussion}, we discuss some questions.

\medskip
\par \textbf{aknowledgement:} We would like to thank Philipp Hieronymi and Eric Walsberg for raising the questions that led to this work. We would also like to specially thank Pantelis Eleftheriou for asking questions, for his comments and his corrections.    

\section{Preliminaries}\label{sec-prel}

In this section, we fix some notation and prove basic facts about semibounded structures.

 If $A, B\sub \R$, we denote $\frac{A}{B}=\{a/b : a\in A, b\in B\}$. If $t\in \R$, we write $\frac{A}{t}$ for $\frac{A}{\{t\}}$.
By a $k$-cell, we mean a  cell of dimension $k$.  If $S\sub \R^n$ is a set, its closure is denoted by $\cl S$, with sole exception  $\overline \R$, which denotes the real field. By an open box $B\subseteq \R^n$, we mean a set of the form
$$B=(a_1, b_1)\times \ldots \times (a_n, b_n),$$
for some $a_i< b_i\in \R\cup\{\pm\infty\}$. By an open set we always mean a non-empty open set.  %Unless stated otherwise, $\pi: \R^{n}\to \R^{n-1}$ denotes the coordinate projection onto the first $n-1$ coordinates.

When we write equality between two structures, we mean the two structures have the same collection of definable sets.

Let $\cal M $ be an expansion of $\R_{gp}$. By $P_{ind}(\cal M)$, we mean the structure induced on $P$ in $\la\cal M,P\ra$ that is $$\la P,\{S\subseteq P^k:\,\text{$S$ is definable in $\la\cal M,P\ra$}\}.$$

%Following \cite{pet-sbd}, we say that an interval is \emph{short} if it is possible to define a  field structure on it.   We say that a set is \emph{short} if it is contained in a product of short intervals.

%\todo{I need to double check that the following definitions are meaningful in ANY semibounded structure over the reals, and not only reducts of the real field. This is the setting for Proposition \ref{sharp}.}
\begin{definition}
\par We say that $X\subseteq \R^n$ is a \emph{cone} if there are a bounded definable set $B\subseteq \R^n$, $v_1,\ldots,v_l\in \Lambda(\cal R)^n$ some linearly independent vectors over $R$ such that
$$X=B+\sum_{a\in (\R^{>0})^l}\sum_i a_iv_i:=\{x\in \R^n\,:\; \exists b\in B,\, a\in (\R^{>0})^l,\; x=b+\sum_ia_iv_i\}$$
Moreover,  for every $x\in X$, there are unique $b\in B$ and $a \in (\R^{>0})^l$ with $$x=b+\sum_ia_iv_i.$$
\end{definition}

\begin{fact}[Structure Theorem: decomposition into cones {\cite[Fact 1.6 (5)]{ed-sbd}}]\label{ed-sbd}
 Let $X\subseteq \R^n$ be a set definable in \Cal R. Then $X$ can be decomposed into finitely many cones.
\end{fact}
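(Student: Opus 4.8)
The statement is quoted verbatim from \cite{ed-sbd}, where it is obtained from the structure theory for semibounded o-minimal structures developed by Marker--Peterzil--Pillay, Peterzil, and Edmundo; so the plan is to recall the shape of that argument. I would argue by induction on the ambient dimension $n$. For $n=1$ there is nothing to prove: a definable subset of $\R$ is a finite union of points and intervals, and each of these is a cone (a point, or a bounded interval, with no cone directions; a ray $(a,+\infty)$ as $\{a\}+\R^{>0}\cdot 1$, and symmetrically for $(-\infty,a)$). For the inductive step I would first apply o-minimal cell decomposition to reduce to the case that $X\sub\R^n$ is a single cell, then let $\pi\colon\R^n\to\R^{n-1}$ be the projection to the first $n-1$ coordinates and apply the inductive hypothesis to $\pi(X)$; refining, we may assume $D:=\pi(X)$ is a single cone $D=B_D+\sum_{i=1}^l\R^{>0}v_i$ with $v_1,\dots,v_l\in\Lambda(\cal R)^{n-1}$ linearly independent over $\R$ and $B_D$ bounded and definable.

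The heart of the argument is the lemma that, after a further finite refinement, every continuous definable $f\colon D\to\R$ is affine along the cone directions: there are $\lambda_1,\dots,\lambda_l\in\Lambda(\cal R)$ and a definable $f_0$ on $B_D$ with bounded image such that $f(b+\sum_i a_iv_i)=f_0(b)+\sum_i a_i\lambda_i$ for all $b\in B_D$ and $a\in(\R^{>0})^l$. I would derive this from two consequences of semiboundedness: first, that no definable function maps a bounded set onto an unbounded one --- otherwise one produces a definable pole, contradicting that $\cal R$ defines no total field --- which forces any definable unary function to be ultimately affine with slope in the field $\Lambda(\cal R)$ (this one-variable fact is the genuine input, coming from the Peterzil--Edmundo trichotomy); and second, that such slopes, being definable functions of the remaining coordinates, are locally constant after passing to a $0$-dimensional, hence finite, refinement. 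Applying the one-variable fact slicewise and collecting the intercepts into $f_0$ (which has bounded image by the no-pole statement) yields the formula.

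Granting the lemma, reassembly is routine. If $X$ is the graph of such an $f$ over $D$, then $X=\{(b,f_0(b)):b\in B_D\}+\sum_{i=1}^l\R^{>0}(v_i,\lambda_i)$; the augmented vectors $(v_i,\lambda_i)\in\Lambda(\cal R)^n$ are still linearly independent over $\R$, the base is bounded, and uniqueness of the representation is inherited from that for $D$ --- so $X$ is a cone. If $X$ is a band $\{(b,t):b\in D,\ f(b)<t<g(b)\}$ with $f,g$ continuous and definable (allowing $f\equiv-\infty$ or $g\equiv+\infty$), I would split off a graph part and, in the half-infinite cases, adjoin the coordinate direction $\pm e_n$ to the generators, again expressing $X$ as a finite union of cones. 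Since cell decomposition yields finitely many cells and each produces boundedly many cones, the theorem follows. The step I expect to be the real obstacle is the lemma, and inside it the one-variable affineness statement: that is exactly where the hypothesis ``no definable total field'' is used, and it is the substantive piece of the Peterzil--Edmundo structure theory underlying the Fact.
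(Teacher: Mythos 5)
This statement is imported verbatim from Edmundo's structure theorem (\cite[Fact 1.6(5)]{ed-sbd}); the paper gives no proof of its own, so there is nothing internal to compare against --- the question is only whether your reconstruction matches the standard Peterzil--Edmundo argument, and in outline it does. You correctly identify the two genuine inputs: that in the o-minimal setting ``no global field'' is equivalent to ``no pole,'' and that this forces every definable unary function to be ultimately affine with slope in $\Lambda(\cal R)$. Two spots in your sketch carry more weight than you give them. First, the claim that the slopes become constant ``after passing to a $0$-dimensional, hence finite, refinement'' is circular as stated: the reason the definable slope function $b\mapsto\lambda(b)$ has finite image is not merely that it is definable, but that an infinite (hence, by o-minimality, interval-containing) definable set of realized slopes would make $(\lambda,x)\mapsto\lambda x$ definable on an interval times an unbounded set, producing a pole and hence a field --- so semiboundedness is used a second time here, not just o-minimality. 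Second, the band case is thinner than ``adjoin $\pm e_n$'': when $f$ and $g$ are both finite they may be affine with \emph{different} slope vectors $(\lambda_i^f)$ and $(\lambda_i^g)$ along the cone directions, and the band over $D=B_D+\sum_i\R^{>0}v_i$ must be decomposed using the mixed generators $(v_i,\lambda_i^f)$, $(v_i,\lambda_i^g)$ and $e_n$, with some care to preserve uniqueness of representation (e.g.\ $\{(x,t):0<t<x\}=\{(0,0)\}+\R^{>0}(1,0)+\R^{>0}(1,1)$); adjoining $\pm e_n$ alone only handles the half-infinite bands. Neither point is a wrong turn --- both are resolved exactly this way in \cite{ed-sbd} --- but they are where the substance of the proof lives beyond the one-variable affineness statement you single out.
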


\begin{fact}\label{str function}
Let $f:X\subseteq \R^n\rightarrow \R$ be a function definable in \Cal R. Then there is an interval $B\subseteq \R$ and an affine function $\lambda: \R^n\rightarrow \R$, $x\mapsto \sum_i\lambda_ix_i+b$ where $\lambda_i\in \Lambda(\cal R)$ such that for every $x\in X$, $f(x)\in \lambda(x)+B$.
\end{fact}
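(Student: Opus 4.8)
The plan is to pass to the graph $\Gamma=\{(x,f(x)):x\in X\}\subseteq\R^{n+1}$ and feed it to the cone decomposition of Fact~\ref{ed-sbd}. Let $\pi\colon\R^{n+1}\to\R^n$ be the projection onto the first $n$ coordinates and put $e=(0,\dots,0,1)\in\R^{n+1}$. Since $\Gamma$ is definable in $\cal R$, Fact~\ref{ed-sbd} presents it as a finite union of cones; because $X=\bigcup\pi(C)$ over these cones $C$ and each restriction of $f$ to a set $\pi(C)$ is again definable in $\cal R$, it suffices to prove the statement when $\Gamma$ is a single cone $C=B_0+\sum_{i=1}^{l}\R^{>0}v_i$, with $B_0\subseteq\R^{n+1}$ bounded and definable in $\cal R$ and $v_1,\dots,v_l\in\Lambda(\cal R)^{n+1}$ linearly independent over $\R$; one then combines the finitely many resulting pieces, enlarging the interval. (If $l=0$ or $B_0=\emptyset$ the statement is immediate, so assume $l\ge 1$ and $C\neq\emptyset$.)

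Write $W=\mathrm{span}_{\R}\{v_1,\dots,v_l\}$. The heart of the proof is the claim $e\notin W$. Indeed, if $e=\sum_i c_i v_i\in W$, then for $b\in B_0$ and $w=\sum_i v_i$ we have $b+w\pm\varepsilon e=b+\sum_i(1\pm\varepsilon c_i)v_i\in C$ for all small $\varepsilon>0$, and these two points share the same $\pi$-image but have different last coordinates, contradicting that $C=\Gamma$ is a graph. Hence $W\cap\R e=\{0\}$, so $\pi|_W$ is injective and there is a unique $\R$-linear map $\ell\colon\pi(W)\to\R$ with $w_{n+1}=\ell(\pi(w))$ for all $w\in W$. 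Moreover $\ell$ has coefficients in $\Lambda(\cal R)$: pick $l$ coordinates $i_1<\dots<i_l\le n$ on which $v_1,\dots,v_l$ are still independent, so that the corresponding $l\times l$ minor $M$ of $[v_1\,\cdots\,v_l]$ is invertible; its entries lie in $\Lambda(\cal R)$, hence so do those of $M^{-1}$, and $w_{n+1}$ is the $\Lambda(\cal R)$-linear combination of $w_{i_1},\dots,w_{i_l}$ read off from $M^{-1}$ and the last row of $[v_1\,\cdots\,v_l]$. Extend $\ell$ to an $\R$-linear $\lambda\colon\R^n\to\R$ with coefficients in $\Lambda(\cal R)$ (it may depend only on the coordinates $i_1,\dots,i_l$).

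Finally, for $x\in\pi(C)=X$ let $b+w$ be the unique point of $C=\Gamma$ above $x$, with $b\in B_0$ and $w\in\sum_i\R^{>0}v_i\subseteq W$. Then $x=\pi(b)+\pi(w)$ and $f(x)=b_{n+1}+w_{n+1}=b_{n+1}+\ell(\pi(w))$, whence $f(x)-\lambda(x)=b_{n+1}+\ell(\pi(w))-\lambda(\pi(b))-\lambda(\pi(w))=b_{n+1}-\lambda(\pi(b))$, since $\lambda$ agrees with $\ell$ on $\pi(W)$. As $b$ ranges over the bounded set $B_0$ the value $b_{n+1}-\lambda(\pi(b))$ stays in a bounded subset of $\R$; letting $B$ be a bounded interval containing it — and, after recombining the finitely many cones, containing all the analogous sets and constant terms — gives $f(x)\in\lambda(x)+B$ for all $x\in X$. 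I expect the only genuine obstacle to be the middle paragraph: the identity $e\notin W$ (which is precisely where ``$\Gamma$ is a graph'' is used) and the verification that the extracted linear functional stays within the field $\Lambda(\cal R)$; the cone decomposition is quoted and the concluding cancellation is bookkeeping.
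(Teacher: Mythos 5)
Your single-cone argument is correct and is evidently the content behind the paper's one-line proof (``Easy to see, using \cite[Fact 1.6]{ed-sbd}''): the key point that the vertical direction $e$ cannot lie in the span of the cone generators because $\Gamma$ is a graph, the extraction of a $\Lambda(\cal R)$-linear functional from an invertible minor with entries in the field $\Lambda(\cal R)$, and the cancellation $f(x)-\lambda(x)=b_{n+1}-\lambda(\pi(b))$ are all fine, and you correctly note that the graph of $f\upharpoonright\pi(C)$ is exactly $C$ because $\Gamma$ is a graph.

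The gap is in your first and last sentences: the reduction to one cone and the recombination by ``enlarging the interval''. Distinct cones of $\Gamma(f)$ can produce distinct linear parts $\ell^{(j)}$, and then no single affine $\lambda$ together with a \emph{bounded} $B$ can work, since $\ell^{(j)}-\ell^{(k)}$ is unbounded on $X$; enlarging $B$ does not help. Concretely, $f(x)=|x|$ is definable in $\la\R,<,+\ra$, its graph decomposes into the cones $\R^{>0}(1,1)$, $\R^{>0}(-1,1)$ and the origin, and $|x|-\lambda_1x$ is unbounded for every choice of $\lambda_1$. So Fact \ref{str function} as printed is true only if $B$ is allowed to be unbounded, in which case it is vacuous --- yet the applications in Lemma \ref{R1Z dmin} and Proposition \ref{no lin} explicitly invoke it with a bounded interval, respectively a box of finite radius. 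What your argument actually proves, and what those applications need, is the corrected statement: there are finitely many affine functions $\lambda^{(1)},\dots,\lambda^{(k)}$ with linear coefficients in $\Lambda(\cal R)$ and a bounded interval $B$ such that for every $x\in X$ one has $f(x)\in\bigcup_j\big(\lambda^{(j)}(x)+B\big)$ (equivalently, a finite partition of $X$ on each piece of which a single $\lambda^{(j)}$ works). You should state and prove that version rather than assert that the pieces recombine into one affine function.
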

\begin{proof}
Easy to see, using \cite[Fact 1.6]{ed-sbd}.
\end{proof}
\begin{proposition}\label{red sb}
Let $\cal R$ be a reduct of $\R_{sb}$. Then \cal R has the form $\la \cal R',(x\mapsto \lambda x)_{\lambda\in I}\ra$, for some semibounded o-minimal expansion of $\R_{gp}$ so that  $\Lambda(\Cal R')=\Q$. 
\end{proposition}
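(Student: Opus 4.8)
The plan is to set $I:=\Lambda(\cal R)$ and to let $\cal R'$ be the expansion of $\R_{gp}$ by predicates for all bounded sets definable in $\cal R$, and then to check that $\cal R'$ has the required properties. First note that o-minimality and semiboundedness (Definition \ref{def sb}) pass to reducts, so $\cal R$, being a reduct of $\R_{sb}$, is itself a semibounded o-minimal expansion of $\R_{gp}$; hence so is $\cal R'$, which is a reduct of $\cal R$ by construction (its primitives $<$, $+$ and the chosen bounded predicates are all $\cal R$-definable). In particular $\Q\sub I\sub\R$ and $I$ is a subfield of $\R$, the total linear functions definable in a group expansion being closed under sum, composition and inverse.

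Next I would establish $\cal R=\la\cal R',(x\mapsto\lambda x)_{\lambda\in I}\ra$. The inclusion $\supseteq$ is immediate: $\cal R'$ is a reduct of $\cal R$, and each $x\mapsto\lambda x$ with $\lambda\in I=\Lambda(\cal R)$ is $\cal R$-definable. For the converse, apply the Structure Theorem (Fact \ref{ed-sbd}) to $\cal R$: every definable $X\sub\R^n$ is a finite union of cones $B+\sum_i a_iv_i$, and by the very definition of a cone each $B$ is bounded and definable in $\cal R$ (hence a basic relation of $\cal R'$) while each $v_i$ lies in $\Lambda(\cal R)^n=I^n$. Such a cone is the image of $B\times(\R^{>0})^l$ under the map $(b,a)\mapsto b+\sum_i a_iv_i$, each of whose output coordinates is an $I$-linear combination of the inputs, hence a map definable in $\la\cal R',(x\mapsto\lambda x)_{\lambda\in I}\ra$; so the cone, and therefore $X$, is definable there.

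It remains to show $\Lambda(\cal R')=\Q$, and this is the crux. Every bounded set definable in $\cal R$ is in particular a bounded set definable in $\R_{sb}$; since $\R_{sb}$ is a reduct of the real field and every cone with $l\geq 1$ is unbounded (as $v_1\neq 0$), the cone decomposition of $\R_{sb}$ shows such a set is a finite union of bounded semialgebraic sets. Moreover every bounded semialgebraic set is definable in $\R_{res}$: one scales it into $(0,1)^n$ by an affine map of bounded domain (which $\R_{res}$ defines) and reads off the polynomial conditions using $\cdot_{\res(0,1)^2}$. Hence $\cal R'$ is a reduct of $\R_{res}$, so $\Lambda(\cal R')\sub\Lambda(\R_{res})$, and it is well known (and is part of the basic structure theory of semibounded o-minimal structures, see \cite{ed-sbd}) that $\Lambda(\R_{res})=\Q$: by the Structure Theorem every $\R_{res}$-definable set is a finite union of cones with bounded semialgebraic pieces and direction vectors in $\Lambda(\R_{res})^n$, and the graph of an irrational scalar multiplication, being an unbounded $1$-dimensional set, would have to contain a cone whose direction vector is $\Q$-proportional to $(1,\lambda)$, which is impossible. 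I expect this last verification — pinning down $\Lambda(\R_{res})=\Q$, equivalently that adjoining bounded sets to $\R_{gp}$ introduces no new total linear function, which ultimately rests on the fact that the class of such ``$\Q$-semibounded'' sets is already closed under Boolean operations and coordinate projections — to be the only genuine obstacle; everything else is bookkeeping with the cone decomposition.
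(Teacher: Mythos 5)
Your construction of $\cal R'$ (the expansion of $\R_{gp}$ by all bounded $\cal R$-definable sets) together with $I=\Lambda(\cal R)$ matches the intended witnesses, and your bookkeeping is correct; but at the crux your route diverges from the paper's. You obtain the inclusion $\cal R\sub\la\cal R',(x\mapsto\lambda x)_{\lambda\in I}\ra$ by quoting Fact \ref{ed-sbd} in its strong form, i.e.\ with the direction vectors of the cones already lying in $\Lambda(\cal R)^n$ for the \emph{reduct} $\cal R$. The paper deliberately does not assume this: its proof starts from a presentation $\Gamma(f)=C+\sum_{a\in(\R^{>0})^n}\sum_i a_iv_i$ with $v_i\in\R^{n+m}$ arbitrary, and then \emph{proves} that each $x\mapsto v_{i,j}x$ is definable in $\cal R$, by induction on the number $n$ of direction vectors: the ``face at infinity'' $Y=\sum_{a\in(\R^{>0})^{n}}\sum_i a_iv_i$ sits inside $\fr(\Gamma(f))$, hence is itself $\cal R$-definable, and peeling off faces one eventually isolates the single ray $\R^{>0}v_1$, whose definability forces $x\mapsto v_{1,1}x$ into $\Lambda(\cal R)$. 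So the entire mathematical content of the proposition is, in your write-up, delegated to the citation. This is defensible given how the paper itself states the Fact (its definition of cone already requires $v_i\in\Lambda(\cal R)^n$), but you should either justify that the structure theorem of \cite{ed-sbd} really yields direction vectors in the field of \emph{total} linear maps definable in $\cal R$ (and not merely in some larger field of germs or of partial linear maps, which for structures like $\R_{res}$ is all of $\R$), or else reproduce the frontier induction. On the other hand, your last paragraph supplies something the paper's proof omits entirely: a verification that $\Lambda(\cal R')=\Q$, via the observation that bounded $\cal R$-definable sets are bounded semialgebraic, hence $\R_{res}$-definable, so that $\cal R'$ is a reduct of $\R_{res}$ and $\Lambda(\cal R')\sub\Lambda(\R_{res})=\Q$; that reduction is sound (and $\Lambda(\R_{res})=\Q$ is classical, cf.\ \cite{pss, pet-reals}, though your one-line derivation of it from the cone decomposition is circular as written and should simply be cited). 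On balance: a legitimate shortcut at the main step, plus a genuine gain in completeness at the end.
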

\begin{proof}
First, as a reduct of $\R_{sb}$, $\Cal R$ is o-minimal and semibounded.  Let $X$ be a set definable in \Cal R. By a simple induction on the structure of the cell, we may assume that $X$ is the graph of a function $f$. By Fact \ref{ed-sbd} we may assume that there is $C$ a bounded set definable in $\cal R$ and $v_1,\ldots,v_n\in \R^{n+m}$ some linearly independent vectors so that $$\Gamma(f)=C+\sum_{a\in (\R^{>0})^n}\sum_ia_iv_i.$$
It is clear that $f$ is definable in $\la\Cal R,(x\mapsto v_{i,j}x)_{i\leq n,j\leq n+m}\ra$. 
What remains to show is that $\WR$ defines $(x\mapsto v_{i,j}x)$ for every $i,j$.
 By permuting coordinates, we just have to show that $x\mapsto v_{1,1}x$ is definable in $\WR$. Moreover, without loss of generality, we may assume that $$\Gamma(f)=\sum_{a\in (\R^{>0})^n}\sum_ia_iv_i.$$
\par We proceed by induction on $n$. For $n=1$, we may assume $f:\R^{>0}\to \R^m$, $x\mapsto (xv_{1,j})_j$ and $$ v_{1,1}x=\pi_1(f(x)).$$
\par We assume the result to holds for $n$ and prove it for $n+1$. Let $Y=\sum_{a\in (\R^{>0})^{n}}\sum a_iv_i$. Observe that $Y\subseteq \fr(\Gamma(f))$, $Y$ is  definable in $\WR$ and $\dim(Y)=n$. We apply the induction hypothesis to $Y$ to get the result. 
\end{proof}
We finish this section by exposing a central result by Friedmann and Miller:

\begin{definition}[\cite{FM}]
We say that a set $Q\subseteq \R$ is \emph{sparse} if for every  function definable in \cal R $f: \R^k\rightarrow \R$,  $\dim f(Q^k)=0$.
\end{definition}
\begin{rmk}
%Let $P\subseteq \R$.
 If $\la\cal R,P\ra$ is i-minimal, then $P$ is sparse.
\end{rmk}

\begin{fact}[{\cite[Last claim in the proof of Theorem A]{FM}}]\label{FM} Assume $P\sub \R$ is sparse.
Let $A\subseteq \R^{n+1}$ be definable in $\WR^\#$  such that for every $x\in \R^n$, $A_x$ has no interior. Then there is a function definable in \cal R $f:\R^{m+n}\rightarrow \R$ such that for every $x\in \R^n$, $$A_x\subseteq \cl{f(P^m\times\{x\})}.$$
\end{fact}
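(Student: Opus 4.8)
\textbf{Proof proposal for Fact~\ref{FM} (sketch of the approach I would take).}

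The plan is to reduce to the dimension-$0$ case and then bootstrap through $P^\#$. First I would recall that, since $P$ is sparse, $P$ has dimension $0$, and by Fact~\ref{DPI} (applied to $\WR^\#$, which is noiseless because $\la\Cal R,P\ra$ is i-minimal) every set definable in $\WR^\#$ can be written as $\bigcup_{s\in S}Z_s$ for a family $\{Z_s\}$ of cells definable in $\Cal R$ and a set $S$ of dimension $0$; the point is that $S$ itself will be coordinatized by tuples from $P$ up to an $\Cal R$-definable map, via the structure induced on $P$. So the first step is to make precise the claim that every parameter needed to pick out a piece of a $\WR^\#$-definable set lies in the image of some power $P^k$ under a map definable in $\Cal R$ — this is exactly what $P_{ind}$-quantifier elimination together with Fact~\ref{DPI} buys us, and it is the mechanism behind the appearance of $P^m\times\{x\}$ in the conclusion.

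Next I would fix $A\subseteq\R^{n+1}$ definable in $\WR^\#$ with all fibers $A_x$ interior-free. By noiselessness each $A_x$ is nowhere dense, hence $\cl{A_x}$ is nowhere dense with empty interior; so after replacing $A$ by $\cl A$ (which is still $\WR^\#$-definable and has fibers with empty interior, using noiselessness fiberwise) I may assume $A$ has closed fibers. Now apply Fact~\ref{DPI}/Fact~\ref{DPI}-type decomposition to $A$ regarded as a family over $x\in\R^n$: write $A=\bigcup_{s\in S}Z_s$ with $\{Z_s\}_{s\in T}$ an $\Cal R$-definable family of cells and $S\subseteq T$ of dimension $0$. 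The fibers $(Z_s)_x$ that are full-dimensional in $\R$ (i.e.\ have interior) cannot occur, because the union of those over $s\in S_x$ would, by (DIM), give $A_x$ interior; so for each $x$ and each relevant $s$, $(Z_s)_x$ is a point or lower-dimensional, and $A_x$ is a dimension-$0$ union of such cells. Then the content is: the finitely-many-parameters $s$ cutting out $A_x$, as $x$ varies, can be captured by finitely many $\Cal R$-definable functions evaluated on $P$. Concretely, I would use that $S$, as a dimension-$0$ subset of $\R^m$ definable in $\WR^\#$, satisfies $S\subseteq \cl{f(P^k)}$ for some $\Cal R$-definable $f$ (this is the unparametrized case of the statement, which one proves first by the same decomposition plus the induced-structure description of subsets of $P^k$), and then parametrize over $x$.

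The honest way to organize this is an induction on $n$. For $n=0$ the statement is: a dimension-$0$ set $A\subseteq\R$ definable in $\WR^\#$ is contained in $\cl{f(P^m)}$ for some $\Cal R$-definable $f$ — and this is the base case one extracts directly from the combination of Fact~\ref{DPI} (decompose $A=\bigcup_{s\in S}Z_s$, each $Z_s$ a point, so $A\subseteq\{g(s):s\in S\}$ for an $\Cal R$-definable $g$) with the description of $S$ as an $\la\Cal R,P\ra$-definable dimension-$0$ set, which by sparseness and the induced structure on $P$ lies in $\cl{h(P^k)}$. For the inductive step, view $A\subseteq\R^{n}\times\R$ with base variable $x=(x',x_n)$; fiber over $x_n$, apply the inductive hypothesis uniformly in $x_n$ to get a family $f_{x_n}$, absorb $x_n$ into the $\Cal R$-definable function, and re-glue using (DIM) to control dimensions. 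The main obstacle I expect is precisely this uniformity: making the function $f$ depend on $x$ in an $\Cal R$-definable way while only feeding it points of $P^m$, i.e.\ showing the decomposition of Fact~\ref{DPI} can be carried out in families so that the dimension-$0$ parameter set $S$ varies $\Cal R$-definably over $x$ and is itself covered by $\cl{(\text{$\Cal R$-definable})(P^k\times\{x\})}$. Once that parametrized version of the induced-structure description is in hand, the closure operation and the dimension bookkeeping via (DIM) are routine, and the conclusion $A_x\subseteq\cl{f(P^m\times\{x\})}$ follows.
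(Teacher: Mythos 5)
The paper does not actually prove this statement: it is Fact~\ref{FM}, imported verbatim from Friedman and Miller's \emph{Expansions of o-minimal structures by sparse sets} (the ``last claim in the proof of Theorem A''), so there is no internal proof to compare yours against. Judged on its own terms, your sketch has one genuine gap and one structural worry. The gap is the step you yourself flag as ``the main obstacle'': the entire content of the Friedman--Miller claim is the uniform, parametrized assertion that the dimension-$0$ data cutting out $A_x$ can be realized as $f(P^m\times\{x\})$ for a \emph{single} $\Cal R$-definable $f$ as $x$ ranges over $\R^n$. Friedman and Miller obtain this from a syntactic normal form for formulas in the expanded language (every $\WR^\#$-definable set is a finite union of sets of the form $\bigcup_{y\in D}\phi(\R^{n+1},y)$ with $\phi$ definable in $\Cal R$ and $D\subseteq P^m$), which is where the quantification over $P$ is actually tamed. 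Your sketch defers exactly this step (``once that parametrized version \dots is in hand, \dots the conclusion follows''), so as written it restates the difficulty rather than resolving it.

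The structural worry is circularity. You invoke noiselessness of $\WR^\#$ and Fact~\ref{DPI} to decompose $A$; but noiselessness of $\WR^\#$ under the sparseness hypothesis is essentially the \emph{conclusion} of Friedman--Miller's Theorem A, and Fact~\ref{DPI} (from \cite{S}) is a structure theorem for noiseless expansions whose proof in turn leans on the Friedman--Miller machinery, including the very claim at issue. Note that Fact~\ref{FM} assumes only that $P$ is sparse, not that $\WR$ is noiseless, so a proof that begins by assuming noiselessness either uses Theorem A as a black box (at which point one may as well quote the claim itself) or must be reorganized to avoid the loop. A smaller point: replacing $A$ by $\cl{A}$ does not obviously preserve the fiber condition, since $(\cl{A})_x$ may strictly contain $\cl{A_x}$.
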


\section{Proof of Theorem \ref{main1}, \ref{main2}}\label{proof main}
\par The goal of this section is only to establish Theorems \ref{main1} and \ref{main2}.

\begin{theorem} We assume that $\WR\subseteq \la\Cal R,P\ra^\#$ has the interior or isolated point property.
Let $f:X\subseteq \R^n\to \R^m$ be a $\cal C^\infty$-function definable in $\WR$ with a domain definable in \cal R. Then $f$ is definable in $\la \cal R,\Lambda(\WR)\ra$. 
\end{theorem}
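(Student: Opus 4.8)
The plan is to reduce the statement to the one-dimensional case and then exploit the structure theorems for semibounded o-minimal structures together with the sparseness of $P$. First I would observe that it suffices to treat $m=1$, since a function into $\R^m$ is definable if and only if each of its coordinate functions is. Next, working coordinate by coordinate, I would reduce to the case where the domain $X$ is a cell definable in $\cal R$, and then, by considering the restriction of $f$ to generic fibres and using the cell decomposition of $X$ in $\cal R$, to the case where $X$ is an open box (or at least has non-empty interior), since off a lower-dimensional definable-in-$\cal R$ set the function is locally a graph over such a box. The key point will then be to understand $f$ near a generic point of $X$.

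The heart of the argument is as follows. Since $\WR$ has the interior or isolated point property, $\WR^\#$ is noiseless, $P$ is sparse, and Fact \ref{FM} applies: the graph $\Gamma(f)$, being the graph of a function (hence with empty-interior fibres in the last coordinate), is contained in $\cl{g(P^m\times\{x\})}$ for some function $g$ definable in $\cal R$. Combining this with Fact \ref{str function} applied to $g$ (or to the $\cal R$-definable pieces coming from the cone decomposition of Fact \ref{ed-sbd}), one sees that $f$ is, on a neighbourhood of a generic point, a selection from a family $\{\lambda_t\}$ of affine functions definable in $\cal R$ whose linear parts have coefficients in $\Lambda(\cal R)$ and whose constant terms range over a set of dimension $0$ (a subset of something like $g(P^m)$ shifted). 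Now I invoke $\cal C^\infty$-smoothness: a smooth function whose graph lies in a countable (or dimension-$0$) union of graphs of affine functions with fixed slopes must, on a connected open set, agree with a single such affine function — otherwise its derivative would take values in a dimension-$0$ set while being continuous and non-constant, a contradiction. (Here even $\cal C^1$ suffices for this sub-step, which is why Theorem \ref{main2} can weaken the hypothesis once $\cal R$ is purely linear and the $B$ in Fact \ref{str function} degenerates.) Thus locally $f(x)=\sum_i \mu_i x_i + c$ with $\mu_i\in\Lambda(\cal R)$; the slopes $\mu_i$ are therefore total linear functions, hence lie in $\Lambda(\WR)$, and $f$ is definable in $\la\cal R,\Lambda(\WR)\ra$ on this neighbourhood.

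Finally I would globalize: the set of points near which $f$ agrees with an affine function definable in $\la\cal R,\Lambda(\WR)\ra$ is definable in $\WR$ and, by the above, contains a dense open subset of $X$ (its complement has lower dimension by (DIM)/noiselessness); covering $X$ by the (countably or $\cal R$-definably many) maximal open pieces on which a fixed such affine function is used, and using that $\Lambda(\WR)$ is a field so finitely many slopes are jointly available, one patches together to conclude that $f$ itself is definable in $\la\cal R,\Lambda(\WR)\ra$ on all of $X$. The main obstacle I anticipate is the patching/uniformity step: ensuring that only countably many (better, a dimension-$0$ family of) affine functions occur and that the decomposition of $X$ into pieces on which each occurs is itself definable in the right reduct, so that the final union is genuinely definable in $\la\cal R,\Lambda(\WR)\ra$ rather than merely in $\WR$. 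Handling this cleanly is where Fact \ref{DPI} and the structure theorem (Fact \ref{ed-sbd}) will have to be used with some care, and it is also the place where the $\cal C^\infty$ hypothesis (as opposed to $\cal C^1$) is genuinely needed in the general, non-purely-linear case.
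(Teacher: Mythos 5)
There is a genuine gap at the heart of your argument. From Fact \ref{FM} and Fact \ref{str function} you conclude that near a generic point $f$ is ``a selection from a family of affine functions with coefficients in $\Lambda(\cal R)$'', and hence locally affine. This is false for general semibounded $\cal R$: Fact \ref{str function} only says $f(x)\in\lambda(x)+B$ for a \emph{bounded} interval $B$, which controls the germ at infinity and says nothing locally, and the cells $Y_t$ produced by (DP)(I) are graphs of arbitrary $\cal R$-definable functions, not affine ones. For instance, if $\cal R$ contains $\cdot_{\res(0,1)^2}$, then $f(x)=x^2$ on $(0,1)$ is a smooth definable function with $\cal R$-definable domain that is nowhere locally affine, yet satisfies the theorem trivially. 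You yourself note that $B$ only ``degenerates'' in the purely linear case; that observation is exactly why your argument proves (a version of) Theorem \ref{main2} but not Theorem \ref{main1}. A secondary, unresolved issue is the patching step: even granting local affineness, you would need the decomposition of $X$ into pieces to be definable in $\la\cal R,\Lambda(\WR)\ra$, which you flag as the ``main obstacle'' without closing it.

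The paper's proof avoids both problems by a two-step reduction that your proposal does not contain. First, it invokes \cite[Theorem 1.7]{ES2} (whose proof goes through verbatim here) to conclude that any definable $\cal C^\infty$ function with $\cal R$-definable domain is already definable in $\R_{sb}=\la\R_{vec},\R_{res}\ra$; this is where smoothness and the dimension-$0$ decomposition of $\Gamma(f)$ into $\cal R$-cells are used, and it lands $f$ in a fixed o-minimal structure all at once, so no patching is needed. Second, Proposition \ref{red sb} classifies the reducts of $\R_{sb}$ containing $\cal R$: the reduct $\la\cal R,f\ra$ must equal $\la\cal R,(x\mapsto\lambda x)_{\lambda\in I}\ra$ for some $I$, and since each such $\lambda$ is then a total linear function definable in $\WR$, it lies in $\Lambda(\WR)$, giving definability of $f$ in $\la\cal R,\Lambda(\WR)\ra$. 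The missing idea in your proposal is precisely this detour through $\R_{sb}$ and its reduct classification, which is what lets the new functions be isolated as \emph{linear} ones without claiming that $f$ itself is affine.
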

\begin{proof}
First, by \cite{ES2}[Theorem 1.7], $f$ is definable in $\R_{sb}$ (by the exact same proof, actually, since it is not necessary that $\la\R_{sb},P\ra$ has the interior or isolated point property. Take for example ($1$) of Example \ref{ex}). By Proposition \ref{red sb}, $\la\cal R,f\ra=\la\Cal R,(x\mapsto \lambda x)_{\lambda\in I}\ra$ for some $I$ and by definition of $\Lambda(\WR)$ and since $f$ is definable in $\WR$, $f$ is definable in $\la\Cal R,\Lambda(\WR)\ra$. 
\end{proof}

\begin{proposition}\label{Z lin}
Let $\cal R$ be a linear reduct of $\R_{sb}$ of the form $\la\R,<,+,\big( (x\mapsto \lambda x)_{\res (0,1)}\big)_{\lambda\in I},(x\mapsto \lambda x )_{\lambda \in J}\ra$. We assume that $\la \cal R,P\ra$ has the interior or isolated point property and $\WR\subseteq \la\cal R,P\ra^\#$. Let $f$ be a $\Cal C^1$ function that has a domain that is definable in \cal R and that is definable in $\WR$. Then $f$ is definable in $\la\Cal R,\Lambda(\WR)\ra$. Moreover, $\Lambda(\WR)\subseteq I$.
\end{proposition}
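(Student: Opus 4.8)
\textbf{Proof plan for Proposition \ref{Z lin}.}
The plan is to mimic the proof of Theorem \ref{main1}, but replace the use of \cite{ES2}[Theorem 1.7] (which only produces definability in $\R_{sb}$ and requires $\cal C^\infty$) with an argument tailored to the purely linear, $\cal C^1$ setting. First I would reduce, by a routine induction on the structure of a cell, to the case where $f$ is a $\cal C^1$ function on a cell $X\sub \R^n$ definable in \cal R, and even to the case $f:\R\to\R$ after slicing: once the partial derivatives of $f$ are known to be definable in $\la\cal R,\Lambda(\WR)\ra$, integrating them (which is an operation available in any expansion of $\R_{gp}$ defining the relevant affine data) recovers $f$ up to a constant, so it suffices to control $\partial f/\partial x_i$, and these are again $\cal C^1$ if one is careful, or at least one works with the locally constant ``slope at a point''. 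The point of restricting to $\cal C^1$ rather than merely differentiable is that $x\mapsto f'(x)$ is then a definable continuous function, hence by Fact \ref{str function} applied in the ambient o-minimal reduct generated by $f$ it lies within a bounded distance of an affine function with coefficients in $\Lambda(\cal R)$; but $f'$ itself is definable in $\WR$.

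The heart of the argument is to show that $f'$ is \emph{locally constant}, so that $f$ is piecewise affine, with each affine piece having a slope $\lambda$ that is a single real number definable in $\WR$, i.e. $\lambda\in\Lambda(\WR)$. To see local constancy I would argue by contradiction: if $f'$ is non-constant on some subinterval, then since $\WR$ has the interior or isolated point property (hence $\WR^\#$ is noiseless, by the Proposition preceding Theorem \ref{main1}, and $P$ is sparse), the graph of $f'$ is a definable set of dimension $1$ with no interior, so by Fact \ref{FM} it is contained in $\cl{g(P^m\times\{-\})}$ for some function $g$ definable in \cal R. Because \cal R is purely linear, Fact \ref{str function} forces $g$ to be affine-in-each-variable with $\Lambda(\cal R)$-coefficients on each cone piece coming from Fact \ref{ed-sbd}; combined with sparseness of $P$ this makes $f'$ take values in a set of dimension $0$, and a dimension-$0$ subset of $\R$ that contains an interval's worth of values of a continuous function must be a single point — contradiction unless $f'$ was locally constant to begin with. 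Then each local value $\lambda$ of $f'$ is by construction definable in $\WR$ with parameters, hence a total linear function $x\mapsto\lambda x$ definable in $\WR$, so $\lambda\in\Lambda(\WR)$; assembling the pieces (finitely many, by o-minimality of the reduct generated by $f$ together with the cone decomposition) shows $f$ is definable in $\la\cal R,\Lambda(\WR)\ra$.

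For the final clause $\Lambda(\WR)\sub I$: I would take any $\lambda\in\Lambda(\WR)$ and note, by what was just proved applied to the function $x\mapsto\lambda x$ itself (it is $\cal C^1$ with semialgebraic, indeed \cal R-definable, domain $\R$), that $x\mapsto \lambda x$ is definable in $\la\cal R,\Lambda(\WR)\ra$ — which is circular — so instead I would run the Fact \ref{FM} / Fact \ref{str function} analysis directly on the graph of $x\mapsto\lambda x$: it is a $1$-dimensional definable set with no interior, hence contained in $\cl{g(P^m\times\{-\})}$, and the purely linear structure theorem pins $g$ down to affine pieces with slopes among the generators, i.e. restrictions $(x\mapsto\mu x)_{\res(0,1)}$ with $\mu\in I$. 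Since $P$ is sparse and the line $y=\lambda x$ is not contained in any finite union of \cal R-definable sets of dimension $<1$, one of these affine pieces must have $\mu=\lambda$, giving $\lambda\in I$.

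The main obstacle I anticipate is the local-constancy step: controlling $f'$ requires knowing that a $1$-dimensional definable set with empty interior, when squeezed through an \cal R-definable function as in Fact \ref{FM}, cannot ``spread out'' — and making this precise needs the purely linear hypothesis to rule out the o-minimal reduct generated by $f$ secretly containing multiplication on a bounded box (which would let $g$ be genuinely curved). The delicate point is that $\la\cal R,f\ra$ is o-minimal and semibounded but a priori could be strictly larger than \cal R, so I must either show it is still purely linear (this should follow because a $\cal C^1$ function whose graph sits inside $\cl{g(P^m\times\{-\})}$ for $g$ coming from a purely linear \cal R cannot create a pole or a field) or argue entirely within \cal R using that the candidate affine approximations to $f$ have slopes in $\Lambda(\cal R)\sub I$ already.
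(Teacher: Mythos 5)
Your overall strategy --- reduce to one variable, decompose the graph into affine pieces, and argue that the slope cannot vary because it is confined to a zero-dimensional set --- matches the paper's, and your intended punchline (a continuous slope function whose image is zero-dimensional, hence nowhere dense, must be constant on a connected domain) is exactly the one the paper uses. But the route you take to it has a genuine gap: you treat $f'$ as a function \emph{definable in $\WR$} (you apply Fact \ref{FM} to its graph and invoke Fact \ref{str function} for it). In a purely linear structure there is no multiplication of variables, so the difference quotient $\frac{f(x+h)-f(x)}{h}$, or equivalently the condition $|f(x+h)-f(x)-yh|\le\varepsilon|h|$ with $y$ a quantified variable, is not expressible; the graph of $f'$ is not known to be definable, and your local-constancy step cannot get off the ground as written. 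The paper sidesteps this entirely: by (DP)(I)/Fact \ref{DPI}, $\Gamma(f)=\bigcup_{t\in S}X_t$ with $\{X_t\}$ an $\cal R$-definable family of cells and $\dim S=0$; since $\cal R$ is purely linear, each one-dimensional $X_t$ is a piece of an affine line $x\mapsto a_tx+b_t$, so the slope is an $\cal R$-definable function of the \emph{parameter} $t$, and the set of slopes $\{a_t:t\in S'\}$ has dimension $0$ by (DIM). The $\cal C^1$ hypothesis is then used only as a real-analysis fact: the piecewise slope function extends continuously to $f'$ on $\R$, so $\Ima(f')$ is connected and contained in a nowhere dense set, hence a singleton; continuity of $f$ then matches the intercepts $b_t$. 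The clause $\Lambda(\WR)\subseteq I$ falls out immediately by applying this to $x\mapsto\lambda x$ itself, with no separate Fact \ref{FM} analysis needed.

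A second, independent problem: even granting that $\Gamma(f')$ were definable, Fact \ref{FM} only yields that $f'(x)$ lies in the closure of $h(P^m\times\{x\})$ with $h$ depending on the fibre variable $x$; on each affine piece $h(p,x)=\ell(p)+\mu x+c$, so what you actually get is that $f'(x)-\mu x$ lies in a fixed nowhere dense set, i.e.\ that $f'$ is locally \emph{affine}, not locally constant --- you would still need to rule out $\mu\neq 0$ (for instance by observing that a genuine parabola arc cannot be covered, via (DIM), by a $0$-dimensional family of affine cells). Both difficulties disappear if you extract the slopes from the decomposition of $\Gamma(f)$ itself, as the paper does, rather than from a decomposition of the (possibly undefinable) graph of $f'$.
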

Before proceeding to the proof, we first need a basic fact. We don't prove it.
\begin{fact}\label{lin dim 1}
Let $f: \R^n\to \R$ be a function. Then $f$ is linear if and only if for every $x\in \R^{n-1}$, $\sigma$ a coordinate permutation, $g:y\mapsto f(\sigma(x,y))$ is linear.
\end{fact}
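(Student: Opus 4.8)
The plan is to prove the stated equivalence directly, treating it as a characterisation of (total) linearity by its one-variable restrictions. The left-to-right direction is immediate, since restricting an affine $f$ to any axis-parallel line — which is exactly what $g:y\mapsto f(\sigma(x,y))$ produces, for a fixed $x\in\R^{n-1}$ and a coordinate permutation $\sigma$ — yields an affine function of $y$. So the entire content lies in the converse, and I would establish it by induction on $n$. The base case $n=1$ is nothing but the hypothesis itself.

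For the inductive step, write $\R^n=\R^{n-1}\times\R$ with coordinates $(x,y)$. Applying the hypothesis with $\sigma=\mathrm{id}$ gives, for each fixed $x$, that $y\mapsto f(x,y)$ is affine, so $f(x,y)=a(x)\,y+b(x)$ for some $a,b:\R^{n-1}\to\R$. Now freeze an arbitrary $y\in\R$ and consider $h_y:x\mapsto f(x,y)$. Every axis-parallel slice of $h_y$ is a slice of $f$ taken in one of the first $n-1$ coordinate directions with the last coordinate frozen at $y$, and all such slices are covered by the hypothesis (choose a $\sigma$ sending the relevant coordinate to the last slot and record $y$ together with the remaining coordinates in the $\R^{n-1}$-parameter). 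Hence $h_y$ satisfies the inductive hypothesis and is affine on $\R^{n-1}$, for every $y$. Evaluating at $y=0$ and $y=1$ shows $b=h_0$ and $a+b=h_1$ are affine, so both $a$ and $b$ are affine functions of $x$.

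We are then left with $f(x,y)=a(x)\,y+b(x)$, with $a,b$ affine, so $f$ is affine exactly when $a$ is constant; eliminating a non-constant $a$ is the only non-formal step, since separate affineness by itself tolerates genuine product terms such as $x_1y$. This is where I expect the real difficulty, and where the regularity present in every application of this statement is used: the functions to which it is applied are definable in a linear reduct of $\R_{sb}$, hence — by Facts \ref{ed-sbd} and \ref{str function} — continuous and semilinear (locally, and in fact piecewise, affine). A piecewise-affine function of the form $a(x)\,y+b(x)$ can be locally affine only if the product $a(x)\,y$ is, which forces $a$ to be constant; then $f=c\,y+b(x)$ is affine and the induction closes. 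The main obstacle is thus precisely the passage from separate affineness (equivalently, multi-affineness) to joint affineness, resolved by the semilinearity of the functions at hand.
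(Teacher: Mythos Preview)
The paper does not prove this fact --- it explicitly says ``We don't prove it'' --- so there is no argument to compare against. You are right that the statement is false as written: $f(x,y)=xy$ has every axis-parallel slice affine yet is not affine, precisely the product-term obstruction you name. So the paper's ``basic fact'' is, taken literally, not a fact.

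Your repair is the right idea but the induction as you set it up does not quite close. You derive that $b=h_0$ and $a=h_1-h_0$ are affine by applying the inductive hypothesis to $h_0,h_1$; but if the statement being proved by induction is the (false) unrestricted one, that hypothesis is vacuous, and if it is the corrected one (separate affineness plus local affineness on an open set), then you must check that $h_0$ and $h_1$ inherit that regularity --- and they need not, since the hyperplanes $y=0$, $y=1$ may miss the chunk on which $f$ is locally affine. The fix is easy: choose $y_0\neq y_1$ in the last-coordinate projection of that chunk, so that $h_{y_0},h_{y_1}$ are locally affine on its base; then $a=(h_{y_1}-h_{y_0})/(y_1-y_0)$ is affine by the (now valid) inductive hypothesis, constant on an open set, hence constant everywhere. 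Alternatively, and more directly: separate affineness already forces $f$ to be a polynomial of degree at most one in each variable, and a polynomial agreeing with an affine function on an open set is that affine function. Either way, your conclusion --- that Fact~\ref{lin dim 1} holds not for arbitrary $f$ but for the locally piecewise-affine functions to which Proposition~\ref{Z lin} applies it --- is correct.
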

\begin{proof}(of Proposition \ref{Z lin})
By Fact \ref{lin dim 1}, we may assume that $n=1$ and that $\dom(f)=\R$.
 Moreover, since $\{X_t:\; t\in S\}$ is a family of linear functions, there are  $\{a_t, b_t\in \R:\;t\in S\}$ so that for every $t\in S$ $X_t$ is the restriction  of $x\mapsto a_tx+b_t$ to its domain.
 Let $S'=\{t\in S:\; \dim(X_t)=1\}$.  By (DIM), $$\dim\big(\Gamma(f)\sm\bigcup_{t\in S'}X_t\big)=0$$
 and since $f$ is continuous, $$\Gamma(f)=\cl{\bigcup_{t\in S'}X_t}.$$ Since $f$ is $\Cal C^1$, $$g:\bigcup_{t\in S'}\dom(X_t)\to \R, \,x\in \dom(X_t) \mapsto a_t $$
 can be extended by continuity to $\R$.  Therefore, since $\dim(\Ima(f'))=0$, $\Ima(f')=\{*\}$  and for every $t,t'\in S'$ $a_t=a_{t'}$. Moreover by continuity of $f$, $b_t=b_{t'}$ and  $f$ is then linear with coefficients in $I$ and is definable in $\la \R,<,+,(x\mapsto \lambda x)_{\lambda\in I}\ra$.
\end{proof}

\begin{rmk}
Note that, assuming definable completeness (see \cite{Mil-IVT}. Equivalently, asking for the intermediate value property for definable continuous functions, or that any interval is definably connected), Theorems \ref{main1}, \ref{main2} holds for non archimedian expansions of a semibounded expansion of an additive abelian ordered group by a set of dimension $0$ so that the interior or isolated point property holds.
\end{rmk}

\section{d-minimality}\label{d-min}
In this section, we establish d-minimality for the structures of Example \ref{ex}. 

\subsection{local o-minimality of ($1$)-($2$)}In this subsection,. we establish local o-minimality for 
 $ \la\cal R,\Z\ra$ (if $\Lambda(\Cal R)=\Q$) and $\la\Cal R,P\ra$ with $P$ an iteration sequence for some expansion of \cal R. 
%\begin{definition}
%We say that an ordered  structure \cal M is \emph{locally o-minimal} if given a definable set $X\subseteq M$, for every $x\in X$, there is an interval $I$ such that $I\cap X$ is a finite union of intervals and points. It follows that every bounded definable set of dimension $0$ $X\sub M^n$ is finite. 

 %Note that local o-minimality implies d-minimality since a set of dimension $0$ needs to be discrete.
%\end{definition}

\begin{fact}\label{R0Z dmin}
The structure $\la \R, <, +, \Z\ra$ is locally o-minimal.
\end{fact}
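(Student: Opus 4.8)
The statement to prove is that any bounded definable set of dimension $0$ in $\la\R,<,+,\Z\ra$ is finite. The plan is to use a quantifier-elimination / cell-decomposition description of definable sets in this structure. The structure $\la\R,<,+,\Z\ra$ is a classical example (essentially the model companion of $\Z$-groups fattened up to the reals, studied by van den Dries and others): every definable set is a finite Boolean combination of sets defined by linear inequalities over $\Q$ and by congruence-type conditions of the form ``$x\in q\Z+r$'' for $q\in\Q^{>0}$, $r\in\R$. More precisely, I would cite (or quickly recall) that $\la\R,<,+,\Z\ra$ admits quantifier elimination in the language expanded by constants for rationals, the function $x\mapsto \{x\}$ (fractional part), and predicates for $\frac1n\Z$; hence every definable subset of $\R$ is a finite union of points, open intervals with rational-slope-style endpoints (here endpoints in $\Q+\Z = \Q$, or $\pm\infty$), and translates of lattices $\frac1n\Z$ intersected with such intervals.

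Granting that description, the argument is then short. Let $X\subseteq\R$ be definable, bounded, with $\dim X = 0$, i.e.\ $X$ has empty interior. Write $X$ as a finite union of pieces coming from the normal form: finitely many points, finitely many intervals, and finitely many sets of the form $(a,b)\cap(\frac1n\Z+r)$. Since $X$ has empty interior it contains no interval, so no open-interval piece occurs; the point-pieces contribute only finitely many elements. Each remaining piece $(a,b)\cap(\frac1n\Z+r)$ is a bounded subset of an arithmetic progression with common difference $\frac1n>0$, hence is finite. A finite union of finite sets is finite, so $X$ is finite. That is exactly local o-minimality.

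\textbf{Main obstacle.} The only real work is pinning down the quantifier-elimination / normal-form statement for $\la\R,<,+,\Z\ra$ in a form strong enough that the one-dimensional definable sets are visibly finite unions of points, intervals, and bounded arithmetic progressions. Depending on how much machinery the paper wants to import, one either cites the standard reference for the model theory of this structure, or gives a self-contained induction: definable subsets of $\R$ are projections of definable subsets of $\R^{1+k}$, and one analyses such projections using that $\la\R,<,+,\Z\ra$ is a ``dense pair''-style expansion where $\Z$ is discrete and the induced structure on $\Z$ is just Presburger arithmetic. The subtlety to be careful about is that a priori a bounded definable $0$-dimensional set could be something like $\{\,\sum_{i} 2^{-i}\varepsilon_i : \varepsilon_i\in\{0,1\}\,\}$ — a bounded \emph{infinite} discrete set — and the point of invoking the structure theory is precisely that no such set is definable here: definable discrete sets are controlled by a single modulus $\frac1n$ and hence are locally finite. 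Once that is in hand, the rest is immediate, and I would keep the write-up to a couple of lines after the normal-form citation.
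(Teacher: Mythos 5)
Your argument is correct in substance, but it takes a genuinely different route from the paper. The paper disposes of this fact in one line: it cites Toffalori--Vozoris (\cite[Theorem 2.7]{ToVo}) for the local o-minimality of $\la\R,<,+,x\mapsto\sin(2\pi x)\ra$ and observes that $\la\R,<,+,\Z\ra$ is a reduct of that structure; since local o-minimality (every bounded definable set of dimension $0$ is finite) passes trivially to reducts, the fact follows. You instead argue directly via a quantifier-elimination/normal-form description of the one-variable definable sets of $\la\R,<,+,\Z\ra$ as finite unions of points, intervals, and bounded arithmetic progressions $(a,b)\cap(\frac1n\Z+r)$, from which local o-minimality is immediate. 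Your route is fine and arguably more informative (it explains \emph{why} no bounded infinite $0$-dimensional set can be defined, rather than quoting a black box), but it shifts all the work onto the QE statement, which is itself a nontrivial theorem about mixed real-integer linear arithmetic and would need a precise citation or proof -- you correctly flag this as the main obstacle. The paper's citation route is shorter and avoids having to state the normal form carefully (in particular, the subtlety that the floor-function language produces unions of half-open intervals, which disappear once one restricts to sets with empty interior). Either write-up would be acceptable; if you keep yours, supply the reference for the quantifier elimination rather than leaving it as ``a standard reference.''
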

\begin{proof}
This is a consequence of the local o-minimality of $\la\cal R, <, +, \big( x\mapsto \sin(2\pi x)\big)\ra$, proved in \cite[Theorem 2.7]{ToVo},  since $\la\cal R, <, +,\Z\ra$ is a reduct of it.
\end{proof}

%\begin{lemma}\label{eq rel loc omin} Let $P\sub \R$ with $\dim P=0$ and assume that $\la \R, <, +, \Lambda (\cal R), P\ra$ is locally o-minimal.
%Let $\sim$ be an equivalence relation over $\R^k$ definable  in $\la \R, <, +\ra$. Let $K$ be a bounded set. Then $$Z=\{A\in \R^k/\sim\,:\; A\cap P^k\neq \emptyset \text{ and }A\cap K\neq \emptyset\}$$
%is finite.
%\end{lemma}
%\begin{proof}
%By definable choice in o-minimal structures, there is a definable set $S\subseteq K$ of representatives for $K/\sim$. Moreover, the set $$S'=\{a\in S\,:\; \text{ there is a class $A\in \R^k/\sim$ such that $a\in A$ and $A\cap  P^k\neq \emptyset$}\}$$
%is countable and thus has  dimension $0$. By local o-minimality of $\la \R,<,+,P\ra$, since $S'\subseteq S\subseteq K$, $S'$ is finite and thus $Z=\{A\in \R^k/\sim\,:\; \exists a\in S',\, a\in A\}$ is finite too.

%\end{proof}
\begin{lemma}\label{R1Z dmin}
Let $P\sub \R$ with $\dim P=0$ and assume that $\la \R,<,+,\Lambda(\cal R),P\ra$ is locally o-minimal. Then $\la \cal R,P\ra$ is locally o-minimal.
\end{lemma}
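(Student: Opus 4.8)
The statement to prove is: if $P \subseteq \R$ has $\dim P = 0$ and $\la \R, <, +, \Lambda(\cal R), P\ra$ is locally o-minimal, then $\la \cal R, P\ra$ is locally o-minimal. The plan is to reduce definability in $\la \cal R, P\ra$ to definability in the purely linear structure $\la \R, <, +, \Lambda(\cal R), P\ra$ as far as bounded $0$-dimensional sets are concerned. First I would invoke the semibounded structure theory: by Fact \ref{ed-sbd} (decomposition into cones) together with Fact \ref{str function}, every set definable in the o-minimal semibounded structure $\cal R$ is built from bounded $\cal R$-definable pieces translated along directions in $\Lambda(\cal R)^n$. So the only ``unbounded'' behaviour of $\cal R$-definable sets is linear, governed by $\Lambda(\cal R)$. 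This suggests that for the purpose of checking that bounded definable $0$-dimensional sets are finite, the field structure / bounded multiplication in $\cal R$ contributes nothing beyond what $\la \R, <, +, \Lambda(\cal R)\ra$ already gives, at least after localising to a bounded box.

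Concretely, let $X \subseteq \R$ be definable in $\la \cal R, P\ra$, bounded, with $\dim X = 0$; I must show $X$ is finite. Using Fact \ref{DPI} (or directly (DP), since local o-minimality of $\la \cal R, P\ra$ gives the interior-or-isolated-point property hence noiselessness), write $X = \bigcup_{t \in S} X_t$ where $\{X_t\}_{t \in A}$ is an $\cal R$-definable family, $S \subseteq A$ has $\dim S = 0$, and each $X_t$ is a cell — here a point or an interval; since $\dim X = 0$ each relevant $X_t$ is a singleton, so $X \subseteq \{X_t : t \in S'\}$ for the subfamily $S'$ of $t$ with $X_t$ a point. Now the map $t \maps$ (the point $X_t$) is $\cal R$-definable on $A$, and its restriction to the $0$-dimensional set $S$; the key reduction is that on a bounded box this $\cal R$-definable function agrees with a function definable in $\la \R, <, +, \Lambda(\cal R)\ra$ up to the cone decomposition — more carefully, one covers $A$ by finitely many cones, and on each cone the relevant coordinate projection is, modulo a bounded $\cal R$-definable correction, an affine map with coefficients in $\Lambda(\cal R)$. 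Restricting attention to where $X$ is bounded forces the cone parameters to range over a bounded set, and there the $\cal R$-structure is "tame enough" that the image of a $0$-dimensional set is again $0$-dimensional and in fact describable in the purely linear reduct together with $P$. Then local o-minimality of $\la \R, <, +, \Lambda(\cal R), P\ra$ applies to conclude $X$ is finite.

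I expect the main obstacle to be making precise the claim that, after localising to a bounded box, an $\cal R$-definable function sending a $0$-dimensional parameter set to $\R$ produces a set that is already controlled by $\la \R, <, +, \Lambda(\cal R), P\ra$. The subtlety is that $\cal R$ may contain genuine bounded multiplication (as in $\cal R = \la R, <, +, \cdot_{\res [0,1]^2}\ra$), so an $\cal R$-definable function need not be piecewise affine; what saves us is that $P$ is sparse (Remark after the definition of sparse, since $\la \cal R, P\ra$ is noiseless), so by Fact \ref{FM} any definable set with $0$-dimensional fibres lies in the closure of $f(P^m)$ for some $\cal R$-definable $f$; combined with boundedness and the cone structure theorem, $f(P^m)$ restricted to a bounded range is handled by finitely many bounded charts of $\cal R$, on each of which one must argue (using o-minimal cell decomposition in $\cal R$ and the fact that $P$ is $0$-dimensional) that only finitely many values arise — this is essentially where local o-minimality of the linear-plus-$P$ structure is fed in, after peeling off the bounded $\cal R$-definable "noise'' which contributes only finitely much on a compact piece. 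I would organise the write-up so that the cone decomposition and Fact \ref{FM} do the structural work, and the hypothesis on $\la \R, <, +, \Lambda(\cal R), P\ra$ is invoked exactly once at the end.
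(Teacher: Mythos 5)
Your core route is the same as the paper's: derive that $P$ is sparse, apply Fact~\ref{FM} to trap the $0$-dimensional set inside $\cl{f(P^k)}$ for an $\cal R$-definable $f$, use the semibounded structure theorem (Fact~\ref{str function}) to replace $f$, up to a bounded error interval, by an affine map with coefficients in $\Lambda(\cal R)$, and then invoke local o-minimality of $\la \R,<,+,\Lambda(\cal R),P\ra$ once at the end. That is exactly the paper's proof, so the plan is essentially correct.

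One concrete flaw: your parenthetical justification for using (DP)/Fact~\ref{DPI} --- ``since local o-minimality of $\la\cal R,P\ra$ gives the interior-or-isolated-point property hence noiselessness'' --- is circular, because local o-minimality of $\la\cal R,P\ra$ is the conclusion of the lemma. You do not need that detour at all: sparseness of $P$ should instead be extracted from the \emph{hypothesis}, since $P$ is definable of dimension $0$ in the locally o-minimal structure $\la\R,<,+,\Lambda(\cal R),P\ra$, hence discrete and countable, hence sparse (this is how the paper gets it). Drop the cell-decomposition thread, keep the Fact~\ref{FM} thread, and make the last step explicit: if $f(x)\in g(x)+C$ with $C=(-a,a)$ bounded and $g$ affine over $\Lambda(\cal R)$, then $f(P^k)\cap B\neq\emptyset$ forces $g(x)\in B+C$, and $g(P^k)\cap(B+C)$ is a bounded $0$-dimensional set definable in the linear reduct with $P$, hence finite.
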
%\todo{Haven't read the proof yet}
\begin{proof}Let $X\subseteq \R$ be a definable set of dimension $0$. We show that $X$ is discrete.
Since $P$ is countable, it is sparse and by Fact \ref{FM} there is a function definable in \Cal R $f:\R^k\to \R$ such that $X\subseteq \cl{f(P^k)}$.
  \par Let $B$ be a bounded interval. We just have to  show that $B\cap f(P^k)$ is finite.
  By Fact \ref{str function}, there is $g:\R^k\to \R$, a linear function (definable in \Cal R) and a bounded interval $C=(-a,a)$ so that for every $x\in P^k$, $f(x)\in g(x)+B$. Since $\la \R,<,+,\Lambda(\cal R),P\ra$ is locally o-minimal, $g(P^k)\cap (B+C)$ is finite, $f(P^k)\cap B$ is finite too and we have the result.

\end{proof}
\begin{rmk} Note that assuming that $\la\R,<,+,\Lambda(\cal R),P\ra$ is locally o-minimal and not only $\la\R,<,+,P\ra$ is necessary; for example, take  $\la\R,<,+,(x\mapsto \sqrt{2}x)\ra$, that is a semibounded o-minimal expansion of $\la\R,<,+\ra$ but $\la\R,<,+,(x\mapsto \sqrt{2}x),\Z\ra$ is not d-minimal as shown in \cite{Hiero two gps}.
\par We could ask if the same result holds for d-minimality instead of local o-minimality. It is not the case, see Section \ref{discussion} for a counterexample.
\end{rmk}
\medskip
\par In order to prove local o-minimality for Example \ref{ex}($2$) , we first need some more general proposition about the structure induced on $P$ by $\WR$.
\begin{proposition}\label{Pind}
The structure induced on $P$ by $\la\cal R,P\ra$ is $\la P,<\ra$.
\end{proposition}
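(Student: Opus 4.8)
The plan is to show that every subset of $P^k$ definable in $\la\cal R,P\ra$ is already definable in the pure order $\la P,<\ra$; equivalently, that $P_{ind}(\cal R)$ has quantifier elimination down to the order. First I would reduce to the case of a definable subset $X\subseteq P^k$ and, using Fact \ref{FM} (applicable since $P$, being countable, is sparse), find a function $f:\R^{m}\to \R$ definable in $\cal R$ such that $X$ is controlled by $f(P^m)$; more precisely one wants to express membership in $X$ via $\cal R$-definable relations evaluated on tuples from $P$. The key structural input is Fact \ref{ed-sbd} together with Fact \ref{str function}: every $\cal R$-definable function, and more generally every $\cal R$-definable set, decomposes into finitely many cones, so on a cone each coordinate of $f$ looks like a fixed $\Lambda(\cal R)$-linear function plus a bounded error. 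Thus the only arithmetic that $\cal R$ can perform on elements of $P$ is, up to a bounded perturbation, $\Lambda(\cal R)$-linear combination.

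Next I would exploit the specific hypotheses available: the ambient $\cal R$ here is the purely linear / $\R_{res}$-type structure, so by Proposition \ref{red sb} we are in the situation $\cal R=\la\cal R',(x\mapsto\lambda x)_{\lambda\in I}\ra$ with $\Lambda(\cal R')=\Q$. Combined with the local o-minimality statements of this section (Fact \ref{R0Z dmin}, Lemma \ref{R1Z dmin}) and the $f$-boundedness / iteration-sequence hypotheses of Example \ref{ex}, one gets strong growth control: a $\Lambda(\cal R)$-linear image of $P^m$ meets every bounded interval in a finite set, and more globally, the order type of such an image is determined. The heart of the argument is then to show that any definable relation $R(p_1,\dots,p_k)$ on $P$, after pushing it through the linear decomposition, becomes a finite Boolean combination of conditions of the form "$\lambda_1 p_{i_1}+\dots+\lambda_k p_{i_k}$ lies in a bounded interval" and "$\lambda_1 p_{i_1}+\dots < \mu_1 p_{j_1}+\dots$", and that each of these, by the growth/gap properties of $P$ and by induction on $k$, is already determined by the $<$-type of $(p_1,\dots,p_k)$ — the bounded-interval conditions collapse to finitely many exceptional tuples, which are themselves order-definable since $P$ has no $\cal R$-definable accumulation points.

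The step I expect to be the main obstacle is precisely the passage from "membership in $X$ is governed by $\cal R$-definable linear relations on $P$-tuples" to "these relations are order-definable". One has to rule out that $\la\cal R,P\ra$ sees any nontrivial additive coincidences among elements of $P$ — i.e. that $\{(p,q,r)\in P^3: p+q=r\}$ or similar is not recovered — and the natural way is to use that $P$ is an iteration sequence or a multiplicative set $\alpha^\Z$ for $\alpha\notin\Lambda(\cal R)$, so that any $\Lambda(\cal R)$-linear relation holding on infinitely many $P$-tuples forces a relation on the exponents contradicting the growth rate, while only finitely many tuples can satisfy it otherwise, and those finitely many are order-definable over the empty set. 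Handling the case $\cal R$ itself already contains $x\mapsto\alpha x$ (so $P=\alpha^\Z$ is an iteration sequence \emph{for} $\cal R$, as in Remark \ref{rmk alpha^N}) needs a separate look, but there one can appeal directly to Lemma \ref{R1Z dmin} and the known description of the structure induced on $\alpha^\Z$. I would organize the write-up as: (i) reduce to $X\subseteq P^k$ via Fact \ref{FM}; (ii) apply the cone decomposition to linearize; (iii) induct on $k$ using local o-minimality to eliminate bounded-interval constraints; (iv) conclude order-definability.
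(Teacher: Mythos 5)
Your plan has a genuine gap at its foundation: the tools you invoke control only the sets definable in $\cal R$, not the sets definable in the pair $\la\cal R,P\ra$, and it is the latter that determine $P_{ind}$. Step (i) is vacuous: for $X\subseteq P^k$, Fact \ref{FM} only yields containment of $X$ in the closure of some $f(P^m)$, which says nothing about \emph{which} subset of $P^k$ the set $X$ is (every subset of $P^k$ is contained in $P^k$ itself). Likewise, the cone decomposition (Fact \ref{ed-sbd}) and Fact \ref{str function} describe $\cal R$-definable sets and functions, so steps (ii)--(iv) presuppose that an arbitrary $\la\cal R,P\ra$-definable relation on $P$-tuples is already a Boolean combination of traces of $\cal R$-definable (hence, after linearization, affine) conditions. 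That presupposition is exactly the hard content: a priori a formula defining $X\subseteq P^k$ quantifies over all of $\R$ and over $P$, and nothing you cite rules out that such quantification produces relations on $P$ that are not of the linear form you analyze. What is needed here is a quantifier-elimination or cell-decomposition theorem for the expanded structure $\la\cal R,P\ra$ itself; the growth arguments you sketch can only dispose of the quantifier-free linear conditions once that reduction is in hand. (Note also that additive coincidences on $P$ are not to be ``ruled out'': for $P=2^{\Z}$ the relation $p+q=r$ does hold on infinitely many triples; the point is that such relations turn out to be definable from $<$ alone, e.g.\ via the successor.)

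The paper's proof supplies precisely this missing input and is much shorter: it first observes that one may assume $P$ is an iteration sequence for $\cal R$ itself (if the induced structure from a larger o-minimal $\cal R'$ is already $\la P,<\ra$, the same holds for the reduct), and then cites Fornasiero's use of the Miller--Tyne quantifier elimination for expansions of o-minimal structures by iteration sequences to conclude that $P_{ind}$ is o-minimal; the Pillay--Steinhorn theorems on discrete o-minimal structures then force $P_{ind}=\la P,<\ra$. If you want a self-contained argument along your lines, you would have to reprove (a sufficient fragment of) the Miller--Tyne elimination for the pair; as written, your outline assumes its conclusion at the point where you pass from ``definable relation'' to ``Boolean combination of linear conditions on $P$-tuples''.
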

\begin{proof}First, note that if $P$ is an iteration sequence for $\Cal R'$ some expansion of \Cal R, and that the structure induced on $P$ by $\Cal R'$ is $\la P,<\ra$ then the structure induced on $P$ by $\Cal R$ is also $\la P,<\ra$. Therefore  we may assume that $P$ is an iteration sequence for $\cal R$. 
\par In \cite[Proof of Lemma 1.1]{For dcnc}, using the quantifier elimination result from \cite{MT}, Fornasiero is actually showing that $P_{ind}$ is o-minimal. By \cite{PS disc o-min 1}, $P_{ind}$ is strongly o-minimal and by \cite[Theorem 3.1]{PS disc o-min 2}, $P_{ind}(\WR)=\la P,<\ra$.
\end{proof}
In the following, $s:P\to P$ denote the successor function. 
\begin{definition}
Let $X\subseteq P$ be definable in $\la P,<\ra$. We say that $X$ is an {\em s-cell} if it is a point or an iterval of the form $(a,+\infty)$ for some $a\in \P$. Let $X\subseteq P^n$ and let $\pi$ be the projection on the first $n-1$-coordinates. We say that $X$ is an $s$-cell if, $\pi(X)$ is one and $X$ either has the form $$\{(x,y)\,:\; x\in \pi(X),\, y=f(x)\}$$
or $$\{(x,y)\,:\; x\in \pi(X),\, g(x)>y>f(x)\}$$
for some function $f,g:P^{n-1}\to P\cup\{\infty\}$ of the form $x\mapsto a$ for some $a\in P\cup\{\infty\}$ or $x\mapsto s^k (\pi_i(x))$ for some $i\in \{1,\ldots,n-1\}$ and $k\in \Z$ and where $\{\#\{P\cap (f(x),g(x))\}:\; x\in \pi(X)\}$ is unbounded.
\end{definition}
\begin{fact}\label{Fact alpha}
Let $X$ be a set definable in $\la P,<\ra$. By a simple quantifier elimination result we get that there is decomposition of $X$ into finitely many s-cells.
\end{fact}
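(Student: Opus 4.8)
The plan is to prove Fact~\ref{Fact alpha} by quantifier elimination for the structure $\la P, <\ra$, where $P$ is an iteration sequence and the ordering $<$ is a discrete linear order with a least element and no greatest element, isomorphic to $(\N, <)$. The key observation is that $\la P, <\ra$ (equivalently $\la \N, <\ra$) admits quantifier elimination after adding the successor function $s$ and the constant for the least element; this is classical. So I would first pass to the definitional expansion by $s$ and the minimum, and argue that every formula is equivalent to a boolean combination of atomic formulas of the form $s^k(x_i) < s^\ell(x_j)$, $s^k(x_i) = s^\ell(x_j)$, $s^k(x_i) < a$, $s^k(x_i) = a$ for $a$ among the named constants, with $k, \ell \in \Z$ (reading $s^k(x)$ as defined only when it makes sense, i.e. as a partial term, or equivalently handling the boundary cases near the least element separately).

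Granting quantifier elimination, the argument is an induction on $n$. For $n = 1$, a quantifier-free definable $X \subseteq P$ is a boolean combination of conditions $x = a$, $x < a$, $x > a$, $x = s^k(\text{const})$, etc., which cuts $P$ into finitely many points and finitely many rays; each point is an $s$-cell, each cofinite ray $(a, +\infty)$ is an $s$-cell, and any bounded interval is finite hence a finite union of points. For the inductive step, write $X \subseteq P^n$ and let $\pi$ be the projection onto the first $n-1$ coordinates. By the inductive hypothesis applied to $\pi(X)$ (and refining), decompose $\pi(X)$ into $s$-cells $C_1, \dots, C_r$; on each $C_j$, the fibre $X_x = \{y : (x,y) \in X\}$ is, by quantifier elimination, defined by a boolean combination of conditions comparing $y$ with terms $s^k(\pi_i(x))$ and with constants. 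Partitioning further so that the relative order of all these finitely many "threshold" terms is constant on $C_j$, the fibre $X_x$ becomes a fixed boolean combination of intervals with endpoints of the prescribed form $x \mapsto a$ or $x \mapsto s^k(\pi_i(x))$; this exhibits $X \cap \pi^{-1}(C_j)$ as a finite union of graph-type and band-type $s$-cells, except that we must still split off the sub-pieces on which a band $\{y : f(x) < y < g(x)\}$ has \emph{bounded} cardinality as $x$ ranges over $C_j$ — on such a piece the number of $y$'s is uniformly bounded, so by a final finite refinement we reduce to graphs, and on the complementary piece the cardinality is unbounded, matching the $s$-cell definition.

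The main obstacle, and the only place real care is needed, is bookkeeping around the least element of $P$: the term $s^k(x)$ is genuinely partial for $k < 0$ near the bottom, so one has to either work with $s^{-1}$ as a partial function and split into the finitely many cases "$x$ is within distance $|k|$ of the minimum" versus "$x$ is far from the minimum", or fix conventions so these boundary cases collapse into extra point-$s$-cells. This is routine but must be stated; the paper signals this by calling it "a simple quantifier elimination result", so I would cite the classical QE for $(\N, <, s, 0)$ (or Presburger arithmetic restricted to the order-successor reduct) and carry out only the cell-extraction bookkeeping, keeping the $s$-cell definition's unboundedness clause in mind throughout so that the final refinement step is clean.
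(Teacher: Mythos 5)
Your proposal is correct and follows exactly the route the paper intends: the paper offers no written proof of this Fact beyond the phrase ``a simple quantifier elimination result,'' and your elaboration --- quantifier elimination for $(\N,<,s,0)$, induction on $n$, refining the base so the threshold terms $s^k(\pi_i(x))$ have constant relative order, and splitting bands of bounded fibre cardinality into graphs $y=s^j(f(x))$ to respect the unboundedness clause --- is precisely the bookkeeping that phrase stands in for. The one step to make explicit in a full write-up is that each refinement of the base cell must itself be re-decomposed into s-cells via the induction hypothesis, but that is routine.
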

\begin{lemma}\label{s-c dim 1}
Let $X\subseteq P^{n}$ be an infinite s-cell. Then there is $Y\subseteq X$, $K\subseteq \{1,\ldots n\}$, $a_i\in P$ and $j_i\in \Z$ for $i\in \{1,\ldots,n\}$, $a\in P$  so that
$$Y=\{(x_1,\ldots,x_n):\; \text{there is $x>a\in P$ for $i\in K$ $x_i=s^{j_i}x$, and for $i\notin K$, $x_i=a_i$}\}.$$
\end{lemma}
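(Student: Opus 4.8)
The statement says that any infinite $s$-cell $X \subseteq P^n$ contains a "diagonal" subset $Y$ of the prescribed shape: a coordinate subset $K$ on which all coordinates are shifted copies $s^{j_i}x$ of a single running parameter $x > a$, while the coordinates outside $K$ are frozen to constants $a_i \in P$. The plan is to induct on $n$, using the recursive definition of $s$-cell via the projection $\pi$ onto the first $n-1$ coordinates.

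\smallskip

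\textbf{Base case and inductive setup.} For $n = 1$, an infinite $s$-cell in $P$ is, by definition, an interval $(a, +\infty) \cap P$; take $K = \{1\}$, $j_1 = 0$, and $Y = X$ itself. For the inductive step, assume the result for $n-1$ and let $X \subseteq P^n$ be an infinite $s$-cell with $\pi(X) \subseteq P^{n-1}$ its projection. Since $X$ is infinite and each fiber $X_x$ for $x \in \pi(X)$ is either a singleton or a bounded (but unboundedly large) interval of $P$, the projection $\pi(X)$ must itself be infinite, hence an infinite $s$-cell in $P^{n-1}$. Apply the induction hypothesis to $\pi(X)$: there is $Y' \subseteq \pi(X)$, a set $K' \subseteq \{1,\dots,n-1\}$, constants $a_i \in P$ for $i \notin K'$, shifts $j_i \in \Z$ for $i \in K'$, and a threshold $a' \in P$ such that $Y' = \{(x_1,\dots,x_{n-1}) : \exists\, x > a' \text{ in } P,\ x_i = s^{j_i}x \text{ for } i \in K',\ x_i = a_i \text{ for } i \notin K'\}$.

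\smallskip

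\textbf{Handling the last coordinate.} Now I restrict $X$ over $Y'$ and analyze the top coordinate. By the $s$-cell definition, $X$ over $\pi(X)$ is either a graph $y = f(x)$ or a band $f(x) < y < g(x)$, where $f, g$ are each of the form $x \mapsto b$ (constant in $P \cup \{\infty\}$) or $x \mapsto s^k(\pi_\ell(x))$ for some $\ell \le n-1$, $k \in \Z$. In the graph case with $f \equiv b$ constant: if $b \in P$ set $K = K'$ and $a_n = b$; done (the running parameter is inherited from $Y'$). In the graph case with $f(x) = s^k(\pi_\ell(x))$: if $\ell \in K'$ then $\pi_\ell(x) = s^{j_\ell}x$ along $Y'$, so $x_n = s^{k+j_\ell}x$; put $K = K' \cup \{n\}$, $j_n = k + j_\ell$. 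If $\ell \notin K'$ then $\pi_\ell(x) = a_\ell$ is constant, so $x_n = s^k(a_\ell)$ is constant; put $K = K'$, $a_n = s^k(a_\ell)$. In the band case $f(x) < y < g(x)$, I need to select a \emph{definable section}: since $\#(P \cap (f(x),g(x)))$ is unbounded over $\pi(X)$, in particular it is positive on a cofinal subset, and I pick, say, $y = s(f(x))$ (or $y = s^2(f(x))$ to be safe if $f$ could equal $g$'s predecessor — but unboundedness of the gap rules this out for large $x$), reducing to the graph case already treated after possibly enlarging $a'$ to $a$. In every case the resulting $Y$ has exactly the required form.

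\smallskip

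\textbf{Main obstacle.} The genuinely delicate point is the band case: I must produce a subset $Y \subseteq X$ that is itself a clean diagonal, which forces me to choose a single-valued section of the band, and I must ensure that section is expressible in the allowed form $s^{j_n}(\text{running parameter})$ or a constant. Picking $y = s(f(x))$ works precisely because $f(x)$ is of the permitted shape and the gap being unbounded guarantees $s(f(x)) < g(x)$ for all sufficiently large $x$ — so I must shrink the parameter range to $x > a$ for a suitable $a \ge a'$, and verify that this shrinkage keeps the set infinite, which it does since $P \cap (a', \infty)$ is infinite. A secondary bookkeeping issue is that after freezing coordinates (the case $\ell \notin K'$), one must check the constants genuinely lie in $P$ rather than being $\infty$; this is automatic since $X$ is infinite and nonempty, so the relevant values are realized in $P$. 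Once these points are checked, collecting $K$, the $a_i$, the $j_i$, and $a$ gives the claimed $Y$.
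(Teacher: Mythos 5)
Your overall strategy --- induction on $n$, applying the inductive hypothesis to the projection $\pi(X)$, and then lifting the diagonal through the last coordinate by cases (constant graph, shifted-coordinate graph, band sectioned by $y=s(f(x))$) --- is exactly the route the paper takes, and your case analysis for the top coordinate matches the paper's.

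There is, however, one step that fails as stated. At the start of the inductive step you assert that every fiber of $X$ over $\pi(X)$ is ``a singleton or a bounded interval of $P$'' and conclude that $\pi(X)$ must be infinite. This is false: in the definition of an $s$-cell the upper function $g$ is allowed to be the constant $\infty$, so a fiber can be an entire tail $\{y\in P:\ y>f(x)\}$, which is infinite. Hence an infinite $s$-cell can have a finite (necessarily singleton) projection, e.g.\ $X=\{c\}\times\{y\in P:\ y>b\}$. The paper treats this as a separate case: if $\pi(X)$ is finite it is a singleton, and then $X$ itself already has the required form with $K=\{n\}$ and $j_n=0$. The omission is easily repaired, since the conclusion still holds in that case, but your stated reason for excluding it is incorrect, so the case must be added explicitly. (A smaller point, which the paper's own write-up also glosses over: in the band case one should verify that $s(f(z))<g(z)$ for \emph{all} sufficiently large values of the running parameter along the chosen diagonal, not merely cofinally; since along the diagonal $f$ and $g$ are each constant or of the form $s^m(z)$, this is a short check, but it deserves a sentence.)
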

\begin{proof}
We do an induction on $n$. For $n=1$, since $X$ is infinite the result follows from the definition of an s-cell. We assume the result holds for $n$ and that $X\subseteq P^{n+1}$. For $\pi$ the projection on the first $n$-coordinates, if $\pi(X)$ is finite then it is a singleton $\{a\}$ and since $X$ is infinite, $X=\{(a,y):\; y>b\}$ for some $b\in P$ and we have the result.

 \par If $\pi(X)$ is infinite then we apply the induction hypothesis to $\pi(X)$ to get $X_1$ of the wanted form. It is easy to see that there are some functions  $f,g$ definable in $\la P,<\ra$ of the form $x\mapsto s^k (\pi_i(x))$ or constant so that $\pi^{-1}(X_1)\cap X$ either has the form  $$\{(x,y):\;  \text{there is $z>a\in P$ for $i\in K$ $x_i=s^{j_i}(z)$, for $i\notin K$, $x_i=a_i$ and $y=f(z)$}\}$$
 that is of the right form, or has the form $$\{(x,y):\;  \text{there is $z>a\in P$ for $i\in K$ $x_i=s^{j_i}(z)$, for $i\notin K$, $x_i=a_i$ and $g(z)>y>f(z)$}\}.$$
 Then $$\{(x,y):\;  \text{there is $z>a\in P$ for $i\in K$ $x_i=s^{j_i}(z)$, and for $i\notin K$, $x_i=a_i$ and $y=s( f(z))$}\}\subseteq X$$ and has the right form.
\end{proof}
\begin{corollary}\label{loc o-min}The structures ($1$)-($2$) of Example \ref{ex} are locally o-minimal.

\end{corollary}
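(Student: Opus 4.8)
The plan is to derive Corollary~\ref{loc o-min} from the structural results just established, namely Lemma~\ref{R1Z dmin}, Proposition~\ref{Pind}, Fact~\ref{Fact alpha}, and Lemma~\ref{s-c dim 1}. By Lemma~\ref{R1Z dmin}, it suffices to show that $\la\R,<,+,\Lambda(\cal R),P\ra$ is locally o-minimal when $P$ is $\Z$ (with $\Lambda(\cal R)=\Q$) or an iteration sequence. For (1), $\la\R,<,+,\Q,\Z\ra$ is interdefinable with $\la\R,<,+,\Z\ra$ (rational scalars are definable from $+$ alone up to parameters, or rather: any $\Q$-linear map is definable in $\la\R,<,+\ra$), so Fact~\ref{R0Z dmin} gives the claim and Lemma~\ref{R1Z dmin} finishes it. The real work is case (2).

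For case (2), by Remark~\ref{rmk alpha^N} an iteration sequence for $\cal R$ is, after adding one scalar $x\mapsto\beta x$ to the base structure, of the form $(a^n)_n$ for some $a>1$ in $\Lambda(\cal R)$. Absorbing $\beta$ into $\cal R$, I would work with $P=a^\N$ (or more generally $a^\Z$) and must show $\la\R,<,+,\Lambda(\cal R),a^\Z\ra$ is locally o-minimal, i.e.\ every bounded definable subset of $\R$ of dimension $0$ is finite. First I would reduce, exactly as in the proof of Lemma~\ref{R1Z dmin}, via Fact~\ref{FM} and Fact~\ref{str function}: a bounded definable $X\subseteq\R$ of dimension $0$ is contained in $\cl{g(P^k)}$ for some linear $g:\R^k\to\R$ definable over $\Lambda(\cal R)$, up to a bounded perturbation; so it suffices to bound $g(P^k)\cap B$ for $B$ bounded. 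Writing $g(x)=\sum_i q_i x_i$ with $q_i\in\Lambda(\cal R)$, a point of $g(P^k)$ has the form $\sum_i q_i a^{n_i}$. The point is that the structure induced on $P$ is just $\la P,<\ra$ (Proposition~\ref{Pind}), so the only definable relations among the exponents $n_i$ are order relations; combined with the decomposition of definable subsets of $P^k$ into s-cells (Fact~\ref{Fact alpha}) and the explicit form of infinite s-cells (Lemma~\ref{s-c dim 1}), any infinite definable family of tuples $(a^{n_1},\dots,a^{n_k})$ contains one where the $n_i$ all equal $n+c_i$ for fixed shifts $c_i$ and $n\to\infty$. Then $g(a^{n+c_1},\dots,a^{n+c_k})=a^n\sum_i q_i a^{c_i}$, which is either identically $0$ or tends to $\pm\infty$ as $n\to\infty$; either way it meets the bounded set $B$ only finitely often. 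Hence $g(P^k)\cap B$ is finite, $X$ is finite, and local o-minimality follows; Lemma~\ref{R1Z dmin} then transfers this to $\la\cal R,P\ra$.

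I expect the main obstacle to be the bookkeeping in the last step: one needs that a definable subset of $P^k$ whose image under $g$ meets a bounded interval in an infinite set must contain an infinite s-cell (via Fact~\ref{Fact alpha}, since a finite union of finite s-cells is finite), and then Lemma~\ref{s-c dim 1} pins down such a cell up to the finitely many shift/constant parameters $c_i\in\Z$ and base points; one must check that on each such cell $g$ takes the form $a^n c$ with constant $c\in\Lambda(\cal R)$, and that $c=0$ forces the image to be a single point while $c\neq0$ forces the image to be unbounded and discrete away from $0$. Handling the coordinates $i\notin K$ (where $x_i$ is a fixed element $a_i$ of $P$) and the two-sided case $P=a^\Z$ versus $P=a^\N$ is routine but must be stated carefully. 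Once this is in place, the passage from $\la\R,<,+,\Lambda(\cal R),P\ra$ to $\la\cal R,P\ra$ is immediate from Lemma~\ref{R1Z dmin}, completing the proof.
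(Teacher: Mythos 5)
Your overall frame (reduce via Lemma \ref{R1Z dmin}, dispose of (1) by Fact \ref{R0Z dmin}, and attack (2) through Proposition \ref{Pind}, Fact \ref{Fact alpha} and Lemma \ref{s-c dim 1}) matches the paper, but there are two genuine gaps in case (2). First, your reduction via Remark \ref{rmk alpha^N} to $P=a^{\N}$ with $a\in\Lambda(\cal R)$ only covers the special case where $P$ is an iteration sequence for the semibounded structure $\cal R$ itself. Example \ref{ex}(2) allows $P$ to be an iteration sequence for an arbitrary o-minimal expansion $\cal R'$ of $\cal R$ (the Remark explicitly lists $\alpha^\N$ with $\alpha\notin\Lambda(\cal R)$ and iteration sequences for $\overline{\R}$, e.g.\ $(2^{2^n})_n$). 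For such $P$ the successor map on $P$ is \emph{not} multiplication by a scalar in $\Lambda(\cal R)$, so your key computation $g(a^{n+c_1},\dots,a^{n+c_k})=a^n\sum_i q_i a^{c_i}$ is unavailable; one must instead argue via growth/domination that $\sum_i q_i\,s^{j_i}(p)$ is either constant or tends to $\pm\infty$ along $P$, which is where the hypothesis that $P$ is an iteration sequence (rather than an arbitrary geometric-looking set) actually enters.

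Second, the concluding step does not close as stated. Lemma \ref{s-c dim 1} only extracts a one-parameter ``diagonal'' subfamily $Y$ of an infinite s-cell; showing that $g(Y)$ is a singleton or unbounded says nothing about $g$ on the rest of that s-cell (which may have several free parameters), so ``hence $g(P^k)\cap B$ is finite'' does not follow — in the degenerate case $c=0$ you have only bounded one point of the image. The paper circumvents exactly this: it first uses d-minimality of $\la\cal R',P\ra$ (Miller–Tyne) and an induction on the number of discrete pieces to extract a \emph{definable} decreasing sequence $Y$ in the bad set converging to $0$, then uses definable choice to pass to a section $\rho$ of $f$ over $Y$, so that $f$ is injective on $Z=\rho(Y)$ and every infinite definable subset of $Z$ — in particular the diagonal family from Lemma \ref{s-c dim 1} — has infinite image accumulating at $0$; this is what turns ``the image is a point or is closed, discrete and unbounded'' into a contradiction. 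Your sketch has no analogue of the injectivity/section step, and without it the case analysis on the diagonal family is not sufficient. Both gaps are repairable, but they require the two ingredients above rather than bookkeeping.
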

\begin{proof}
By Lemma \ref{R1Z dmin}, we just have to show that $\la\cal R,P\ra$ are locally o-minimal for $\cal R$ a pure linear structure. For ($1$), it is by
Fact \ref{R0Z dmin}.
For ($2$), towards a contradiction, we assume that  there is a definable set $X\subset \R$ of dimension $0$ so that $0$ is an accumulation point of $X$. By Fact \ref{FM}, there is a linear function $f:\R^k\to \R$ so that $X\subseteq \cl{f(P^k)}$.
\begin{claim}\label{cl1}
There is a decreasing sequence $(x_n)_n\in X^\N$ so that $\lim_n(x_n)=0$ and $\{x_n:\; n\in \N\}$ is definable.
\end{claim}
\begin{proof}By \cite{MT}, $\la\Cal R',P\ra$ is d-minimal and, as a reduct, $ \la\cal R,P\ra $ is d-minimal too. Therefore,
we can do an induction on the number $n$ of discrete sets needed to decompose $Y=\{x\in \cl{X}:\; x>0\}$. Without loss of generality, we may  assume that: $$ \text{ (*) the number of discrete sets needed to decompose   $B\cap Y$ for any boxes $B\ni 0$ is $n$}.$$  If  $n=1$, since $\cl{X}$ is closed, then $Y$ is the range of a sequence that goes to $0$.
\par We assume that the result holds for $n$ and prove it for $n+1$. Let $Y'=\{x\in Y:\; \text{$x$ is not isolated in $Y$}\}$. By a simple induction, one can show that $Y'$ admits a decomposition into $n$ discrete sets. By definition $Y'$ is closed in its convex hull  and by (*), $0\in \cl{Y'}$. We apply the induction hypothesis to $Y'$ to get the result.
 \end{proof}

 Let $Y=\{x_n:\; n\in \N\}$ be the set produced by Claim \ref{cl1}. Note that d-minimal structures have definable choice.
For $n\in \N$ let $$\text{$y_n$ be a choice among $f(P^k)\cap [x_{n+1},x_n)$ if it exists and $y_n=y_{n-1}$ otherwise}\}.$$
By replacing $Y$ by $Y'=\{y_n:\; n\in \N\}$, we may assume that $Y\subseteq f(P^k)$. Let $\rho:Y\to P^k$ be a choice function so that $\rho(y)\in f^{-1}(y)$. Let $Z=\rho(Y)$. Since $Y$ is infinite, $Z\subseteq P^k$ is too and by Fact \ref{Fact alpha}, it contains an infinite s-cell. By Lemma \ref{s-c dim 1}, there is $Z'\subseteq Z$ so that $Z'$ has the form  $$\{(x_1,\ldots,x_n):\; \text{there is $x>a\in P$ for $i\in I$ $x_i=s^{j_i}(x)$, and for $i\notin I$, $x_i=a_i$}\}.$$
Thus $f(Z')=\mu(P^{>a})$ for some limear function $\mu$ and $f(Z')$ is closed and discrete. Moreover, since $Z'$ is infinite, that $f(Z')\subseteq f(Z)=Y$ and that $f_{\res Z'}$ is an embedding, $f(Z') $ is the range of a decreasing sequence with limit $0$. That is a contradiction.

 \end{proof}
\subsection{d-minimality of any expansion of  a linear structure by any sequence with limit $0$}

\begin{theorem}\label{T1}
We assume that $P\subseteq \R^{>0}$ is discrete and closed in its convex hull (thus, it has at most two accumulation points, and we assume one to be $0$ and $P>0$). Moreover, we assume that $\la\cal R,P^{>1}\ra$ is locally o-minimal. Then $\WR^\#$ is d-minimal.
\end{theorem}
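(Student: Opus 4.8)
The plan is to reduce the claim, as is done throughout the paper, to controlling sets of dimension $0$ definable in $\WR^\#$ by using Fact~\ref{FM}. Since $P$ is closed, discrete, and countable it is certainly sparse, so given any definable family $\{X_t : t\in A\}$ of subsets of $\R$ with $\dim X_t = 0$ for all $t$, Fact~\ref{FM} produces a single function $f:\R^{m+1}\to\R$ definable in $\cal R$ with $X_t\subseteq\cl{f(P^m\times\{t\})}$ for every $t$. Thus it suffices to produce a uniform bound $N$ on the number of discrete pieces needed to decompose $f(P^m\times\{t\})$ for a fixed $\cal R$-definable $f$, uniformly in the parameter $t$; after that one handles the closure by the usual trick that taking closure of a set that is a union of $N$ discrete sets at worst doubles the count (passing to the derived set as in Claim~\ref{cl1}). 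So from now on the target is: for an $\cal R$-definable family of sets of the form $f(P^m\times\{t\})$, there is a uniform $N$.

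Next I would reduce the parameter away and reduce to the positive case. By Fact~\ref{str function} applied to $f$ (in the variables ranging over $P$, with $t$ as an extra parameter), there is a bounded interval $C$ and a linear $\cal R$-definable $\lambda$ in all the variables so that $f(x,t)\in\lambda(x,t)+C$ for all $x\in P^m$; absorbing the $t$-dependence and translating, one is reduced to understanding $\mu(P^m)\cap B$ for a fixed linear map $\mu$ definable in $\cal R$ and $B$ a bounded box, together with the behaviour of $\mu(P^m)$ near $0$ and near $\sup P$. Since $P = P^{>1}\cup (P\cap(0,1])$ splits into two pieces, one of which ($P^{>1}$, reindexed) accumulates only at a point of $\R\cup\{+\infty\}$ that is controlled by the hypothesis of local o-minimality of $\la\cal R,P^{>1}\ra$, and the other ($P\cap(0,1]$) is the range of a decreasing sequence with limit $0$; and $\mu(P^m)$ is a finite union of translates of sums $\sum_i a_i P^{\pm 1}$ where the $a_i$ lie in $\Lambda(\cal R)$, I would reduce to the single-variable case exactly as in the proof of Corollary~\ref{loc o-min}: the structure induced on $P$ is just $\la P,<\ra$ (Proposition~\ref{Pind} does not apply verbatim here since $P$ need not be an iteration sequence, but the relevant input is local o-minimality of the linear reduct together with Fact~\ref{FM}, mirroring Lemma~\ref{R1Z dmin}), so any definable subset of $P^m$ that we need to project is a finite union of $s$-cells, and by Lemma~\ref{s-c dim 1} each infinite $s$-cell maps under a linear function to a discrete set — with the crucial point that the number of $s$-cells in the decomposition is \emph{uniformly bounded} over $t$ by Fact~\ref{Fact alpha} applied to the $\cal R$-definable, hence $\la P,<\ra$-definable, family.

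Putting this together: $f(P^m\times\{t\})\cap B$ for a fixed bounded box $B$ is a finite union, with uniformly bounded cardinality of pieces, of sets $\nu(P^{>a})\cap B$ for linear $\nu$, and each such set is finite with cardinality bounded using local o-minimality of $\la\cal R,P^{>1}\ra$ on the $P^{>1}$ part and the fact that a decreasing sequence with limit $0$ meets any $B$ bounded away from $0$ finitely (and meets a neighbourhood of $0$ in a set that is itself the range of a sequence, hence one discrete set); the region near $\sup P$ is handled by the local o-minimality hypothesis directly. Reassembling over the finitely many cones from Fact~\ref{ed-sbd} / the finitely many $s$-cells gives the uniform bound $N$, and hence d-minimality of $\WR^\#$ (noting that d-minimality of $\WR^\#$ is the same as d-minimality of $\WR$ by the cited result of \cite{FM}). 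The main obstacle, and the step that needs genuine care rather than bookkeeping, is establishing the uniformity: Fact~\ref{FM} gives a single $f$ but the $s$-cell decomposition must be taken in an $\cal R$-definable — equivalently $\la P,<\ra$-definable — family so that the number of pieces does not grow with $t$, and one must check that the reduction through Fact~\ref{str function} and through the cone decomposition can be performed definably-in-$\cal R$ and uniformly, rather than one parameter at a time; this is where the argument differs from, and is more delicate than, the already-established local o-minimality statements.
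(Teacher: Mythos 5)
Your opening reduction (sparseness of $P$, Fact~\ref{FM}, Fact~\ref{str function}, splitting $P$ into $P^{>1}$ and $P\cap(0,1]$, and removing the parameter $t$ by translation) matches the paper's proof. But the core of your argument does not go through. You propose to decompose definable subsets of $P^m$ into $s$-cells and then apply Lemma~\ref{s-c dim 1}; this machinery rests on Proposition~\ref{Pind}, i.e.\ on the structure induced on $P$ being $\la P,<\ra$, which is established only for iteration sequences. For the $P$ of Theorem~\ref{T1} --- an arbitrary discrete set, closed in its convex hull, accumulating at $0$, with $\la\cal R,P^{>1}\ra$ locally o-minimal --- no such statement is available, and you acknowledge the issue without repairing it. Even granting an $s$-cell decomposition, Lemma~\ref{s-c dim 1} only \emph{extracts} a diagonal subset $Y\subseteq X$ of an infinite $s$-cell; it does not partition $X$ into such sets, so you cannot conclude that $f(P^m)$ is a finite union of one-variable images $\nu(P^{>a})$. (That extraction sufficed in Corollary~\ref{loc o-min} because there one only needed one infinite well-behaved subset to reach a contradiction; here you must cover all of $f(P^m)$.)

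The step you are missing is the paper's induction on the number of variables $m$, which is where the accumulation at $0$ is actually dealt with. Writing $f$ as $x\mapsto\sum_i\lambda_i x_i$ and $f_A$ for its restriction to the coordinates in $A$, the paper shows for $Z=P^m\cap[0,1]^m$ that every non-isolated point of $\cl{f(Z)}$ already lies in $\bigcup_{A\subsetneq\{1,\ldots,m\}}\cl{f_A(P^{|A|})}$: along any approximating sequence with no coordinate eventually constant, some coordinate is strictly decreasing, hence tends to $0$, and its contribution $\lambda_i x_{i,s}$ vanishes in the limit. Thus $\cl{f(Z)}$ minus the inductively controlled union over proper subtuples is discrete, and this is exactly what produces the uniform bound on the number of discrete pieces. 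The mixed products $(P^{>1})^l\times(P^{<1})^n$ are then handled by using local o-minimality of $\la\cal R,P^{>1}\ra$ to identify the closure with $\cl{f_{K'}((P^{<1})^n)}+f_K((P^{>1})^l)$. Your sketch never confronts the fact that for $m\ge 2$ the set $f((P^{<1})^m)$ can accumulate on a set that itself needs several discrete pieces (e.g.\ $P+P$ for $P=\{1/n:n\in\N\}$ accumulates on $P\cup\{0\}$); the induction over subtuples is precisely the device that controls this, and no combination of the $s$-cell lemmas substitutes for it.
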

\begin{proof}
By \cite{FM}, it is sufficient to show that $\WR$ is d-minimal.
Since $P$ is countable, it is sparse and by Fact \ref{FM}, we just need to show that, for $f:\R^{n+m}\to \R$ definable in \cal R, there are $g_i:\R^{n_i+n}\to \R$ some  functions definable in \Cal R so that for every $a\in \R^n$, $\cl{f(a,P^m)}$ is the union of  finitely many discrete sets. In the case where $P\cap [0,1]$ is finite then $\WR$ is locally o-minimal and the result follows. Thus, we may assume that $P\cap [0,1]$ is the range of a decreasing sequence $(x_n)$ with limit $0$.
\par Since $f$ is definable in \cal R, it has the form $x\mapsto \sum_i\lambda_i x_i+b$. Since for $a$, $a'\in\R^n $, $f(a,P^k)$ and $f(a',P^k)$ are translate of each others we may assume that $n=b=0$.  
\par We proceed by induction on $m$. For $m=0$, the result is trivial. We assume that the result holds for $m<k$ and prove it for $k$. We first consider the image under $f$ of $Z=(P^m)\cap [0,1]^m $. Let $x$ be a non-isolated  point of $\cl{f(Z)}$ and $\big((x_{i,s})_{i\leq m}\big)_s$ be a sequence so that $$\lim_sf((x_{i,s})_i)=x$$ and that there is no subsequence that is ultimately constant in each coordinates. For $A\subseteq\{1,\ldots, k\}$, let $f_A:\R^{|A|}\to \R$, $x\mapsto \sum_{i}a_{j_i}x_i$, where $j_i$ is the $i$-th element of $A$. By induction, $$\bigcup_{A\subsetneq \{1,\ldots,k\}}\cl{f_A(P^{|A|})}\text{ is the union of finitely many discrete sets.}$$ We show that $x\in \bigcup_{A\subsetneq \{1,\ldots,k\}}\cl{f_A(P^{|A|})}$.
\par  By taking subsequences,  we may assume that for every $i$, $(x_{i,s})_i$ is monotone (constant or decreasing). By assumption they can not all be ultimately constant. Thus, there is $i$ so that $(x_{i,s})$ is strictly  decreasing: $( \lambda_ix_{i,s})_s$ goes to zero and $x\in \bigcup_{A\subsetneq \{1,\ldots,k\}}\cl{f_A(P^{|A|})}$. Therefore, $$\cl{f(Z)}\setminus \bigcup_{A\subsetneq \{1,\ldots,k\}}\cl{f_A(P^{|A|})} \text{ is discrete}$$
and we have d-minimality by induction.% Moreover, by discreteness, $$\cl{f(Z)}\setminus \bigcup_{A\subsetneq \{1,\ldots,k\}}\cl{f_A(P^{|A|})}\subseteq f(Z). $$
%and we have $P$-internality by induction.
\par Now, we consider the case  $\cl{f\big((P^{>1})^l\times(P^{<1})^n\big)}$ for some $l\neq 0$. Let $K=\{1,\ldots,l\}$, $K'=\{l+1,\ldots,k\}$. Since $f$ is linear, there is a box $B$ of finite radius so that $$f_{K'}((P^{<1})^n)\subseteq B.$$ Moreover, by local o-minimality of $\la\cal R,P^{>1}\ra$, for every $x\in \cl{f\big((P^{>1})^l\times(P^{<1})^n\big))}$, there are only finitely many $y\in (P^{>1})^l$ so that $x\in f_K(y)+B$ and thus $$x\in \cl{f_{K'}((P^{<1})^n)}+f_K(y)$$ 
for some $y\in (P^{>1})^l$. Therefore $$\cl{f\big((P^{>1})^l\times(P^{<1})^n\big))}=\cl{f_{K'}((P^{<1})^n)}+f_K((P^{>1})^l)$$ and we have the result by applying the induction hypothesis to $\cl{f_K((P^{<1})^l)}$.
\end{proof}
%\section{Proof of Proposition \ref{pre prop-examples}} 
%We are now ready to establish Proposition \ref{prop-examples}
%\begin{theorem}\label{pre 1.10}
%Let $\WR$ be any of the structures of Proposition \ref{pre prop-examples}. Then any smooth function $f$ with semialgebraic domain definable in $\WR$ is definable in \\$\la \R,<,+,\Lambda(\WR),\cdot_{\upharpoonright [0,1]^2}\ra$.
%\end{theorem}
%\begin{proof}
%For (1) and (2), by Corollary \ref{ex}, we have (DP) and (DIM). Thus we can apply  Theorem \ref{main2}  to get the result.
%\par For (3), since $\WR$ is a reduct of $\cal R'=\la\WR,(x\mapsto\alpha x)\ra$, we use (2) to get that $f$ is definable in $\la\cal R,\Lambda(\WR')\ra$. By definition of $\Lambda(\WR)$, we get that $f$ is definable in $\la \cal R, \Lambda(\WR)\ra$,
%the result.

%\par For (4), observe that $f$ is definable in $\la \overline \R, 2^\Z\ra$. Now, by Theorem \ref{main2}, $f$ is semialgebraic. On the other hand, by \cite{es},  $\la \overline \R, 2^\Z\ra$ satisfies  (DIM). Therefore, by Proposition \ref{sharp}, $\la  \cal R, 2^\Z,\cdot_{\upharpoonright (2^\Z)^2}\ra$ is semibounded, and hence so is its reduct $\la \cal R, f\ra$. But the latter structure is o-minimal and semibounded, and hence by \cite{mpp}, $f$ is definable in $\la \R,<,+,(x\mapsto \lambda x)_{\lambda\in\R} ,\cdot_{\upharpoonright [0,1]^2}\ra$. Moreover, by definition of $\Lambda(\WR)$, $f$ is definable in $\la \R,<,+,\Lambda(\WR),\cdot_{\upharpoonright [0,1]^2}\ra$.
 %\end{proof}

 \section{Proof of Proposition \ref{prop-examples}}\label{sec ex}

%\begin{remark}\label{rmk-PH}
 % Proposition \ref{prop-examples} for $\WR=\la \cal R, \Z\ra$, where  $\cal R=\la \R, <, +, \cdot_{\upharpoonright [0,1]^2}\ra$ could be settled already after Section \ref{sec-sa}. Indeed, by Theorem \ref{main1}, a smooth function $f$ definable in $\WR$ is semialgebraic.  If $f$ is not semibounded and definable, then $\la \cal R, f\ra =\overline \R$ (by \cite{mpp}), and hence $\la \cal R, \Z\ra=\la\overline \R, \Z\ra$ would define all sets in the  projective hierarchy, contradicting (DIM).
%\end{remark}
\subsection{Defining new linear functions}\label{sec lin}

 In this subsection, we expose some  preliminary results and questions related to the question:
\begin{question}\label{Q linear} Does $\WR$ defines a total linear function  that is not definable in $\cal R$?
\end{question}

\par \textbf{For the rest of this section, we assume that there is  $\lambda \in \R\setminus \Lambda(\cal R)$.}
\smallskip

\par Let us start a discussion about some restrictions that $\WR$ should satisfy in order to define $x\mapsto \lambda x$. First, a general lemma.
\begin{lemma}\label{Xt bd}
Let $\{g_t:\; t\in A\}$ be a family of functions  definable in \cal R such that for every $t\in A$, $$g_t=\big(x\mapsto \lambda x\big)\upharpoonright (0,x_t).$$
Then there is a uniform bound over the $x_t$'s.
\end{lemma}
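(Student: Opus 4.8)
The statement asserts that if we have a definable (in $\WR$) family $\{g_t : t\in A\}$ of restrictions of the single linear function $x\mapsto\lambda x$ to intervals $(0,x_t)$, then the lengths $x_t$ cannot be arbitrarily large. The idea is to argue by contradiction: suppose $\sup_t x_t = +\infty$. Then by taking unions along the parameter set — and here one wants to invoke (DIM)/noiselessness or Fact~\ref{DPI} applied to $\WR^\#$ — one can try to piece the $g_t$ together into (the graph of) the total function $x\mapsto\lambda x$, or at least a restriction of it to an unbounded interval $(0,\infty)$, which would already be definable in $\WR$. Once $x\mapsto\lambda x$ is definable on an unbounded interval, a standard group-homomorphism/translation argument (using that $\WR$ expands $\R_{gp}$, so we can translate the domain) upgrades it to a total linear function, contradicting $\lambda\notin\Lambda(\cal R)$ — but wait, $\WR$ is not assumed o-minimal, so one must be slightly careful: what we actually get is that $x\mapsto\lambda x$ is definable in $\WR$, and then one would want to quote the earlier structure theory (Theorem~\ref{main1} or Proposition~\ref{Z lin}, or directly Proposition~\ref{red sb} together with the fact that a $\cal C^\infty$, indeed linear, function definable in $\WR$ lies in $\la\cal R,\Lambda(\WR)\ra$) to conclude $\lambda\in\Lambda(\WR)\subseteq I$, and then separately that it is not in $\Lambda(\cal R)$ is the hypothesis — so the contradiction is with the structure of $\cal R$ only if we additionally know $\Lambda(\WR)=\Lambda(\cal R)$, which is presumably not yet available at this point of the paper. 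So the cleaner contradiction is simply: an unbounded restriction of $x\mapsto\lambda x$ being definable contradicts the hypothesis that $\lambda\notin\Lambda(\cal R)$ only if we can show such a function must be $\cal R$-definable, which is exactly what semiboundedness machinery gives.

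Concretely, the steps I would carry out are: (1) Assume for contradiction that $\{x_t : t\in A\}$ is unbounded. (2) Consider the set $G=\bigcup_{t\in A}\Gamma(g_t)\subseteq\R^2$; this is definable in $\WR$, and it equals $\{(x,\lambda x) : 0<x<\sup_t x_t\}$, i.e. the graph of $x\mapsto\lambda x$ restricted to $(0,L)$ where $L=\sup_t x_t\in(0,+\infty]$. (3) If $L=+\infty$, then $\WR$ defines $x\mapsto\lambda x$ on all of $(0,\infty)$, hence (by translation, using $+$ is in the language) the total function $x\mapsto\lambda x$. (4) Apply Theorem~\ref{main1} (or its purely linear refinement) to the definable $\cal C^\infty$ — indeed linear — function $x\mapsto\lambda x$: it must be definable in $\la\cal R,\Lambda(\WR)\ra$, so $\lambda\in\Lambda(\WR)$. (5) Combined with whatever is known about $\Lambda(\WR)$ relative to $\Lambda(\cal R)$ at this stage, derive the contradiction with $\lambda\notin\Lambda(\cal R)$. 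If instead one wants a more self-contained argument not relying on the heavy theorems, one can use Fact~\ref{FM} / sparseness of $P$ directly: the graph $G$ has no interior in $\R^2$, so by Fact~\ref{FM} there is an $\cal R$-definable $f:\R^m\to\R$ with $G\subseteq\cl{f(P^m)}$, and then, since $G$ is a half-line through the origin of slope $\lambda$ and has dimension $1$, Fact~\ref{str function} and the cone-decomposition Fact~\ref{ed-sbd} force $\lambda\in\Lambda(\cal R)$, a contradiction.

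The main obstacle, I expect, is handling the case $L<+\infty$: a priori the supremum of the $x_t$ need not be attained, so the union $G$ only gives $x\mapsto\lambda x$ on the \emph{open} interval $(0,L)$ with $L$ finite, and this does \emph{not} immediately produce a total linear function — a bounded-domain restriction of a linear function is always $\cal R$-definable, so there is no contradiction there. Thus the real content of the lemma is precisely to rule out $L=+\infty$, and the proof must show that an unbounded such family forces definability of the total function. I would therefore expect the author's proof to do exactly step (1)–(3) above and then invoke semiboundedness: a total linear function with slope $\lambda\notin\Lambda(\cal R)$ definable in $\WR$ is impossible, either by citing Theorem~\ref{main1}/\ref{main2} or, more likely at this point in the text, by a direct sparseness argument via Fact~\ref{FM} and the structure theory of Section~\ref{sec-prel} (Facts~\ref{ed-sbd}, \ref{str function}). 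A secondary technical point to be careful about is uniformity: "a uniform bound over the $x_t$'s" should be read as "$\sup_t x_t<\infty$", and one should make sure the family being indexed by $A\subseteq\WR$-definable parameters (rather than $\cal R$-definable ones) does not cause trouble — but since the union $G$ is taken over all $t\in A$ and is itself $\WR$-definable, this is not an issue.
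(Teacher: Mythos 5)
Your steps (1)--(3) are exactly the paper's (one-line) argument, but you have misread the hypothesis in a way that makes the rest of your proposal unnecessarily heavy and, in places, non-committal. The lemma assumes the family $\{g_t : t\in A\}$ is definable \emph{in $\cal R$}, not merely in $\WR$. Hence the union $G=\bigcup_{t\in A}\Gamma(g_t)$ is already $\cal R$-definable, and if $\sup_t x_t=+\infty$ then $G$ is the graph of $x\mapsto\lambda x$ on $(0,\infty)$, which (after the translation argument you describe, using $+$) makes the total function $x\mapsto\lambda x$ definable in $\cal R$ itself. That directly contradicts the standing assumption of the section that $\lambda\notin\Lambda(\cal R)$. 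This is precisely what the paper says: ``if there was not, $x\mapsto\lambda x$ would be definable in $\cal R$.''

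Because of the misreading, your steps (4)--(5) — invoking Theorem~\ref{main1}, worrying about whether $\Lambda(\WR)=\Lambda(\cal R)$ is ``available at this point of the paper'', and offering a fallback via Fact~\ref{FM} and sparseness — are all superfluous, and step (5) as you state it is genuinely inconclusive (you yourself flag that the needed comparison of $\Lambda(\WR)$ with $\Lambda(\cal R)$ may not be established yet). Had the family really been only $\WR$-definable, that would be a real gap in your argument; as the hypothesis stands, none of that machinery is needed. Your discussion of the case $L<+\infty$ is also moot: finiteness of $L$ \emph{is} the conclusion, so nothing needs to be ``handled'' there. One small correction in passing: a bounded-domain restriction of $x\mapsto\lambda x$ is not automatically $\cal R$-definable for arbitrary $\lambda$; here it is $\cal R$-definable by hypothesis.
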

\begin{proof}
If there was not, $x\mapsto \lambda x$ would be definable in \cal R.
 \end{proof}
\begin{proposition}\label{fn bd}We assume that  $f:x\mapsto \lambda x$ is definable in $\WR$. Let $\{f_t:(a_t,b_t)\to \R:\;t\in S\}\subseteq \{f_t:\; t\in A\}$ be the families of functions given by DP(I) applied to $f$ (the second one being definable in \cal R). Then there is a uniform bound over $|\dom(f_t)|$ for $t\in S$.
\end{proposition}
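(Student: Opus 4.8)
The plan is to argue by contradiction: suppose that the domains $|\dom(f_t)| = |(a_t,b_t)|$ are unbounded as $t$ ranges over $S$. Since $f = (x\mapsto \lambda x)$ is globally linear with slope $\lambda$, and since $f = \bigcup_{t\in S} f_t$ where each $f_t$ is a restriction (to the interval $(a_t,b_t)$) of a function definable in $\cal R$, each $f_t$ must agree with $x\mapsto \lambda x$ on its domain. So for each $t\in S$, the set $g_t := f_t - (a_t\text{-shift})$ is, up to a translation by $(a_t, \lambda a_t)$, the graph of $(x\mapsto \lambda x)\upharpoonright (0, b_t - a_t)$. Thus I would produce from $\{f_t : t\in S\}$ a family of functions definable in $\cal R$ that are exactly restrictions of $x\mapsto \lambda x$ to initial intervals $(0, x_t)$ with $x_t = b_t - a_t$, and then invoke Lemma \ref{Xt bd}, which says such a family must have a uniform bound on the $x_t$'s — contradicting unboundedness of $|\dom(f_t)|$.

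The one subtlety to address carefully is that the family $\{f_t : t\in A\}$ given by (DP)(I) is definable in $\cal R$ and indexed by a parameter $t$ ranging over a set $A\subseteq R^m$, with $S\subseteq A$ merely a set of dimension $0$ (small) picking out which members actually occur in the decomposition $f = \bigcup_{t\in S} f_t$. The translation data $a_t$ (the left endpoint of $\dom(f_t)$) is recovered from $f_t$ uniformly in $t$, hence is definable in $\cal R$ as a function of $t\in A$; likewise the translated family $t\mapsto \bigl((x\mapsto \lambda x)\upharpoonright (0, b_t - a_t)\bigr)$ — wait, this last family is manifestly not definable in $\cal R$ as stated, since $x\mapsto \lambda x$ is not. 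Instead I should say: for $t\in S$, since $f_t \subseteq \Gamma(f) = \Gamma(x\mapsto\lambda x)$, the graph of $f_t$ is $\{(x,\lambda x) : a_t < x < b_t\}$, which is the graph of $x\mapsto \lambda x$ shifted; equivalently $\{(x - a_t, \lambda x - \lambda a_t) : a_t < x < b_t\} = \Gamma\bigl((x\mapsto \lambda x)\upharpoonright(0, b_t-a_t)\bigr)$. The family $\{f_t - (\text{shift by }(a_t,\lambda a_t)) : t\in A\}$ is obtained from $\{f_t : t\in A\}$ by a translation whose amount depends definably (in $\cal R$) on $t\in A$ — but the shift in the second coordinate is by $\lambda a_t$, which again is not $\cal R$-definable. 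To dodge this, I would not shift vertically: instead observe directly that $\{f_t\circ(x\mapsto x+a_t) : t\in A\}$, i.e. the family of horizontal translates $h_t : x \mapsto f_t(x+a_t)$ on domain $(0, b_t-a_t)$, is definable in $\cal R$ (the $a_t$ being $\cal R$-definable in $t$), and for $t\in S$ we have $h_t(x) = \lambda(x+a_t) = \lambda x + \lambda a_t$, an affine function with slope $\lambda$. Then $h_t - h_t(0^+)$, i.e. $x\mapsto h_t(x) - \lim_{x\to 0^+}h_t(x)$, is $(x\mapsto\lambda x)\upharpoonright(0,b_t-a_t)$ and this last operation is $\cal R$-definable uniformly. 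So the family $\{g_t : t\in A\}$ with $g_t : (0, b_t-a_t)\to R$, $g_t(x) = h_t(x) - \inf_{y}h_t(y)$ (or $\sup$, depending on sign of $\lambda$), is definable in $\cal R$, and for $t\in S$ it equals $(x\mapsto\lambda x)\upharpoonright(0,b_t-a_t)$. Now Lemma \ref{Xt bd} applied to the subfamily $\{g_t : t\in S\}$ gives a uniform bound on $b_t - a_t = |\dom(f_t)|$, the desired contradiction.

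Concretely, the steps in order: (i) apply (DP)(I) to $f = (x\mapsto\lambda x)$ to get the $\cal R$-definable family $\{f_t : t\in A\}$ and small $S\subseteq A$ with $f = \bigcup_{t\in S}f_t$; since each $f_t\subseteq \Gamma(f)$, each $f_t$ for $t\in S$ is (the graph of) a restriction of $x\mapsto\lambda x$ to an interval $(a_t,b_t)$. (ii) Note $a_t$ is recovered $\cal R$-definably from $f_t$ uniformly in $t\in A$ (as $\inf\dom(f_t)$, which exists since the $f_t$ are $\cal R$-definable — one may shrink $A$ to those $t$ for which $f_t$ is the graph of a function on a bounded interval), and build the $\cal R$-definable family $g_t := \bigl(x\mapsto f_t(x+a_t) - c_t\bigr)$ on $(0,b_t-a_t)$, where $c_t$ is the appropriate endpoint value making $g_t(0^+)=0$. (iii) For $t\in S$, $g_t = (x\mapsto\lambda x)\upharpoonright(0,b_t-a_t)$. (iv) Apply Lemma \ref{Xt bd} to $\{g_t : t\in S\}$ to get $\sup_{t\in S}(b_t-a_t) < \infty$, i.e. a uniform bound on $|\dom(f_t)|$.

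The main obstacle is really just the bookkeeping in step (ii): making sure the re-centering of the $f_t$ is done using only $\cal R$-definable data (horizontal shift by $a_t$ and vertical shift by an endpoint-limit of the already-shifted function, both $\cal R$-definable in $t$), rather than accidentally smuggling in the slope $\lambda$. Once the family $\{g_t : t\in S\}$ is seen to be an $\cal R$-definable family all of whose members are initial restrictions of $x\mapsto\lambda x$, Lemma \ref{Xt bd} closes the argument immediately. I expect no further difficulty; definable choice is not needed since the $a_t$ and $c_t$ are canonically defined (infima/suprema) rather than chosen.
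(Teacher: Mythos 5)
Your proposal is correct and follows essentially the same route as the paper: the paper likewise reduces to the family $g_t:x\mapsto f_t(x+a_t)-f_t(a_t)$ on $(0,b_t-a_t)$ and then invokes Lemma \ref{Xt bd}. Your extra care about performing the re-centering with only $\cal R$-definable data (horizontal shift by $a_t$, vertical shift by the endpoint limit rather than by $\lambda a_t$) is exactly the right bookkeeping and is implicit in the paper's one-line construction.
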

\begin{proof}
First of all, we may assume that for every $t\in A$, $f_t$ is linear and that $f_t$ is some restriction of $x\mapsto \lambda x$. We define $g_t:(0,b_t-a_t)$ by $$g_t:x\mapsto f_t(x+a_t)-f_t(a_t).$$
We have the result by applying Lemma \ref{Xt bd}.
\end{proof}
The following proposition shows that more can be said in the d-minimal setting. But first, we need a lemma.
\begin{lemma}\label{unbd} Let $P\subseteq \R$, be a set of dimension $0$ so that $\la\cl{\R},P\ra$ is d-minimal and let $A\subseteq \R$ be a definable unbounded set of dimension $0$. Then the set of the size of complementary intervals of $A$ is unbounded.
\end{lemma}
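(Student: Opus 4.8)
The plan is to argue by contradiction. Suppose there is $M>0$ such that every complementary interval of $A$ has length at most $M$. Since $A$ is definable, it is definable in $\la\cl{\R},P\ra^\#$, which is d-minimal (by \cite{FM}, as $\la\cl{\R},P\ra$ is) and therefore noiseless; as $\dim A=0$ the set $A$ has empty interior, hence is nowhere dense (in particular it does have complementary intervals). After replacing $A$ by its image under $x\mapsto -x$ if necessary, we may assume $A$ is unbounded above; fix $a_0\in A$ with $a_0>0$ and put $A'=A\cap[a_0,+\infty)$. Then $A'$ is definable, has dimension $0$, is unbounded above, and — being a finite union of discrete subsets of $\R$ by d-minimality — is countable.

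The first step is to check that $A'$ is $M$-dense in $[a_0,+\infty)$, i.e.\ $\inf_{a\in A'}|x-a|\le M$ for every $x\ge a_0$. This is trivial for $x\in A$. If $x\ge a_0$ and $x\notin A$ then $x>a_0$, and the maximal open interval $(L,R)$ containing $x$ and disjoint from $A$ has $L=\sup\{a\in A:a<x\}\ge a_0$ (because $a_0\in A$) and $R=\inf\{a\in A:a>x\}<+\infty$ (because $A$ is unbounded above); hence $(L,R)$ is a complementary interval of $A$, so $R-L\le M$, and since $A$ contains points arbitrarily close to $L$ from below which lie in $A'$, we get $\inf_{a\in A'}|x-a|\le x-L\le M$.

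The second step plays this off against noiselessness via the ratio set $D=\{a/b:a,b\in A'\}\subseteq\R^{>0}$, which is definable in $\la\cl{\R},P\ra^\#$ since multiplication is available in $\cl{\R}$. On one hand $D$ is dense in $\R^{>0}$: given $r>0$ and $\varepsilon>0$, choose $b\in A'$ so large that $rb\ge a_0$ and $(M+1)/b<\varepsilon$; by $M$-density of $A'$ there is $a\in A'$ with $|a-rb|<M+1$, so $|a/b-r|<\varepsilon$. On the other hand $D$ is countable (as $A'$ is), hence has empty interior. Thus $D$ is a definable subset of $\R$ that neither has interior nor is nowhere dense, contradicting noiselessness of $\la\cl{\R},P\ra^\#$. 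This contradiction proves the lemma.

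I expect the only delicate point to be the first step: one must verify that the maximal $A$-free interval about a point $x\ge a_0$ is bounded and has left endpoint $\ge a_0$, so that the bound $M$ applies to it and yields control of the distance to $A'$ rather than merely to $A$ — this is exactly where $a_0\in A$ and the unboundedness of $A$ are used. Granting that, the density and countability of $D$ and the resulting violation of noiselessness are routine.
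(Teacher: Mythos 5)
Your proof is correct, and it reaches the conclusion by a genuinely different and more elementary route than the paper's. The paper proceeds by induction on the number $k$ of discrete sets needed to decompose $A$ (available by d-minimality): the inductive step passes from $A$ to $D=C\cup\big(B\sm\bigcup_{x\in C}[x-1,x+1]\big)$, where $B$ is the set of isolated points of $A$ and $C=A\sm B$, a set needing $k-1$ discrete pieces whose gap sizes control those of $A$; the base case ($A$ closed and discrete with bounded gaps) is settled by citing \cite{Hiero-AEG}, whose theorem forces $\la\cl{\R},A\ra$ to define $\Z$ and hence the projective hierarchy, contradicting d-minimality. You bypass both the induction and the external theorem: bounded gaps plus unboundedness make the ratio set $A'/A'$ dense in $\R^{>0}$ with no need to first reduce to a discrete set, and the countability of $A'$ (a finite union of discrete sets) already yields the only consequence of Hieronymi's theorem that is actually needed, namely a somewhere dense definable set without interior, contradicting noiselessness. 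What the paper's route buys is the stronger intermediate fact (definability of $\Z$ from the closed discrete case); what yours buys is self-containedness and a shorter argument. The one point to make explicit --- which the paper's proof also uses silently --- is that $A$ must be read as definable in $\la\cl{\R},P\ra^\#$, so that the semialgebraic ratio set lives in a d-minimal structure; this is how the lemma is applied in Proposition \ref{no lin}, where $A=\cl{h(P'^n)}$ for an affine, hence semialgebraic, $h$.
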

\begin{proof}
Without loss of generality, we may assume that $A$ is closed. We do an induction on $k$, the number of discrete sets needed to decompose $A$. If $k=1$, then $A$ is closed and discrete and by \cite{Hiero-AEG}, $\WR$ defines the whole projective hierarchy. That is a contradiction with d-minimality.
\par We assume that the result holds for $m<k$. For $Y$ a set of dimension $0$, we denote by $X_Y$ the set of complementary intervals of $Y$ and we denote by $|X_Y|$ the set of size of the complementary intervals of $Y$. Let  $B=\{x\in A:\; \text{$x$ is isolated in $A$}\}$ and $C=A\setminus B$. Note that $C$ is closed and $B$ is discrete. Note also that, using a simple induction, one can easily show that  $C$ admits a decomposition into $k-1$ discrete sets and that $C\subseteq \cl{B}$. Let $$D=C\cup \big( B\setminus \bigcup_{x\in C}[x-1,x+1]\big).$$
It is easy to see that, by construction,  $$ |X_D|+1\leq |X_A|$$ and thus,
if $|X_D|$ is unbounded then $|X_A|$ is unbounded too. Moreover, $C$ does admit a decomposition into $k-1$ discrete sets $C_1,\ldots, C_l$. By definition, for any $i$ $C_i\subseteq \cl{B}$ and thus, $C_i\cup \big(B  \setminus \bigcup_{x\in C}[x-1,x+1]\big)$ is discrete. Thus $D$ does admit a decomposition into $k-1$ discrete sets  and, by applying the induction hypothesis to $D$, we have that $|X_D|$ is unbounded. As said previously, it implies that $|X_A|$ is unbounded and we have the result.
\end{proof}
\begin{proposition}\label{no lin}
Let $P'\subseteq \R$ be a set of dimension $0$. We assume that  $\la\cl{\R},P'\ra$ is d-minimal. Then $x\mapsto \lambda x$ is not definable in $\la\cal R,P'\ra^\#$.
\end{proposition}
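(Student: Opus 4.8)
The plan is to argue by contradiction: suppose $x\mapsto\lambda x$ is definable in $\la\cal R,P'\ra^\#$. The key point to exploit is that a total linear function definable in $\WR=\la\cal R,P'\ra^\#$, when decomposed via the decomposition property (DP)(I) into a small union $\bigcup_{t\in S} f_t$ of $\cal R$-definable pieces, has pieces $f_t$ each of which must be a restriction of $x\mapsto\lambda x$ to some bounded interval, since $\lambda\notin\Lambda(\cal R)$ means no single $\cal R$-definable piece can be unbounded. By Proposition~\ref{fn bd}, the domains $\dom(f_t)$ have a uniform bound, say of length $\leq N$. Now I would look at the set $A$ of (say) left endpoints of these domains, or more precisely the set of points $a$ such that $(a, a+N')\cap\Gamma(f)$ is not entirely covered by a single $\cal R$-definable piece — in any case, a definable, unbounded, dimension-$0$ set $A\subseteq\R$ whose complementary intervals all have length bounded by a fixed constant (because the pieces have bounded domains but their union is all of $\R$, so consecutive "breakpoints" are at bounded distance).

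The contradiction then comes directly from Lemma~\ref{unbd}: since $\la\cl{\R},P'\ra$ is d-minimal, any definable unbounded set of dimension $0$ must have complementary intervals of unbounded size, contradicting the uniform bound just obtained. So the heart of the argument is: (i) reduce to the one-variable case and set up the decomposition $f=\bigcup_{t\in S}f_t$ with $\cal R$-definable, $\lambda$-linear, boundedly-long pieces (this uses (DP), which holds for $\WR^\#$ by the earlier results, together with Lemma~\ref{Xt bd} and Proposition~\ref{fn bd}); (ii) extract from the "seams" of this decomposition a definable, dimension-$0$, unbounded set $A$ with boundedly small complementary intervals; (iii) apply Lemma~\ref{unbd} to $A$ inside the d-minimal structure $\la\cl{\R},P'\ra$ to get a contradiction. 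Note $\la\cal R,P'\ra^\#$ is a reduct of (an expansion of) $\la\cl\R,P'\ra^\#$, or at least $A$ is definable there, so Lemma~\ref{unbd} applies — this compatibility of ambient structures is a small point to check carefully.

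The main obstacle I expect is step (ii): making precise, from the decomposition data given only abstractly by (DP)(I), a genuinely $\cal R^\#$- (or better, $\la\cl\R,P'\ra$-) definable unbounded dimension-$0$ set whose complementary intervals are uniformly bounded. One has to be careful that $S$ is a dimension-$0$ index set and that the map $t\mapsto(a_t,b_t)$ is $\cal R$-definable, so that $\{a_t : t\in S\}$ is definable and of dimension $0$; then one must verify unboundedness (which follows because $\dom(f)=\R$ while each $|\dom(f_t)|\leq N$) and the bounded-gap property (the union of intervals of length $\leq N$ covering $\R$ forces endpoints to be $\leq N$ apart after passing to a suitable "skeleton" of the intervals, e.g. by a greedy/successor selection using d-minimal definable choice, exactly as in the proof of Lemma~\ref{unbd}). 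Once $A$ is in hand with these two properties, the appeal to Lemma~\ref{unbd} is immediate and closes the proof.
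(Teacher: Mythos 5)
Your argument is correct, but it takes a genuinely different route from the paper's, even though both proofs bottom out in Lemma \ref{unbd}. The paper does not use (DP) or Proposition \ref{fn bd} here at all: it applies Fact \ref{FM} directly to get an $\cal R$-definable $f:\R^{n+1}\to\R$ with $\lambda x\in\overline{f(x,P'^n)}$ for all $x$, then uses Fact \ref{str function} to replace $f$ by an affine map $g(x,y)=\mu_1 x+h(y)$ up to a box $B$ of radius $\delta$; since $(\lambda-\mu_1)z$ must then lie in $h(P'^n)+B$ for every $z$, the set $h(P'^n)$ is $\delta$-dense in $\R$, and Lemma \ref{unbd} applied to $h(P'^n)$ --- which is definable in $\la\overline{\R},P'\ra$ itself, with no $\#$ needed --- yields a gap of length $>2\delta$ and an explicit contradiction. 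You instead manufacture the unbounded, dimension-zero, bounded-gap set from the seams of a (DP)(I)-decomposition of the graph, via Lemma \ref{Xt bd} and Proposition \ref{fn bd}; this is the natural way to put the preliminaries of Subsection \ref{sec lin} to work, and your step (ii) does go through (the endpoint set is the image of the dimension-zero index set under an $\cal R$-definable map, hence definable and of dimension zero by (DIM), and the uniform bound $N$ on the lengths of the one-dimensional domains together with the density of their union forces every interval of length $N+1$ to meet it). The price of your route is two ambient-structure checks that the paper's route avoids: you must justify that $\la\cal R,P'\ra^\#$ has (DP) under the stated hypotheses, and you must apply Lemma \ref{unbd} to a set definable only in $\la\overline{\R},P'\ra^\#$ rather than in $\la\overline{\R},P'\ra$; both are covered because d-minimality passes to reducts and is preserved under $\#$ by \cite{FM}. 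The paper's argument is shorter and needs only sparseness of $P'$ plus d-minimality of $\la\overline{\R},P'\ra$, since its witness set lives in the unadorned structure from the start.
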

\begin{proof}
By Fact \ref{FM}, there is an \cal R-definable function $f:\R^{n+1}\to \R$ so that for every $x\in \R$ $$\lambda x\in \cl{f(x,P'^n)}.$$
By Fact \ref{str function}, there is an \cal R-definable affine function $g:\R^{n+1}\to\R$ and a box $B$ (of radius $\delta$) so that $ f(x)\in g(x)+B$. We may assume that $g$ is affine of the form $y\mapsto \sum_i\mu_iy_i+c$ for some $\mu_i\in I$ and $c\in \R$. We first observe that for every $x\in \R$ $$\lambda x\in g(x,P'^n)+B.$$
 Let $h:\R^n\to\R$, $y\mapsto \sum_{i>1}\mu_iy_i+c$. By Lemma \ref{unbd}, since $h(P'^n)$ is definable in $\la\cl{\R},P'\ra$, there is a complementary interval $E$ of  $\cl{h(P'^n)}$ so that $$|E|>2\delta.$$ Let $y$ be the middle point of $E$. Let $x=y/\lambda$. Since $\lambda\notin I$, $\lambda\neq \mu_1$ and let  $z=\frac{\lambda x+\delta}{\lambda-\mu_1}$. We have that $$\lambda z\in f(z,P'^n)+B=\mu_1 z+h(P'^n)+B$$
 and $$(\lambda-\mu_1)z=y+\delta\in h(P'^n)+B.$$
 This is a contradiction   with the choice of $y$.
\end{proof}
We can then give some sufficient  conditions for $\WR^\#$ to define new linear functions.
\begin{proposition}\label{restr d-min}
Let $P'\subseteq \R$ be an unbounded  set of dimension $0$ so that  $\la\cal R,(x\mapsto \lambda x),P'\ra$ has the interior or isolated point property and such that $\{d(x,P'\setminus \{x\}):\; x\in P'\}$ is bounded. Then $x\mapsto \lambda x$ is definable in $\la\cal R,(x\mapsto \lambda x)_{\res (0,1)},P'\cup \lambda P'\ra^\#$.
\end{proposition}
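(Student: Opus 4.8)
The plan is to reconstruct the graph $\Gamma:=\{(x,\lambda x):x\in\R\}$ directly inside $\WR':=\la\cal R,(x\mapsto\lambda x)_{\res(0,1)},P'\cup\lambda P'\ra^\#$, combining three ingredients: a partial multiplication by $\lambda$ on arbitrarily large \emph{bounded} intervals, obtained from $(x\mapsto\lambda x)_{\res(0,1)}$ and the group operations; the knowledge of the value of $\lambda$ at every point of $P'$, which is available because the full induced structure on $P'\cup\lambda P'$ has been added; and the gap hypothesis, which lets one write each real as a point of $P'$ plus a bounded correction.

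First I would record a rescaling observation: for each fixed $N\in\N$ the partial function $g_N:=(x\mapsto\lambda x)_{\res(-N,N)}$ is already definable in $\la\cal R,(x\mapsto\lambda x)_{\res(0,1)}\ra$. Indeed $\lambda$ is a parameter there (e.g.\ $\lambda=2\cdot(x\mapsto\lambda x)_{\res(0,1)}(1/2)$), multiplication by a fixed rational is $+$-definable, and for $t\in(0,N)$ one has $\lambda t=N\cdot\big[(x\mapsto\lambda x)_{\res(0,1)}(t/N)\big]$; one then extends the resulting function over $[0,N)$ by continuity and to $(-N,N)$ by oddness. Next, since $\WR'$ carries a predicate for \emph{every} subset of $(P'\cup\lambda P')^k$, it has predicates for $P'$, for $\lambda P'$ and --- this is the crucial point --- for the graph $\sigma:=\{(p,\lambda p):p\in P'\}\subseteq(P'\cup\lambda P')^2$. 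One may assume $\lambda>0$: when $\lambda<0$, replace $P'$ by $-P'$ and note that $\lambda P'$ is itself one of the added predicates, with $\lambda P'=|\lambda|\cdot(-P')$, so neither the structure nor the conclusion changes.

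Now fix $N\in\N$ large enough that the hypothesis on $\{d(x,P'\setminus\{x\}):x\in P'\}$ forces the set $D:=P'+(-N,N)$ to be all of $\R$ when $P'$ is unbounded on both sides, and to contain a half-line otherwise. Put
$$\Gamma_0:=\{(x,y)\in\R^2:\ \exists p\,\big(p\in P'\ \wedge\ |x-p|<N\ \wedge\ y=\sigma(p)+g_N(x-p)\big)\},$$
which is definable in $\WR'$. Since $\sigma(p)=\lambda p$ and $g_N(x-p)=\lambda(x-p)$, the defining condition forces $y=\lambda x$, and conversely every $x\in D$ occurs; hence $\Gamma_0$ is the restriction of $\Gamma$ to $D$. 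If $D=\R$ we are done. Otherwise one has $\Gamma=\{(x_1-x_2,\,y_1-y_2):(x_1,y_1),(x_2,y_2)\in\Gamma_0\}$, which is again definable in $\WR'$: the inclusion $\subseteq$ is immediate from linearity, and $\supseteq$ holds because $D-D=\R$, $D$ containing a half-line. Either way $x\mapsto\lambda x$ is definable in $\WR'$.

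The step that requires the most care is the choice of $N$: one must extract from the bounded nearest-neighbour hypothesis on $P'$ --- this is where the interior-or-isolated-point property of $\la\cal R,(x\mapsto\lambda x),P'\ra$ is used, to exclude pathological gap patterns --- that $P'$ (or at least a half-line) is covered by $P'+(-N,N)$ for some fixed $N\in\N$, with the one-sided case handled by passing to the difference $\Gamma_0-\Gamma_0$ as above. Granting this, the remainder is routine definability bookkeeping, the one genuine subtlety being the recognition that $\sigma$ is literally among the predicates supplied by the $\#$-operation, so that no internal characterisation of the correspondence $p\mapsto\lambda p$ is required.
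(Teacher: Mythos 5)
Your proof is correct and follows essentially the same route as the paper's: the key observations in both are that the graph of $p\mapsto\lambda p$ on $P'$ is literally one of the predicates supplied by the $\#$-operation, that $(x\mapsto\lambda x)_{\res(0,1)}$ rescales to bounded intervals of any fixed length, and that the bounded-gap hypothesis lets one write $\lambda x$ as $\lambda p+\lambda(x-p)$ for a nearby $p\in P'$. The only differences are cosmetic --- you treat the one-sided and $\lambda<0$ cases explicitly, which the paper elides, and both arguments read the nearest-neighbour hypothesis as bounding the lengths of the complementary intervals of (a closed) $P'$.
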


\begin{proof}First of all, we may assume that $P'$ is closed.
Let $\{(a_t,b_t)\}$ be the (bounded) family of intervals of $(0,\infty)\sm P'$. We first observe that the function $g:x\mapsto \lambda x\upharpoonright P'$ is definable in $\la \cal R,P'\cup \lambda P'\ra^\#$. We define $x\mapsto \lambda x$ on $(0, \infty)$ by $$x\in (a_t,b_t)\mapsto g(a_t)+\lambda (x-a_t).$$
\end{proof}
We can then relate Question \ref{Q linear} to:
\begin{question}
Is there a set $P\subseteq \R$ such that $P$ is closed  unbounded, of dimension $0$, $\la\cal R,(x\mapsto \lambda x),P\ra$ has the interior or isolated point property and  $\{d(x,P\setminus \{x\}):\; x\in P\}$ is bounded?
\end{question}
\subsection{Proof of Proposition \ref{prop-examples}}

\begin{proof}By Theorem \ref{main2}, a smooth function definable with an open  domain definable in  \Cal R needs to be definable in $\la\cal R,\Lambda(\WR),P\ra$.
For ($1$), since for $\lambda\in \R\sm \Q$, $\la\Cal R,\Z,\lambda \Z\ra$ defines a dense-codense set (take for exemple $\Z-\lambda\Z$), $\Lambda(\WR)=\Q$.
\par For ($2$), if $P$ is fast for some expansion of $\Cal R$ then it is fast for \Cal R and we have the result. If $P$ is an iteration sequence for a semibounded expansion of \Cal R, by Remark \ref{rmk alpha^N}, we may assume   it to be of  the form $\alpha ^\N$ and $\la\cal R,\alpha^\N\ra$  as well as $\la\Cal R,\alpha^\Z\ra$ are d-minimal by \cite{VDD1}. If $P$ is an iteration sequence for $\Cal R' $ some non semibounded expansion of $\cal R$, it is sufficient to show the following claim.
\begin{claim} $P$ is fast for $\cal R$.
\end{claim}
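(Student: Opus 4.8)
The plan is to show that an iteration sequence $P = (f^n(c))_n$ for a non-semibounded o-minimal expansion $\cal R'$ of $\cal R$ is fast for $\cal R$, i.e. that for every function $g:\R\to\R$ definable in $\cal R$, eventually $f^n(c) > g(f^{n-1}(c))$ — or more precisely, that $\{d(x, P):\ x\in\R\}$ forces the successor gaps in $P$ to eventually dominate every $\cal R$-definable function. First I would exploit semiboundedness of $\cal R$: by Fact \ref{str function}, every function $g:\R\to\R$ definable in $\cal R$ is, up to a bounded additive error, an affine function $x\mapsto \mu x + b$ with $\mu\in\Lambda(\cal R)$. So it suffices to control $P$ against such affine functions, i.e. to show that for every $\mu\in\Lambda(\cal R)$ and every constant $K$, eventually the successor $s(x)$ of $x$ in $P$ satisfies $s(x) > \mu x + K$.

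The key step is to compare growth rates in $\cal R'$ versus $\cal R$. Since $\cal R'$ is a non-semibounded o-minimal structure, it defines a field on $\R$, hence defines functions of super-linear (in fact super-polynomial, via exponentiation-like behaviour available once a field is present) growth; and $\cal R$ is $f$-bounded in the sense of the definition of iteration sequence, so $f$ itself is such a function — $f$ grows faster than every function definable in $\cal R'$ that $\cal R$ can see, in particular faster than every affine function with slope in $\Lambda(\cal R)$. More carefully: $f$ is a bijection $\R\to\R$ definable in $\cal R'$ with $\cal R$ being $f$-bounded, and $(f^n(c))_n$ growing and unbounded; because $\cal R'$ defines a real closed field, the function $x\mapsto f(x)/x$ is definable in $\cal R'$ and, by o-minimality of $\cal R'$, is eventually monotone, so it has a limit $\ell\in[0,\infty]$ at $+\infty$. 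If $\ell<\infty$ were finite then $f$ would be dominated by a linear function, so $x\mapsto f^n(x)$ would be dominated by $x\mapsto \ell^n x$, contradicting $f$-boundedness of $\cal R$ (which has unbounded "linear rate": $\cal R\supseteq\R_{gp}$ already contains $x\mapsto Nx$ for all $N$, and being $f$-bounded means $f^n$ eventually exceeds all of these, forcing $\ell=\infty$). Hence $f(x)/x\to\infty$, so for every $\mu$ we have $f(x) > \mu x + K$ eventually, and therefore the consecutive gap $f^{n+1}(c) - f^n(c) = f(f^n(c)) - f^n(c) > (\mu-1)f^n(c) + K$ eventually exceeds $\mu\cdot f^n(c) + K$ for the relevant $\mu$, which is exactly the fastness condition once translated through Fact \ref{str function}.

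Concretely, the steps in order are: (1) reduce "fast for $\cal R$" to dominating affine functions $x\mapsto\mu x+b$ with $\mu\in\Lambda(\cal R)$, via Fact \ref{str function}; (2) observe $f$ is definable in the o-minimal $\cal R'$, so $x\mapsto f(x)/x$ is eventually monotone and has a limit in $[0,\infty]$; (3) rule out a finite limit using $f$-boundedness of $\cal R$ together with the fact that $\cal R\supseteq\R_{gp}$ contains all rational-slope linear maps, so $f$ must grow super-linearly, giving $f(x)/x\to\infty$; (4) conclude that the successor gaps of $P=(f^n(c))_n$ eventually dominate every affine map seen by $\cal R$, hence $P$ is fast, and then invoke the known d-minimality (indeed noiselessness) of expansions by fast sequences to finish Proposition \ref{prop-examples} for case (2). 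I expect step (3) to be the main obstacle: one must carefully extract from "$\cal R$ is $f$-bounded" the quantitative statement that $f$ outgrows every linear function, handling the subtlety that $\Lambda(\cal R)$ may be all of $\R$ only when $\cal R$ is not semibounded (but here $\cal R$ is semibounded, so $\Lambda(\cal R)$ is a genuine subfield and the argument via $\R_{gp}\subseteq\cal R$ giving unbounded integer slopes is what does the work), and making sure the "eventually monotone, hence has a limit" step is applied to a function genuinely definable in $\cal R'$ and not just $\cal R$.
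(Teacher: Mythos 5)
Your overall strategy (reduce to affine comparison via Fact \ref{str function}, then show $f(x)/x\to\infty$ for the generating function $f$ of the iteration sequence) matches the paper's, but your step (3) --- the heart of the argument --- contains a genuine error. You claim that a finite limit $\ell$ of $f(x)/x$ is ruled out because ``$\cal R\supseteq\R_{gp}$ contains $x\mapsto Nx$ for all $N$, and being $f$-bounded means $f^n$ eventually exceeds all of these, forcing $\ell=\infty$.'' This misreads the quantifiers in the definition of $f$-boundedness: it only asserts that for each definable $g$ there exists \emph{some} iterate $f^n$ eventually exceeding $g$; a different $n$ may be used for each $g$. If $f(x)=2x$, then $f^n(x)=2^nx$ eventually exceeds $Nx$ as soon as $2^n>N$, so a purely linear $\cal R$ is $(x\mapsto 2x)$-bounded even though $f(x)/x\to 2<\infty$ --- and the resulting geometric sequence $(2^nc)$ is \emph{not} fast for a structure containing $x\mapsto 4x$. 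So linear maps of unbounded slope in $\cal R$ cannot ``do the work'' you assign to them; your argument as written proves nothing in the case $1<\ell<\infty$.

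What actually closes the gap --- and what the paper does --- is the hypothesis that $\cal R'$ (not $\cal R$) is $f$-bounded and that $\cal R'$ is \emph{non-semibounded}. The paper first shows that a non-semibounded o-minimal structure is not linearly bounded: given a nonlinear definable $f$ that were linearly bounded, setting $a=\inf\{b:bx>f(x)\text{ eventually}\}$, either $f^{-1}$ or $(f-ax)^{-1}$ fails to be linearly bounded. Then, since some single iterate $f^n$ must eventually dominate a fixed non-linearly-bounded function of $\cal R'$, and iterates of linearly bounded functions remain linearly bounded, $f$ itself cannot be linearly bounded; hence $bx/f(x)\to 0$ for every $b$ and the sequence is fast. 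You do gesture at ``$\cal R'$ defines super-linear functions'' in passing (though your parenthetical about super-polynomial growth via ``exponentiation-like behaviour'' is false --- $\overline{\R}$ is polynomially bounded), but you never actually deploy this fact; you would need to replace your step (3) entirely by the comparison against a non-linearly-bounded function definable in $\cal R'$, which is the one quantitative input that genuinely forces $f(x)/x\to\infty$.
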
 
\begin{proof}First, we show that $\Cal R'$ is not linearly bounded. Let $f$ be a non linear function definable in $\Cal R'$. We assume that $f$ is strictly growing, positive and linearly bounded. let $a$ be the infimum of $\{b\in \R: \; bx>f(x) \text{ eventually}\}$. If $b=0$ then $f^{-1}$ is not linearly bounded. If not then let $ g=f-ax$ and $ g^{-1}$ is not linearly bounded.
\par Let $f$ be a non linearly bounded function definable in $\cal R'$. Since $P$ is an iteration sequence for $\Cal R'$, $P$ is of the form $(g^n(a))$ for some $a\in \R$. Moreover, since there is $n\in \N$ so that $g^n>f$ eventually, $g$ is neither linearly bounded and  for every $b\in \R$ $$bx/g(x)\to 0$$
and $P$ is fast for \cal R.   
\end{proof}

 Thus $\la\Cal R,P\ra$ is d-minimal by \cite{FM2}. 
  We apply Proposition \ref{no lin} to get that $\Lambda(\cal R)=\Lambda(\WR)$ and we get the result.
\end{proof}

\subsection{Proof of Proposition \ref{prop-ex-lin}}
\begin{proposition} Let $\cal R$ be a linear reduct of $\cal R_{sb}$ and let $\WR$ be one of ($1$)-($4$) of Example \ref{ex}. Then every definable $\Cal C^1$-function with a domain definable in \Cal R is linear and definable in \Cal R.
\end{proposition}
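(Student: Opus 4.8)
The plan is to combine the two structural theorems of the paper, namely Theorem~\ref{main2} (in its precise form, Proposition~\ref{Z lin}), with the d-minimality of the structures in Example~\ref{ex} established in Proposition~\ref{ex d-min}, and with the computation of $\Lambda(\WR)$ carried out in the proof of Proposition~\ref{prop-examples}. First I would invoke Proposition~\ref{ex d-min}: since $\cal R$ is a linear reduct of $\cal R_{sb}$, each of the structures $(1)$--$(4)$ of Example~\ref{ex} is d-minimal, hence has the interior or isolated point property, so that $\WR\subseteq \la\cal R,P\ra^\#$ with $\la\cal R,P\ra$ satisfying the hypotheses of Proposition~\ref{Z lin}. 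Therefore any definable $\Cal C^1$-function $f$ with domain definable in $\cal R$ is definable in $\la\cal R,\Lambda(\WR)\ra$, is linear, and has coefficients in $I$ (here $\cal R=\la\R,<,+,((x\mapsto\lambda x)_{\res(0,1)})_{\lambda\in I},(x\mapsto\lambda x)_{\lambda\in J}\ra$).

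The remaining point is to show $\Lambda(\WR)=\Lambda(\cal R)$, which upgrades "definable in $\la\cal R,\Lambda(\WR)\ra$" to "definable in $\cal R$". This is done case by case exactly as in the proof of Proposition~\ref{prop-examples}. For $(1)$, $P=\Z$ and $\Lambda(\cal R)=\Q$; if some $\lambda\in\R\sm\Q$ lay in $\Lambda(\WR)$ then $\la\cal R,\Z,\lambda\Z\ra$ would define the dense-codense set $\Z-\lambda\Z$, contradicting d-minimality, so $\Lambda(\WR)=\Q$. For $(2)$ and $(3)$, by Remark~\ref{rmk alpha^N} an iteration sequence for a semibounded $\cal R'$ may be taken of the form $\alpha^\N$, and an iteration sequence for a non-semibounded $\cal R'$ is fast for $\cal R$; in either case $\la\cl\R,P\ra$ is d-minimal, so Proposition~\ref{no lin} gives that no $x\mapsto\lambda x$ with $\lambda\notin\Lambda(\cal R)$ is definable, i.e.\ $\Lambda(\WR)=\Lambda(\cal R)$. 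For $(4)$, $\cal R=\R_{vec}$ (or a linear reduct thereof), $P>0$ is the range of a decreasing sequence with limit $0$, and $\la\cl\R,P\ra$ is d-minimal by Theorem~\ref{T1} applied over $\cl\R$, so again Proposition~\ref{no lin} yields $\Lambda(\WR)=\Lambda(\cal R)$. Combining with the previous paragraph, $f$ is linear with coefficients in $\Lambda(\cal R)$ and hence definable in $\cal R$.

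I expect the only genuine subtlety to be bookkeeping in case $(4)$: one must check that Theorem~\ref{T1} (and the local o-minimality hypothesis it requires, coming from Lemma~\ref{R1Z dmin} and Fact~\ref{R0Z dmin}-type input) applies when the ambient o-minimal structure is replaced by $\cl\R$ rather than a linear reduct, so that Proposition~\ref{no lin} is legitimately available; this is where one should be careful that "fast/sparse" and "d-minimal over $\cl\R$" are the hypotheses actually used by Proposition~\ref{no lin}. The rest is a direct assembly of results already proved: Proposition~\ref{ex d-min} for d-minimality, Proposition~\ref{Z lin} for the $\Cal C^1$-rigidity and the containment $\Lambda(\WR)\subseteq I$, and Proposition~\ref{no lin} (with the case analysis of Remark~\ref{rmk alpha^N}) for the identification $\Lambda(\WR)=\Lambda(\cal R)$.
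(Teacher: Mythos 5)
Your treatment of cases (1)--(3) matches the paper's: d-minimality from Proposition~\ref{ex d-min}, linearity from Theorem~\ref{main2}/Proposition~\ref{Z lin}, and the identification $\Lambda(\WR)=\Lambda(\cal R)$ via the dense-codense set $\Z-\lambda\Z$ for (1) and via Proposition~\ref{no lin} (with Remark~\ref{rmk alpha^N}) for (2)--(3); the paper simply packages this as ``apply Proposition~\ref{prop-examples}''.

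Case (4), however, contains a genuine gap, and it is exactly at the point you flag as ``bookkeeping''. You want to apply Theorem~\ref{T1} with $\cl{\R}$ as the ambient structure so as to conclude that $\la\cl{\R},P\ra$ is d-minimal and then invoke Proposition~\ref{no lin}. This fails: the proof of Theorem~\ref{T1} uses essentially that every $\cal R$-definable function is (up to a bounded error) affine, and its conclusion is false over the field. Indeed the paper's own Proposition~\ref{1/N} shows that for $P=1/\N$ (a decreasing sequence with limit $0$, i.e.\ an instance of Example~\ref{ex}(4)), the structure $\la\R,<,+,\cdot_{\res[0,1]^2},1/\N\ra$ is \emph{not} d-minimal and even defines the trace on bounded sets of the projective hierarchy; a fortiori $\la\cl{\R},P\ra$ is not d-minimal, so the hypothesis of Proposition~\ref{no lin} is unavailable for (4). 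The paper closes this case by a different, direct argument exploiting that $P$ is \emph{bounded}: if $g:x\mapsto\lambda x$ were definable, Fact~\ref{DPI} writes $\Gamma(g)=\bigcup_{t\in S}X_t$ with the $X_t$ graphs of $\cal R$-definable linear functions; the set $Y$ of endpoints of the $\pi(X_t)$ is definable of dimension $0$, hence (by Fact~\ref{FM} and linearity of $\cal R$-definable functions applied to the bounded set $P$) bounded, so some single $X_t$ is cofinal in $\R$ and $g$ is already definable in $\cal R$, a contradiction. You need to replace your appeal to Theorem~\ref{T1} over $\cl{\R}$ by an argument of this kind.
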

\begin{proof}
First all these structures are d-minimal by Proposition \ref{ex d-min} and thus satisfy the interior or isolated point property. By Theorem \ref{main2}, a $\cal C^1$ function with a domain definable in \Cal R is linear. 
For ($1$)-($3$), since these structures are reduct of $\la\WR,\cdot_{\res (0,1)^2}\ra$, we have the result by applying Proposition \ref{prop-examples}. For ($4$),  since $P$ is bounded, for every function $f:\R^k\to \R$ definable in \Cal R, we have that $f(P^k)$ is bounded. Let us assume that $g:x\mapsto \lambda x$ is definable in $\WR$. Let $\{X_t:\; t\in S\}$ be the small family of cells given by Fact \ref{DPI} so that $\Gamma(g)=\bigcup_{t\in S}X_t$. Each $X_t$ is the graph of a linear function and let $Y$ be the set of left and right endpoints of $\pi(X_t)$ for every $t\in S$ (and for $\pi$ the projection on the first coordinates). Since $Y$ has dimension $0$, it is bounded and thus one the $X_t$ is cofinal in $\R$. Therefore $g$ is definable in $\cal R$ and that is a contradiction. 
\end{proof}
\section{Discussion and questions}\label{discussion}
In this section we first exhibit a counterexample that shows that Proposition  \ref{R1Z dmin} does not need to hold with the assumption of d-minimality in place of local o-minimality. It also provides some new examples of  d-minimal structures that are not  reducts of a d-minimal expansion of the real field. Second, we  answer a question raised in \cite{ES2}[Question 2.6] about some equivalence between usual notion of semiboundedness in our setting. We exhibit a counterexample that shows that Definition \ref{def sb} is not equivalent to not defining a pole. We finish by raising some questions. 

\begin{proposition}\label{1/N}
The structure $\la\R,<,+,(x\mapsto \lambda x)_{\lambda\in \R},\{1/n:\; n\in \N\}\ra$ is d-minimal and  $\la\R,<,+,\cdot_{\upharpoonright [0,1]^2},1/\N\ra$ is not d-minimal. The latter even defines the trace on bounded sets of any set in the projective hierarchy.
\end{proposition}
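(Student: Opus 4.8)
The plan is to prove the two halves separately, each by reducing to known structures. For the first assertion, I would show that $\la\R,<,+,(x\mapsto\lambda x)_{\lambda\in\R},1/\N\ra$ falls under Theorem \ref{T1}: indeed $\cal R=\R_{vec}$ is linear, and $P=\{1/n:n\in\N\}$ is discrete, closed in its convex hull $(0,1]$, contained in $\R^{>0}$, and has its only accumulation point at $0$. It remains to check the hypothesis that $\la\R_{vec},P^{>1}\ra$ is locally o-minimal; but $P^{>1}=\es$ (or a single point if we include $n=1$), so $\la\R_{vec},P^{>1}\ra=\R_{vec}$, which is o-minimal, hence locally o-minimal. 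Theorem \ref{T1} then yields that $\la\R_{vec},P\ra^\#$ is d-minimal, and a fortiori so is $\la\R_{vec},P\ra$. (Alternatively one may observe directly that $1/\N$ is sparse in $\R_{vec}$ and run the argument in the proof of Theorem \ref{T1} for the case $l=0$, $n=1$.)

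For the second assertion, the point is that $\la\R,<,+,\cdot_{\res[0,1]^2},1/\N\ra$ defines the graph of multiplication on all of $\R^2$: using the restricted multiplication together with the total scalings $x\mapsto\lambda x$ (which are semialgebraic, hence already definable in $\la\R,<,+,\cdot_{\res[0,1]^2}\ra$), one recovers $\cdot$ globally by the standard trick $xy=c^2\cdot\big((x/c)\cdot_{\res}(y/c)\big)$ on any bounded box, patched over $\R^2$. Hence the structure is an expansion of $\R_{field}$. Now $\la\R_{field},1/\N\ra$ defines $\N$ itself — from $\{1/n:n\in\N\}$ one recovers $\N=\{1/x:x\in 1/\N\}\cup\{0\}$ — and $\la\R_{field},\N\ra$ defines $\Z$ and hence every projective set, so in particular it is not d-minimal (it defines a bounded discrete set whose interaction with addition produces non-d-minimal behaviour). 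For the sharper "defines the trace on bounded sets of any projective set", I would invoke the classical fact (Harvey Friedman / Miller, or the standard observation that $\la\R_{field},\Z\ra$ defines $\N$-coded second-order arithmetic) that $\la\R_{field},\Z\ra$ defines every projective subset of $\R^n$, and simply note that since here $\N$ is defined rather than $\Z$, one obtains every projective subset of $\R_{\ge 0}^n$, which after reflecting through the definable bijections $x\mapsto -x$ gives the trace on bounded sets of any projective set.

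The main obstacle is purely bookkeeping in the second half: making precise that restricted multiplication plus total linear maps really does define global multiplication, and then citing the correct statement about $\la\R_{field},\N\ra$ and the projective hierarchy in the exact form needed (trace on bounded sets). Neither step is deep — the first is the well-known "$\cdot_{\res}$ defines a field" folklore, the second is Friedman's theorem — but one should be careful that the structure as written only has $\cdot_{\res[0,1]^2}$ and $1/\N$ and check that these genuinely suffice, i.e. that all the auxiliary maps used (scalings, the bijection $x\mapsto 1/x$ on $(0,1]$, translations) are indeed definable. Once this is in place both claims follow immediately.
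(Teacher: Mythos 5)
Your treatment of the first assertion is correct and is exactly what the paper does: one simply checks the hypotheses of Theorem \ref{T1} for $\cal R=\R_{vec}$ and $P=1/\N$ (with $P^{>1}$ empty, so local o-minimality of $\la\R_{vec},P^{>1}\ra$ is trivial).

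The second half of your argument, however, rests on a false premise. The structure $\la\R,<,+,\cdot_{\res[0,1]^2},(x\mapsto\lambda x)_{\lambda\in\R}\ra$ does \emph{not} define global multiplication: it is the classical semibounded structure $\R_{sb}$, which by Peterzil's structure theorem (and Marker--Peterzil--Pillay) is a proper reduct of $\overline{\R}$ — this is precisely the notion of semiboundedness that the whole paper is built around. Your ``standard trick'' $xy=c^2\cdot\bigl((x/c)\cdot_{\res}(y/c)\bigr)$ does not patch over $\R^2$: for unbounded $x,y$ the parameter $c$ must grow with the inputs, and the outer rescaling $z\mapsto c^2 z$ with $c$ a variable is itself an instance of unrestricted multiplication, which is not available. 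For the same reason $x\mapsto 1/x$ on $(0,1]$ (whose graph is not a finite union of cones) is not definable, so you cannot recover $\N$ from $1/\N$, and the chain ``expansion of $\R_{field}$ $\Rightarrow$ defines $\N$ $\Rightarrow$ defines all projective sets'' collapses at the first link. Note also that your route would prove that the structure defines every projective set outright, whereas the proposition only claims the trace on \emph{bounded} sets — exactly because the structure remains semibounded. The paper's actual argument is a transfer theorem: for $P$ a bounded decreasing sequence with limit $0$, $P$ is sparse, so by Fact \ref{FM} every bounded definable set of dimension $0$ in $\la\overline{\R},P\ra$ is contained in $\cl{f(P^k)}$ for some $f$ which may be taken semibounded; hence $\la\R_{sb},P\ra$ and $\la\overline{\R},P\ra$ agree on d-minimality, noiselessness, and on defining all bounded projective sets. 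One then uses that $\la\overline{\R},1/\N\ra$ (where $1/x$ \emph{is} available) defines $\N$ and the projective hierarchy, and transfers the bounded part down to the semibounded reduct. To repair your proof you would need this transfer step, or some substitute for it; the field-theoretic shortcut is not available.
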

FOr the first part we just apply Theorem \ref{T1}.
The second part  follows directly from: 
\begin{theorem}
Let $P\subseteq [0,1] $ be the range  of a decreasing sequence with limit $0$. Then for $\la\R_{sb},P\ra$ to satisfy one of the following properties \begin{enumerate}
\item is d-minimal
\item  is noiseless
\item has the interior or isolated point property
\item  satisfies that every definable set $X\subseteq \R$ has interior or is null (in the sens of Lebeque)
\item defines any bounded set in the projective hierarchy 
\end{enumerate} 
is equivalent for  $\la\cl{\R},P\ra$ to satisfy the corresponding  property among ($1$)-($5$).
\end{theorem}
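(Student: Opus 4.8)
The plan is to derive the equivalence from a tame/wild dichotomy for expansions of the real field together with a single transfer principle, after the routine observation that $\la\R_{sb},P\ra$ is a reduct of $\la\overline{\R},P\ra$ (as $\R_{sb}$ is a reduct of $\overline{\R}$). Consequently each of (1)--(4) passes from $\la\overline{\R},P\ra$ to $\la\R_{sb},P\ra$, while (5) passes from $\la\R_{sb},P\ra$ to $\la\overline{\R},P\ra$. I shall also use the following general facts: d-minimality implies the interior or isolated point property, which implies noiselessness; d-minimality implies (4), since a definable $X\subseteq\R$ with empty interior has dimension $0$, hence is a finite union of discrete sets, hence countable and null; d-minimality implies $\lnot$(5), since then every definable subset of $\R$ is Borel; and, conversely, a structure defining every bounded set of the projective hierarchy satisfies (5) yet fails each of (1)--(4), because it defines $\Q\cap[0,1]$ (failure of (2) and (3)), a fat Cantor set (failure of (4)) and a non-Borel bounded set (failure of (1)).

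Granting the two statements (I) \emph{$\la\overline{\R},P\ra$ is either d-minimal or defines every bounded set of the projective hierarchy}, and (II) \emph{if $\la\overline{\R},P\ra$ defines every bounded projective set then so does $\la\R_{sb},P\ra$}, the theorem follows by cases. If $\la\overline{\R},P\ra$ is d-minimal it satisfies (1)--(4) and $\lnot$(5), hence so does $\la\R_{sb},P\ra$ (the first four by inheritance; $\lnot$(5) because otherwise the expansion $\la\overline{\R},P\ra$ would inherit (5)). If instead $\la\overline{\R},P\ra$ defines every bounded projective set, then so does $\la\R_{sb},P\ra$ by (II), and both then satisfy (5) and fail (1)--(4). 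Thus in either case the two structures agree on all of (1)--(5). Statement (I) is, via the reciprocal map, the classification of expansions of the real field by a discrete/sparse sequence, which I take from \cite{FM,Hiero-AEG}.

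The content is (II). The key feature of $\R_{sb}$ is that it carries the full field on bounded regions: for every bounded box $B\subseteq\R^k$ and semialgebraic $S\subseteq\R^k$, the set $S\cap B$ is definable in $\R_{sb}$ (in particular $\R_{sb}$ defines the restriction of multiplication to $B$). Indeed, $\overline{\R}$ being o-minimal, $S$ is a finite Boolean combination of sets $\{\bar x:f(\bar x)>0\}$ with $f$ a polynomial over $\Z$, and on $B$ each value $f(\bar x)$ arises from the coordinates by finitely many additions and multiplications of reals confined to one fixed bounded interval; such a multiplication, after rescaling by the scalar maps $x\mapsto\lambda x$, is an instance of $\cdot_{\res(0,1)^2}$. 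Now assume $\la\overline{\R},P\ra$ defines every bounded projective set; by (I) it is not d-minimal. Write $P=\{x_n:n\in\N\}$ with $x_0>x_1>\cdots\to 0$, so the successor $x_n\mapsto x_{n+1}$ is definable from $<$ and $P$. From the classification behind (I), the non-d-minimality of $\la\overline{\R},P\ra$ is witnessed by semialgebraic functions that are not sparse on $P$; combining these with the successor and the order, one defines in $\la\overline{\R},P\ra$ a copy of $(\N,+,\cdot)$ with universe $P$ (identifying $n$ with $x_n$) and a semialgebraic pairing sending $r\in[0,1]$ to its digit set $\{x_n:\text{the }n\text{-th binary digit of }r\text{ is }1\}$. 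Since $P\subseteq(0,1]$, all semialgebraic data here have arguments and values in a bounded box $(P\cup[0,1])^m$, so by the previous observation every one of these sets is already definable in $\la\R_{sb},P\ra$. Hence $\la\R_{sb},P\ra$ interprets $(\N,+,\cdot)$ on the bounded definable set $P$, with quantification over $\N$ realized by quantification over $P$ and quantification over subsets of $\N$ realized by quantification over $[0,1]$ through the digit pairing; the standard coding of the projective hierarchy in second-order arithmetic then gives that $\la\R_{sb},P\ra$ defines every bounded projective subset of every $\R^m$, which is (II).

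The main obstacle is the opening step in the proof of (II): passing from bare non-d-minimality of $\la\overline{\R},P\ra$ to an explicit copy of $(\N,+,\cdot)$ on $P$ whose witnessing semialgebraic functions are never evaluated outside a fixed bounded box --- this boundedness is precisely what lets the ``bounded field'' observation move everything into $\la\R_{sb},P\ra$. It is here that the hypothesis $P\subseteq(0,1]$ (so that $P\cup\{0\}$ is compact and the relevant data renormalize into one box), and the precise form of the cited classification rather than non-d-minimality as a black box, are used.
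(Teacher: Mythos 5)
Your reduction to the two statements (I) and (II) founders on (I). The dichotomy you assert --- that $\la\overline{\R},P\ra$ is either d-minimal or defines every bounded projective set --- is not what \cite{FM} and \cite{Hiero-AEG} establish, and it is not available. Applying Hieronymi's theorem to the closed discrete unbounded set $1/P$ together with Friedman--Miller, what one actually gets is: either some semialgebraic image of $P^k$ is somewhere dense, in which case $\la\overline{\R},P\ra$ defines $\Z$ and hence all bounded projective sets, or every such image is nowhere dense, in which case $\la\overline{\R},P\ra^\#$ is \emph{noiseless}. Noiselessness is property (2), not property (1); whether it forces d-minimality for such expansions is not known, and the theorem lists (1)--(5) as five separate equivalences precisely because they are not known to collapse into ``tame or wild''. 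Your case analysis therefore says nothing in the (possible) intermediate case where $\la\overline{\R},P\ra$ is, say, noiseless but not d-minimal, and it cannot distinguish the transfers of (1), (3) and (4) from that of (2). The same conflation reappears inside your argument for (II), where you pass from ``non-d-minimality'' to ``semialgebraic functions that are not sparse on $P$''; moreover the bounded interpretation of $(\N,+,\cdot)$ on $P$ with a semialgebraic digit pairing is asserted rather than constructed, and would amount to reproving Hieronymi's theorem with all witnesses confined to a box.

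The mechanism you are missing works property by property and needs no dichotomy: since $P$ is countable it is sparse, so by Fact \ref{FM} any dimension-$0$ set $X$ definable in $\la\overline{\R},P\ra^\#$ witnessing the failure of (2), (3) or (4) satisfies $X\subseteq \cl{f(P^k)}$ for some semialgebraic $f$; after localizing to a bounded box (legitimate because these failures are local and $P\subseteq[0,1]$), $f$ may be taken semibounded, so the witnessing noise $\cl{f(P^k)}$ is already definable in $\la\R_{sb},P\ra$. This is exactly the ``bounded restriction of the field'' observation you make, but deployed on the Friedman--Miller cover rather than on an interpretation of arithmetic. Property (1) is handled the same way for families contained in a bounded set and then globalized by passing to $(X_t-X_t)\cap B$; property (5) is transferred via the Hieronymi--Walsberg criterion: it suffices to transfer a dense $\omega$-orderable set, and $f(P^k)$ with $f$ semibounded is one. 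Your overall architecture (reduct inheritance one way, a single wildness transfer the other way) is sound only for (5) and (2); for the full statement you must replace the global dichotomy (I) by these local covering arguments.
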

\begin{proof} Since $\la\R_{sb},P\ra$ is a reduct of $\la\cl{\R},P\ra$, we only have to prove the direct direction and we assume that $\la\R_{sb},P\ra$ has one of  ($1$)-($5$). First of all, we observe that $P$ is $\cl{\R}$-sparse since it is countable.
\par For ($2$)-($4$), let $X\subseteq \R$ be definable in $\la\cl{\R},P\ra$ so that $X$ has dimension $0$ and is ($2$) dense-codense somewhere, ($3$) has  no isolated point, ($4$) is not null. Since these properties are local, we may assume that $X$ is bounded. Moreover, by sparsness, there is $f:\R^k\to \R$ so that $X\subseteq \cl{f(P^k)}$. Since $P^k$ and $X$ are bounded, we may assume that $f$ is semibounded and this is a contradiction with assumptions ($2$)-($4$). 
\par For ($1$), using exactly the same argument we obtain that for every definable family $\{X_t:\, t\in A\}$ of sets of dimension $0$ all contained in a bounded set, there is a uniform bound on the number of discrete sets needed to decompose each $X_t$. In the general case, let $\{X_t:\,t\in A\}$ be a definable family of sets of dimension $0$. We just have to observe  that for any bounded interval $0\in B$, $B\cap (X_t-X_t)$ needs at least the same cardinal of discrete sets (possibly infinitely many) than $X_t$ to be decomposed. This gives us the result by applying the first part to $\{(X_t-X_t)\cap B:\, t\in A\}$ for some bounded interval $0\in B$.
\par For ($5$), by \cite{HW}[Theorem ?], it is sufficient to prove that if $\la \cl{\R},P\ra$ defines a dense $\omega$-orderable set then so does $\la\R_{sb},P\ra$. Let $X$ be dense $\omega$-orderable   definable in $\la\cl{\R},P\ra$. As previously, we may assume that $X$ is bounded and there is a semibounded function $f:[0,1]^k\to \R$ so that $X\subseteq \cl{f(P^k)}$. Since $X$ is dense $\omega$-orderable, so does $f(P^k)$ and we get the result. 
\end{proof}

\medskip
\par In \cite{ES2}, we asked wether or not Definition \ref{def sb} was equivalent to defining a pole. The answer is no and here is an example:
\begin{proposition}
The structure $\la \R,<,+,\{f_t:x\in \R\mapsto tx:\; t\in 2^\Z\}\ra$ defines a pole but does not define a field on $\R$.
\end{proposition}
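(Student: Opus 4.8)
The plan is as follows. Throughout, write $\cal S=\la\R,<,+,\{f_t:t\in 2^\Z\}\ra$, and read the family $\{f_t\}_{t\in 2^\Z}$ as added \emph{uniformly}, i.e.\ so that the ternary relation $\mu=\{(t,x,y):t\in 2^\Z,\ y=tx\}$ is definable in $\cal S$ (under the non-uniform reading each $f_t$ is already definable in $\la\R,<,+\ra$ and the statement fails). Then $2^\Z$ is definable (project $\mu$ onto its first coordinate), as is the restricted multiplication $\mu\colon 2^\Z\times\R\to\R$; in particular the multiplication of the group $2^\Z$, hence squaring $t\mapsto t\cdot t$ and inversion $t\mapsto 1/t$ on $2^\Z$, are definable.

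\emph{That $\cal S$ defines a pole} I would prove by an explicit polar‑coordinate construction. Set $Q=2^\Z\cap(0,1]$ and, for $x>0$, $e(x)=\max\bigl(2^\Z\cap(0,x]\bigr)$; both are definable since $2^\Z$ is discrete and closed in $\R^{>0}$. Using $\mu$ one gets the definable bijections $\pi_1\colon\R^{>0}\to[1,2)\times 2^\Z$, $x\mapsto\bigl(x/e(x),e(x)\bigr)$, with inverse $(a,t)\mapsto at$; $\pi_2\colon(0,2)\to[1,2)\times Q$, $x\mapsto\bigl(x/e(x),e(x)\bigr)$, with inverse $(a,q)\mapsto aq$; and $\beta\colon 2^\Z\to Q$ given by $\beta(t)=1/t^2$ for $t\ge 1$ and $\beta(t)=2t^2$ for $t<1$, which is definable (squaring and inversion on $2^\Z$ are, and $t\mapsto 2t=f_2(t)$ is) and which a direct check shows is a bijection. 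Composing, $\Phi=\pi_2^{-1}\circ(\mathrm{id}_{[1,2)}\times\beta)\circ\pi_1$ is a definable bijection $\R^{>0}\to(0,2)$; extending by $\Phi(0)=0$ and setting $\Psi(x)=\Phi(x)$ for $x\ge 0$ and $\Psi(x)=-\Phi(-x)$ for $x<0$ yields a definable bijection $\Psi\colon\R\to(-2,2)$, whose inverse is a pole.

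\emph{That $\cal S$ defines no field on $\R$} I would deduce from $\Lambda(\cal S)=\Q$. Indeed, if $\cal S$ defined a field with domain $\R$ then, since $\la\R,<,+\ra\subseteq\cal S$ and $\cal S$ is d‑minimal (being a reduct of the d‑minimal structure $\la\overline\R,2^\Z\ra$), the standard analysis of definable fields would give that $\cal S$ defines the real field $\overline\R$, hence $x\mapsto\lambda x$ for every $\lambda\in\R$, contradicting $\Lambda(\cal S)=\Q$. To see $\Lambda(\cal S)=\Q$, let $x\mapsto\lambda x$ be a definable total linear function, $\lambda>0$. Then $\lambda\cdot 2^\Z$ is definable with empty interior, so by the analogue of Fact~\ref{FM} for $\cal S$ below, $\lambda\cdot 2^\Z\subseteq\overline{g\bigl((2^\Z)^m\bigr)}$ for some $g$ definable in $\la\R,<,+\ra$, i.e.\ $g(\bar t)=\sum_i q_it_i+c$ with $q_i\in\Q$. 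For $\bar t\in(2^\Z)^m$ the value $g(\bar t)$ lies in $\Z[\tfrac12]+c$, and an elementary estimate with the geometric scale $2^\Z$ shows that any convergent sequence of such values has limit again in $\Z[\tfrac12]+c$ (summands whose exponent tends to $+\infty$ either diverge or cancel exactly, and the others contribute values in $\Z[\tfrac12]$), so $\overline{g((2^\Z)^m)}\subseteq\Z[\tfrac12]+c$. Hence the difference of any two elements of $\lambda\cdot 2^\Z$ lies in $\Z[\tfrac12]\subseteq\Q$; taking $\lambda\cdot 2-\lambda\cdot 1=\lambda$ gives $\lambda\in\Q$.

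\emph{The main obstacle} is the required analogue of Fact~\ref{FM} for $\cal S$: every definable $B\subseteq\R$ with empty interior is contained in $\overline{g((2^\Z)^m)}$ for some $\la\R,<,+\ra$‑definable (equivalently, $\Q$‑affine) $g$. This does \emph{not} follow from Fact~\ref{FM}, because there is no semibounded o‑minimal $\cal R$ and $0$‑dimensional $P$ with $\cal S\subseteq\la\cal R,P\ra^\#$: the relation $\mu$ is not definable over any such $P$, and forcing $\mu$ to be definable makes $\cal R$ contain a pole (e.g.\ $\cal R=\overline\R$), for which Fact~\ref{FM} produces only a \emph{semialgebraic} $g$ and no longer yields $\lambda\in\Q$. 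Instead I would prove the statement directly, by induction on formulas, exploiting that $\mu$ is a family $\{x\mapsto 2^kx\}_{k\in\Z}$ of $\Q$‑linear maps indexed by the sparse set $2^\Z$, so that the noisy part of every $\cal S$‑definable set is still controlled by purely linear functions of tuples from $2^\Z$; carrying out this induction — essentially reproving the Friedman–Miller analysis in the presence of $\mu$ — is where the real work lies.
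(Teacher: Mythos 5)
Your pole construction is correct and is essentially the paper's: both hinge on the observation that the uniform relation $\mu=\{(t,x,y):t\in 2^\Z,\ y=tx\}$ makes inversion (and in your case squaring) on $2^\Z$ definable, and then use the scale $2^\Z$ to compress $\R^{>0}$ into a bounded interval piecewise-affinely. Your version is more explicit than the paper's (which writes down the family $g_t\colon x\in[t,2t)\mapsto -(x-t)/2t+1/2t$ for $t\in 2^{-\N}$ and asserts the glued map is a pole), and it is fine.

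The second half, however, has a genuine gap, and you have located it yourself: everything rests on the claim that every $\cal S$-definable nowhere dense subset of $\R$ is contained in $\overline{g((2^\Z)^m)}$ for a \emph{$\Q$-affine} $g$, and you correctly observe that Fact~\ref{FM} cannot deliver this (the only ambient o-minimal reduct over which $\mu$ becomes definable is $\overline\R$, which returns only semialgebraic $g$ and kills the conclusion $\lambda\in\Q$). You then defer the proof to an unspecified ``induction on formulas \dots\ essentially reproving the Friedman--Miller analysis in the presence of $\mu$,'' which is precisely the content that is missing; as written the argument does not establish $\Lambda(\cal S)=\Q$. The paper closes exactly this hole by quoting Delon's model-completeness theorem for $\la\R,<,+,\{f_t\},2^\Z,(x\mapsto\lambda x)_{\lambda\in\Q}\ra$, from which an analysis of the (existential) formulas shows that every definable function is almost everywhere locally linear. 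A second, smaller gap: your reduction ``a definable field on $\R$ implies $\cal S$ defines $\overline\R$'' is asserted via ``the standard analysis of definable fields,'' but that analysis is an o-minimal fact and $\cal S$ is not o-minimal; the paper instead avoids it by using d-minimality (hence (DP)) to extract a chunk of the graph of the field multiplication, which must be a semialgebraic, non-semilinear cell, contradicting the local linearity obtained from Delon's theorem. If you replace your proposed formula induction by an appeal to Delon's result and rephrase the field step in terms of chunks, your outline becomes a complete proof along the paper's lines.
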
 
\begin{proof}
First, observe that the function $x\in 2^\Z\mapsto x^{-1}$ is definable by $$x\mapsto y \text{ so that $f_x(y)=1$}.$$
Therefore the family of functions $\{g_t:x\in [t,2t)\mapsto -(x-t)/2t+1/2t:\; t\in 2^{-\N}\}$ is also definable and it is not hard to see that $$g:(0,1)\to \R,\, x\in [t,2t)\mapsto g_t(x) \text { is a pole}.$$
Moreover, as shown by Delon in \cite{Delon}, $\la\R,<,+,\{f_t\},2^\Z,(x\mapsto \lambda x)_{\lambda \in \Q}\ra$ is model complete and by a simple analysis of the formulas, we get that  there is no definable non almost everywhere locally linear functions. Since $\la\R,<,+,\{f_t\}\ra$ is a reduct of $\la\cl{\R},2^\Z\ra$, that is d-minimal,  and if a global field is definable then the graph of its multiplication should contain some chunk $Y$ that is semialgebraic and not semilinear. That is a contradiction and we get the result.
\end{proof}
\bigskip\par We finish with some natural questions that arise from the current work.
Around Proposition  \ref{1/N}, we may ask different questions:
\begin{question+}\label{Q1}
Is there a non linear bounded function  $f:(0,1)\to \R$ so that $\la\R,<,+,f\ra$ is o-minimal (and thus, semibounded) and that $\la\R,<,+,f,1/\N\ra$ is d-minimal?
\end{question+}
Actually, Walsberg answered a similar question in \cite{W-sb}[Part $3$] and we can derive an example for Question \ref{Q1}.
\begin{example}
We know that  $\WR=\la \R_{sb}, 2^{-\N}\ra$ is d-minimal. Let $\la \cal R, 1/\N\ra$ be the pushforward of $\WR$ by $\log_2$. This structure answers Question \ref{Q1}.
\end{example}
\begin{question+}
We may ask exactly the same question as \ref{Q1} but with $\R_{vec}$ in place of $\la\R,<,+\ra$.
\end{question+}
\begin{question+}
Is it true that for every  decreasing sequence $(x_n)\to 0$ there is a restricted analytic functions $f$, or just an o-minimal  function $f$ so that  $\la\R_{vec},f,(x_n)\ra$ is d-minimal?
\end{question+}
\begin{rmk}
Of course, we could replace d-minimality by any of the properties described in \cite{Mil1}. That are noiseless, the interior or isolated point property, the interior or lebesgue-null property, defining the projective hierarchy $\ldots$
\end{rmk}
Actually these questions are related to the more general question:
\begin{question+}
Let \cal R be any real closed field expanding $\la\R,<\ra$. Let $P\subseteq \R$ be a sequence set. At which conditions $\la\Cal R,P\ra$ is tame ?
\end{question+}
\medskip
\par We finish by asking a question around a possible generalization of Section \ref{sec lin} to non archimedian settings. 
\begin{question+}\label{Q non arch}
Let $\Cal R=\la R,<,+, \cdot\ra$ be a non archimedian semibounded structure  and $P\subseteq R$ a set of dimension $0$ so that $\WR$ has (DP),  is definably complete and $\Cal R$ defines $(x\mapsto \lambda x)_{(0,\alpha)}$ but does not define $(x\mapsto \lambda x)_{(0,\beta)}$ for every $\beta \gg \alpha$. Is it possible to  define  $(x\mapsto \lambda x)_{(0,\beta)}$ for some $\beta\gg \alpha$ but not to define $(x\mapsto \lambda x)$? 
\end{question+}
\begin{rmk}
Assuming that there is a d-minimal  expansion $\WR=\la\cal R,P\ra^\#$ of $\R_{gp}$ that defines $x\mapsto \lambda x$ (and such that $\lambda \notin \Lambda(\Cal R)$), we may easily define a structure that answers Question \ref{Q non arch}.  By (DPI) in $\WR$, there is $\{X_t:\, t\in S\}$ a definable family of restrictions of $x\mapsto \lambda x$ to some domain. Let $Y$ be the set of endpoints of
 $\{\pi(X_t):\; t\in S\}$ and $Z=\Gamma(x\mapsto \lambda x)_{\res Y}$. 
 Let $Z_{\omega}=\big\{\{\alpha\}\times (Z\cap (0,\alpha)\times (0,\lambda\alpha)):\; \alpha \in \R\big\}$. Let $\cal M$ be a non standard model of $\WR^\#$ and let $\cal R'=\la R,<,+,\ldots\ra$ be the non standard model of $\Cal R$ contained in it. Let $Z_{\omega}^*$ be the interpretation of $Z_{\omega}$ in \cal M and let $\alpha \gg 1$ be a short element of $R$. It is then easy to see that $\la \Cal R', Z^*_{\omega,\alpha}\ra$ has all the desired properties (as a reduct of \cal M), defines $(x\mapsto \lambda x)_{\res (0,\alpha)}$  but does not define the total $x\mapsto \lambda x$. 
\end{rmk}

\end{document}